\theoremstyle{definition}
\theoremstyle{definition}\newtheorem{theorem}{Theorem}[section]
\theoremstyle{plain}\newtheorem{lemma}[theorem]{Lemma}
\theoremstyle{plain}
\theoremstyle{plain}
\theoremstyle{definition}\newtheorem{remark}{Remark}[section]
\newcommand{\R}{\mathbb{R}}
\newcommand{\ii}{\mathrm{i}}
\newcommand{\e}{\mathrm{e}}
\newcommand{\diver}{\mathrm{div}}
\newcommand{\dd}{\mathrm{~d}}
\newcommand{\w}{\widehat}
\newcommand{\mr}{\mathbb{R}}
\newcommand{\lef}{\left\|}
\newcommand{\rig}{\right\|}
\newcommand{\define}{\stackrel{\mathrm{def}}{=}}
\numberwithin{equation}{section}
\begin{document}
	\title{Large time behavior of  solutions to the 2D damped wave-type magnetohydrodynamic equations}

\author{Yaowei Xie, Huan Yu}


	\author{Yaowei Xie\footnote{School of Mathematical Sciences, Capital Normal University, Beijing, 100048, P.R.China. Email: mathxyw@163.com},~~~\,\,\,\,Huan Yu\footnote{School of Applied Science, Beijing Information Science and Technology University, Beijing, 100192, P.R.China.  Email: huanyu@bistu.edu.cn}}
\date{}
\maketitle

\begin{abstract}
In this paper, we are concerned with  the 2D damped wave-type magnetohydrodynamic system (abbreviated as MHD-wave system). The purpose of  this paper is to study the large time  behavior of  solutions to  the MHD-wave system, espesically to investigate the influence of the bad term  $\gamma \partial_{tt}b$  on the large time behavior. Rates of decay  are obtained for both the solutions  and higher derivatives in different Sobolev spaces with  explicit rates of $\gamma$, which shows that  the decay rates  closely align with that of the MHD system under the same norm, for any fixed $\gamma>0$. In this sense, these decay  rates are optimal.
  \end{abstract}
	\noindent {\bf MSC(2020):} 35Q35, 35B40, 76D03. \\[2mm]
	{\bf Keywords:}  MHD-wave system; MHD equations; well-posedness; decay.
	\section{Introduction}
	\subsection{Background and motivation}
	This paper examines the initial-value problem for  the 2D damped wave-type magnetohydrodynamic system (or simply MHD-wave system)
\begin{align}\label{mhdwave}
	\left\{\begin{array}{l}
		\partial_t u-\Delta u+u \cdot \nabla u+\nabla p=b\cdot \nabla b,~ x\in\mathbb{R}^2,t>0 \\[2mm]
	\gamma \partial_{tt}b+	\partial_t b-\Delta b+u\cdot \nabla b=b\cdot\nabla u, \\[2mm]
		\nabla\cdot u=\nabla \cdot b=0, \\[2mm]
		u(x, 0)=u_0(x),\,\, b(x, 0)=b_0(x),\,\, (\partial_t b)(x,0)=a_0(x),
	\end{array}\right.
\end{align}
where $u=u(x, t)$ denotes the  velocity field of the fluid, $b=b(x, t)$ the magnetic field and $p=p(x, t)$  the scalar pressure.  $\gamma>0$ is a real parameter.

The MHD-wave system  is formally derived from Maxwell's equations of electromagnetism by retaining the usually ignored small term involving the product of permittivity and magnetic permeability (see, e.g. \cite{wu-2022-wave,wave-2021-jde-fouriersobolev}).
When $\gamma=0$, the term $\gamma \partial_{tt}b$ disappears, system \eqref{mhdwave} reduces to the 2D MHD equations. In recent years, 
there have been substantial  developments on various fundamental
issues   concerning MHD systems (see e.g. \cite{abidi-2017-GlobalSolution3D,A,jiang-2021-decay,xie-2024-zamp,jiu-2015-MHD,lin-2015-GlobalSmallSolutions,lin-2014-GlobalSmallSolutions,panGlobalClassicalSolutions2018,ren2014global,schonbek-1996-optimal,sermange-1983,tan-2018-decay-bound,TYZ,WU1,WU2,zhang-2020-GlobalWellPosedness2D,xu-2015-GlobalSmallSolutions,YS}). However, among the existing literature regarding system \eqref{mhdwave}, very few has addressed the rigorous mathematical studies. 
Recently, the small data global well-posedness and singular limit  of system \eqref{mhdwave}
 in Fourier--Sobolev spaces have been obtained
 in \cite{wave-2021-jde-fouriersobolev}. 
Ji-Wu-Xu \cite{wu-2022-wave} proved the global well-posedness of system \eqref{mhdwave} in a critical Sobolev setting when $\gamma$  and the size of the initial data satisfy a suitable constraint. Moreover, they proved that the solution  converges to that
of the corresponding MHD system with an explicit rate.
Sun-Wang \cite{sun-2023-wave} considered the more dissipative fractional-order MHD-wave system and  obtained  global existence and uniqueness of solutions.

In view of the available literature, we find that there are not results on the asymptotic behavior of  solutions to the MHD-wave system. Indeed, in contrast to the standard 2D MHD equations, the equation
of $b$ in \eqref{mhdwave} is hyperbolic, and the extra term $\gamma \partial_{tt} b$ is mathematically a bad term in
the sense of energy estimates.
The goal of this paper is to study the large time  behavior of  solutions to  the MHD-wave system, espesically to investigate the influence of the bad term  $\gamma \partial_{tt}b$  on the large time behavior.


In the last subsection, we introduce some notations to be used.
\begin{itemize}
	\item For any $m>0$ and $1\leq c<2$, we set 
	\begin{align*}
		X^m(\mathbb{R}^2)&=\left(H^m(\mathbb{R}^2),H^{m+1}(\mr^2),H^m(\mr^2)\right).\\
		X^{m,c}(\mathbb{R}^2)&=\left(H^m(\mathbb{R}^2)\cap L^c(\mathbb{R}^2),H^{m+1}(\mathbb{R}^2)\cap L^c(\mr^2),H^m(\mathbb{R}^2)\cap L^c(\mr^2)\right).
	\end{align*}
	\item Let $X_1, X_2$ be Banach spaces,  \begin{align*}
		\| \cdot \|_{X_1\cap X_2} \define \|\cdot\|_{X_1}+\|\cdot\|_{X_2}.
	\end{align*}
	For arbitrary  functions $f$ and $g$,
	\begin{align*}
		\|f,g\|_{X_1}\define \|f\|_{X_1}+\|g\|_{X_1}.
	\end{align*}
	\item For $s\in\mathbb{R}$, $\Lambda^s f$ is defined as follows
	\begin{align*}
		\mathscr{F}\left(\Lambda^s f\right)(\xi)=|\xi|^s\mathscr{F} \left(f\right)(\xi),
	\end{align*}
	where the $\mathscr{F}$ is the Fourier transform, defined as follows
	\begin{align*}
		\mathscr{F}(f)(\xi)=\dfrac{1}{2\pi}\int_{\mr^2}\e^{-\ii x\cdot\xi}f(x)\dd x.
	\end{align*}
\end{itemize}

 \subsection{Main results}

We now proceed to  state our  main results.
Our first result is concerned with the global well-posedness of system \eqref{mhdwave} and the decay estimates of global strong solution in $L^q$-norm.
\begin{theorem}\label{thm1}
	 Let $m>0$ and $\gamma>0$. Assume that the  initial data $(u_0,b_0,a_0)\in X^m(\mathbb{R}^2)$ satisfying $\nabla\cdot u_0=\nabla \cdot b_0=\nabla\cdot a_0=0.$ If  there exists a suitable constant $\varepsilon> 0$, such that
\begin{align}\label{1.1}
	\left\|(u_0,b_0,a_0)\right\|_{X^m(\mr^2)}\leq \varepsilon,
\end{align}
then there exists a unique global solution $(u,b)$ to system \eqref{mhdwave} satisfying, for any $t>0$,
\begin{align}\nonumber
&\|u(t)\|_{H^m}^2+\| b(t)\|_{H^m}^2+2\gamma^2  \|\partial_t  b(t)\|_{H^m}^2+2\gamma\|\nabla b(t)\|_{H^m}^2\\\nonumber
&~~~~~~~+\int_0^t \|\nabla u(\tau)\|_{H^m}^2+\|\nabla b(\tau)\|_{H^m}^2+\gamma\|\partial_\tau b(\tau)\|_{H^m}^2 \dd\tau \\\label{th1-1}
&\leq C\left(1+\gamma^{\frac{2m}{2m+2}}\right)\left\| (u_0,b_0,a_0)\right\|_{X^m(\mr^2)}^2.
\end{align}
Furthermore, the global solution obeys the following $L^q$ decay estimates:
	\begin{align}\label{lqdecay}
	\|u(t)\|_{L^q}+	\|b(t)\|_{L^q}\leq C\left(1+\gamma\right)t^{\frac{1}{q}-\frac{1}{2}}\left\| (u_0,b_0,a_0)\right\|_{X^m(\mr^2)} ,
\end{align} for any  $q\geq 2$ and $t>0$.
\end{theorem}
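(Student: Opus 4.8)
The plan is to combine an energy method for the global well-posedness with a gradient-level decay estimate that feeds into a Gagliardo--Nirenberg interpolation. First I would establish local existence by recasting the hyperbolic magnetic equation as a first-order-in-time system for the pair $(b,\partial_t b)$ and running a standard fixed-point (or Galerkin) scheme in $C([0,T];X^m)$; the usual continuation criterion then reduces the global statement to the a priori bound \eqref{th1-1}.

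For the a priori estimate the decisive choice is the multiplier for the magnetic equation. Applying $\Lambda^s$ for $0\le s\le m$ and testing the $u$-equation against $\Lambda^{2s}u$ and the $b$-equation against $\Lambda^{2s}\bigl(b+2\gamma\,\partial_t b\bigr)$, then summing, produces exactly the energy functional and dissipation of \eqref{th1-1}: the piece $2\gamma\,\partial_t b$ generates $\gamma^2\frac{\dd}{\dd t}\|\Lambda^s\partial_t b\|_{L^2}^2$ and $\gamma\frac{\dd}{\dd t}\|\Lambda^s\nabla b\|_{L^2}^2$ together with the damping $\gamma\|\Lambda^s\partial_t b\|_{L^2}^2$, while testing against $b$ supplies the dissipation $\|\Lambda^s\nabla b\|_{L^2}^2$. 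The divergence-free conditions give the standard MHD cancellation between $\langle b\cdot\nabla b,u\rangle$ and $\langle b\cdot\nabla u,b\rangle$, and the cross term $\gamma\langle\Lambda^s\partial_t b,\Lambda^s b\rangle$ is lower order and absorbed by Young's inequality into the coercive part of the energy.

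The genuinely new contributions are the transport and coupling nonlinearities tested against $2\gamma\,\Lambda^{2s}\partial_t b$, for which no structural cancellation is available. These must be absorbed into the only available $\partial_t b$-dissipation, namely $\gamma\|\Lambda^s\partial_t b\|_{L^2}^2$; estimating them by 2D Sobolev embeddings and interpolation, and exploiting the enhanced regularity $b_0\in H^{m+1}$ built into $X^m$, is what forces the explicit powers of $\gamma$ and, after summing over $s$ and interpolating between $L^2$ and $H^{m+1}$, yields the factor $1+\gamma^{2m/(2m+2)}$. Closing the resulting nonlinear inequality under the smallness \eqref{1.1} and invoking continuation then gives global existence together with \eqref{th1-1}. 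For the decay \eqref{lqdecay} I would not attempt $L^2$ decay (unavailable without $L^1$ data) but instead show that a gradient-level energy decays like $t^{-1}$. Running the same energy identity one spatial derivative higher and absorbing the nonlinearities for small data yields $\frac{\dd}{\dd t}\mathcal{E}_1+\mathcal{D}_1\le 0$, where $\mathcal{E}_1$ controls $\|\nabla u\|_{L^2}^2+\|\nabla b\|_{L^2}^2$ (with the $\gamma$-weighted $\partial_t b$ and $\Delta b$ terms) and $\mathcal{D}_1$ controls $\|\Delta u\|_{L^2}^2+\|\Delta b\|_{L^2}^2+\gamma\|\nabla\partial_t b\|_{L^2}^2$. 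The 2D interpolation $\|\nabla f\|_{L^2}^2\le\|f\|_{L^2}\|\Delta f\|_{L^2}$, together with the uniform bound on $\|u\|_{L^2},\|b\|_{L^2}$ from \eqref{th1-1}, gives $\mathcal{E}_1^2\lesssim\mathcal{D}_1$, hence $\frac{\dd}{\dd t}\mathcal{E}_1+c\,\mathcal{E}_1^2\le 0$ and $\mathcal{E}_1(t)\lesssim t^{-1}$. Finally the Gagliardo--Nirenberg inequality $\|f\|_{L^q}\lesssim\|f\|_{L^2}^{2/q}\|\nabla f\|_{L^2}^{1-2/q}$ applied to $u$ and $b$ converts this into $\|u\|_{L^q}+\|b\|_{L^q}\lesssim t^{1/q-1/2}$, the $\gamma$-dependence of $\mathcal{E}_1$ producing the prefactor $1+\gamma$.

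The main obstacle is the hyperbolic, non-dissipative character of the term $\gamma\partial_{tt}b$: unlike the parabolic magnetic equation of the $\gamma=0$ MHD system, it supplies only the weak damping $\gamma\|\partial_t b\|_{L^2}^2$, so every nonlinear term paired with $\partial_t b$ must be absorbed there, and one must separately secure enough positive-time regularity for $b$ (from the integrated dissipation in \eqref{th1-1}) to run the gradient-level ODE argument when $m$ is small. Keeping all constants explicit and uniform in $\gamma$ — so that the estimates degenerate correctly to the MHD case as $\gamma\to 0$ yet remain finite for each fixed $\gamma>0$ — is the delicate bookkeeping that drives both the fractional $\gamma$-power in \eqref{th1-1} and the $(1+\gamma)$ factor in \eqref{lqdecay}.
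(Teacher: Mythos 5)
Your well-posedness half is essentially the paper's own argument: applying $\Lambda^m$ and testing the magnetic equation against $\Lambda^{2m}\left(b+2\gamma\partial_t b\right)$ is exactly the paper's combination \eqref{3.3} (it tests separately against $\Lambda^m b$ and $\partial_t\Lambda^m b$ and adds $2\gamma$ times the latter), the cross term $2\gamma\int\partial_t\Lambda^m b\cdot\Lambda^m b\dd x$ is absorbed by Young's inequality in the same way, and the fractional power $\gamma^{\frac{2m}{2m+2}}$ arises from the same Gagliardo--Nirenberg splitting of the terms paired with $\partial_t\Lambda^m b$, closed by a bootstrapping argument.

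The decay half, however, has a genuine gap, and it is located exactly where you flagged it but then waved it away. Your ODE argument needs the gradient-level energy $\mathcal{E}_1$, which contains $\gamma\|\Delta b\|_{L^2}^2$ and $\gamma^2\|\nabla\partial_t b\|_{L^2}^2$, to be finite at some positive time $t_0$. For $m<1$ this quantity is generically infinite for \emph{all} $t>0$: the magnetic equation is a damped wave equation, not a parabolic one, and it has no smoothing effect. Quantitatively, the a priori bound \eqref{th1-1} only yields $b(t)\in H^{m+1}$ and $\partial_t b(t)\in H^m$ (the dissipation integral gives no more), and the kernel bounds of Lemma \ref{kernel prop} confirm that nothing better can hold: on the high-frequency region $S_1$ the symbols obey $|\w{K_0}|,|\w{K_1}|\leq C\e^{-\frac{1}{8\gamma}t}$ with \emph{no} negative power of $|\xi|$ for $K_0$, and at most one derivative of gain (at the cost of $\gamma^{-1/2}$) for $K_1$ via \eqref{fren-2}. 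So the linear part $K_0(t)b_0$ of $b$ retains precisely the $H^{m+1}$ regularity of $b_0$, and for $m+1<2$ one has $\Delta b(t)\notin L^2$ no matter how long one waits. Hence $\mathcal{E}_1\equiv\infty$, the differential inequality $\frac{\dd}{\dd t}\mathcal{E}_1+c\,\mathcal{E}_1^2\leq 0$ cannot even be initialized, and your method proves the $L^q$ decay at best for $m\geq 1$, whereas the theorem asserts it for all $m>0$.

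This is not a technical loose end but the central obstruction the paper is built around: the authors state explicitly that the direct energy method for decay fails because of $\gamma\partial_{tt}b$, and they replace it by the Duhamel representations \eqref{u integral}--\eqref{b integral}, a high/low frequency splitting ($S_1$, $S_2$), the negative-power trick \eqref{fren-2} to offset the derivative in $\nabla\cdot(u\otimes b)$ at high frequencies, Lemma \ref{expintegral} to recover positive powers of $\gamma$ from the time integral of $\e^{-\frac{1}{8\gamma}(t-\tau)}$, and a continuity argument on the decay ansatz \eqref{6.1} itself (assume the decay with constant $C_0$, re-derive it with $C_0/2$). If you want to salvage an energy-type route you would have to run it at the regularity level actually available ($\dot H^{m}$ for $u$, $\dot H^{m+1}$ for $b$, fractional $m$), at which point the interpolation $\|\nabla f\|_{L^2}^2\leq\|f\|_{L^2}\|\Delta f\|_{L^2}$ that closes your ODE is no longer at your disposal; the paper's fixed-point-on-the-decay-rate scheme is precisely the workaround.
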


Given additionally the $L^c$-norm, with $1\leq c<2$ smallness assumption for the initial data, we are able to
obtain higher-order derivative decay estimates in $L^2$ framework.  Specifically, we have the following theorem.

\begin{theorem}\label{thm2}
Assume that the divergence-free initial data $(u_0,b_0,a_0)\in X^{m,c}(\mr^2)$ with $m>0, 1\leq c<2.$
If there exists a positive constant $\varepsilon$, such that 
\begin{align*}
		\left\|(u_0,b_0,a_0)\right\|_{X^{m,c}(\mr^2)}\leq \varepsilon,
\end{align*}
then the  global solution $(u,b)$ of  system \eqref{mhdwave} established in Theorem \ref{thm1} satisfies, for any $t>0$,
\begin{align}\label{th2-1}
		\|\Lambda^\beta u(t)\|_{L^2}+\|\Lambda^\beta b(t)\|_{L^2}\leq C \left(1+\gamma^{1+\frac{1}{c}-\frac{1-\beta}{2}}\right)	 (1+t)^{\frac{1-\beta}{2}-\frac{1}{c}}\|(u_0,b_0,a_0)\|_{X^{m,c}(\mr^2)},
\end{align}
with $0\leq \beta \leq m$.
In addition, for any $t > 0$ and $0 \leq \varrho < m + 1$,
\begin{align}\label{th2-2}
	\|\Lambda^\varrho b(t)\|_{L^2}\leq C\left(1+\gamma^{1+\frac{1}{c}-\frac{1-\varrho}{2}}\right)(1+t)^{\frac{1-\varrho}{2}-\frac{1}{c}}\|(u_0,b_0,a_0)\|_{X^{m,c}(\mr^2)} .
\end{align}
\end{theorem}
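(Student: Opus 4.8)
The plan is to combine the energy--dissipation structure behind Theorem \ref{thm1} with Schonbek's Fourier splitting method, closing the argument by a bootstrap and tracking the powers of $\gamma$ at every step. For each differentiation order $0\le\beta\le m$ I would work with a $\gamma$-weighted energy of the form
\begin{align*}
	\mathcal{E}_\beta(t)=\|\Lambda^\beta u\|_{L^2}^2+\|\Lambda^\beta b\|_{L^2}^2+2\gamma^2\|\Lambda^\beta\partial_t b\|_{L^2}^2+2\gamma\|\Lambda^\beta\nabla b\|_{L^2}^2+2\eta\gamma\langle\Lambda^\beta\partial_t b,\Lambda^\beta b\rangle,
\end{align*}
with a small fixed $\eta>0$, produced by testing the $u$-equation against $\Lambda^{2\beta}u$ and the $b$-equation against $\Lambda^{2\beta}(2\gamma\,\partial_t b+\eta\, b)$, in the spirit of the derivation of \eqref{th1-1}. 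For small $\eta$ the cross term is dominated by $\eta\gamma^2\|\Lambda^\beta\partial_t b\|_{L^2}^2+\eta\|\Lambda^\beta b\|_{L^2}^2$, so $\mathcal{E}_\beta$ is coercive and comparable to $\|\Lambda^\beta u\|_{L^2}^2+\|\Lambda^\beta b\|_{L^2}^2+\gamma^2\|\Lambda^\beta\partial_t b\|_{L^2}^2+\gamma\|\Lambda^{\beta+1}b\|_{L^2}^2$, and it obeys
\begin{align*}
	\frac{d}{dt}\mathcal{E}_\beta+\mathcal{D}_\beta\le\mathcal{N}_\beta,\qquad \mathcal{D}_\beta\gtrsim\|\Lambda^{\beta+1}u\|_{L^2}^2+\eta\|\Lambda^{\beta+1}b\|_{L^2}^2+\gamma\|\Lambda^\beta\partial_t b\|_{L^2}^2,
\end{align*}
where $\mathcal{N}_\beta$ gathers the nonlinear contributions of $b\cdot\nabla b-u\cdot\nabla u$ and $b\cdot\nabla u-u\cdot\nabla b$, all of divergence form.

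Next I would run the Fourier splitting. With the time-dependent ball $B(t)=\{\xi\in\mr^2:|\xi|^2\le \kappa/(1+t)\}$ and $\kappa$ large, the frequency-weighted part of $\mathcal{D}_\beta$ dominates $\tfrac{\kappa}{1+t}\mathcal{E}_\beta$ up to the low-frequency remainder, while the damping $\gamma\|\Lambda^\beta\partial_t b\|_{L^2}^2$ controls the $\partial_t b$-energy $2\gamma^2\|\Lambda^\beta\partial_t b\|_{L^2}^2$ at rate $\sim\gamma^{-1}$. Once $1+t\gtrsim\gamma$ these combine into
\begin{align*}
	\frac{d}{dt}\mathcal{E}_\beta+\frac{\kappa}{1+t}\mathcal{E}_\beta\lesssim\frac{1}{1+t}\int_{B(t)}\Big(|\w u|^2+|\w b|^2+\gamma^2|\w{\partial_t b}|^2\Big)\dd\xi+\mathcal{N}_\beta,
\end{align*}
the short-time range $t\lesssim\gamma$ being absorbed into the uniform bound \eqref{th1-1}. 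The heart of the matter is the low-frequency integral, estimated from the Duhamel representation of each equation. The linear contribution of the data is controlled by Hausdorff--Young, $\int_{B(t)}|\w{u_0}|^2\dd\xi\lesssim\|u_0\|_{L^c}^2|B(t)|^{\,2/c-1}\sim\|u_0\|_{L^c}^2(1+t)^{1-2/c}$, i.e.\ exactly the heat rate. The genuinely new ingredient is the magnetic propagator: the roots of $\gamma\lambda^2+\lambda+|\xi|^2=0$ split on $B(t)$ into a slow mode $\lambda_+\sim-|\xi|^2$ and a fast mode $\lambda_-\sim-\gamma^{-1}$, so on low frequencies the damped--wave evolution of $b$ coincides with the heat evolution up to an $O(\e^{-t/\gamma})$ correction; I would show that the fast mode and the auxiliary datum $a_0$ enter only through a $\gamma$-weighted, exponentially small remainder, while the slow mode reproduces the $L^c$-to-$\dot H^\beta$ heat decay. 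The nonlinear low-frequency piece, by the divergence structure, carries an extra factor $|\xi|$ and is bounded through the $L^q$ decay \eqref{lqdecay} together with the uniform bound \eqref{th1-1}.

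Finally I would integrate against $(1+t)^{\kappa}$ and bootstrap. Assuming the target rate for $\|\Lambda^\beta(u,b)\|_{L^2}$, the terms in $\mathcal{N}_\beta$ become time-integrable and preserve the rate, first for $\beta=0$, where one recovers $\mathcal{E}_0(t)\lesssim(1+t)^{1-2/c}$, and then inductively for $0<\beta\le m$, each derivative sharpening the rate by $(1+t)^{-1/2}$ and landing on $(1+t)^{(1-\beta)/2-1/c}$; this gives \eqref{th2-1}. For \eqref{th2-2} I would use that the magnetic dissipation carries one more derivative than the magnetic energy, so the slow (heat-like) mode of the $b$-propagator smooths while the high frequencies decay like $\e^{-t/(2\gamma)}$; interpolating the decay of $\|\Lambda^m b\|_{L^2}$ against the $H^{m+1}$-control of $b$ then pushes the rate $(1+t)^{(1-\varrho)/2-1/c}$ up to, but not including, $\varrho=m+1$. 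I expect the principal obstacle to be the uniform-in-$\gamma$ analysis of the damped--wave propagator on $B(t)$: one must quantify its transition from the oscillatory regime $|\xi|^2>1/(4\gamma)$ to the heat-like regime $|\xi|^2<1/(4\gamma)$, absorb the extra datum $a_0$, and extract the precise exponent $1+\tfrac1c-\tfrac{1-\beta}{2}$ of $\gamma$ from the interplay of the threshold $|\xi|^2\sim\gamma^{-1}$, the time threshold $t\sim\gamma$, the $\gamma$-weights in $\mathcal{E}_\beta$, and the radius of $B(t)$.
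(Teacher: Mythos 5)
Your Fourier--splitting strategy for \eqref{th2-1} is a genuinely different route from the paper's: the paper never returns to the energy inequality of Lemma \ref{lem1} to prove decay, but instead runs a continuity argument directly on the Duhamel representations \eqref{u integral}--\eqref{b integral}, using the kernel bounds of Lemma \ref{kernel prop} on $S_1$ and $S_2$ and the exponential-integral Lemma \ref{expintegral} to extract the powers of $\gamma$. For $0\le\beta\le m$ your plan is structurally plausible: the modified energy is the one already used in Lemma \ref{lem1}, for $1+t\gtrsim\gamma$ the ball $B(t)$ lies inside the heat-like region $S_2$, and the divergence structure supplies the extra factor $|\xi|$ at low frequency. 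However, the theorem's specific exponent $1+\frac1c-\frac{1-\beta}{2}$ of $\gamma$ is not derived anywhere in your sketch; you explicitly defer the point (the uniform-in-$\gamma$ propagator analysis near the thresholds $|\xi|^2\sim\gamma^{-1}$ and $t\sim\gamma$) that the paper settles quantitatively with Lemma \ref{expintegral} and \eqref{fren-2}. So this half is incomplete rather than wrong.

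The genuine gap is in \eqref{th2-2} for $m<\varrho<m+1$. You propose to interpolate the decay of $\|\Lambda^m b\|_{L^2}$ against the $H^{m+1}$-control of $b$, but this cannot give the stated rate. Writing $\theta=\varrho-m\in(0,1)$, interpolation gives
\begin{align*}
\|\Lambda^\varrho b\|_{L^2}\le \|\Lambda^m b\|_{L^2}^{1-\theta}\,\|\Lambda^{m+1}b\|_{L^2}^{\theta},
\end{align*}
and since \eqref{th1-1} only yields boundedness $\|\Lambda^{m+1}b\|_{L^2}\lesssim \gamma^{-1/2}$ with no decay, the best one obtains is
\begin{align*}
\|\Lambda^\varrho b\|_{L^2}\lesssim \gamma^{-\theta/2}(1+t)^{\left(\frac{1-m}{2}-\frac1c\right)(1-\theta)},
\end{align*}
whose time exponent exceeds the target $\frac{1-\varrho}{2}-\frac1c$ by $\theta\left(\frac m2+\frac1c\right)>0$, and which degenerates to no decay at all as $\varrho\to m+1$; the theorem instead asserts that the rate keeps \emph{improving} up to $\varrho=m+1$. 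Nor can you repair this by running your energy/Fourier-splitting scheme at order $\varrho$: the nonlinear terms would then require control of $\Lambda^\varrho u$ with $\varrho>m$, which is unavailable since $u_0\in H^m$ only. This range genuinely needs the Duhamel formula for $\Lambda^\varrho b$, which is how the paper proceeds: on $S_1$ one pays powers of $\gamma$ through \eqref{fren-2} and the factor $\e^{-\frac{1}{8\gamma}(t-\tau)}$; on $S_2$ the heat-like kernel absorbs the extra $\varrho-m$ derivatives (the factor $(t-\tau)^{-\frac{\varrho-m}{2}-\frac1k}$); and in the nonlinearity the derivatives are distributed so that at most $m$ land on $b$ (then interpolated against the already-proved rates \eqref{th2-1}), while the excess on $u$ is absorbed through the time-integrated dissipation $\int_0^t\|\Lambda^{m+1}u(\tau)\|_{L^2}^2\dd\tau$ from \eqref{th1-1} combined with Gagliardo--Nirenberg. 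Some mechanism of this kind is required and is absent from your proposal.
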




\begin{remark}
The decay estimates of solutions to system \eqref{mhdwave} are obtained  with an explicit rate of $\gamma$, which shows that  the decay rates  closely align with that of the MHD system under the same norm, for any fixed $\gamma>0$. In this sense, the decay  estimates in Theorems \ref{thm1} and \ref{thm2} are optimal.
\end{remark}

\subsection{Challenges and methodology}

Due to the presence of the extra term $\gamma \partial_{tt}b $ in the MHD-wave equations, the direct energy method applied to 2D MHD equations no longer works for the MHD-wave equations. Instead, we make use of the following integral representations of the  MHD-wave system, which is first derived in \cite{wu-2022-wave}.
\begin{equation}\label{u-integral}
	\begin{split}
		u(x,t)&=\e^{t\Delta}u_0+\int_0^t \e^{(t-\tau)\Delta}\mathbb{P}\left(b\cdot\nabla b-u\cdot\nabla u\right)(\tau)\dd \tau.
	\end{split}
\end{equation}
\begin{equation}\label{b-integral}\begin{split}
		b(x,t)&=\left(K_0(t,D)+\frac{1}{2}K_1(t,D)\right)b_0+\gamma K_1(D)a_0\\&~~~~~~~~~+\int_{0}^{t}K_1(t-\tau,D)\left(b\cdot\nabla u-u\cdot\nabla b\right)(\tau)\dd \tau.\end{split}
\end{equation}

The symbols $\w{K_0}(\xi,t)$,  $\w{K_1}(\xi,t)$ of the above operators $K_0(t,D)$ and $K_1(t,D)$ behave differently and obey different upper bounds (see Lemma \ref{integral} in Section 2 for details). Indeed,  $K_0(t,D)$ and $K_1(t,D)$ reflects the effect of dissipation in the low frequency part, while  $K_0(t,D)$ and $K_1(t,D)$  demonstrates the effect of damping in the high frequency part.

Since $K_0(t,D)$ and $K_1(t,D)$  demonstrates the effect of damping in the high frequency part,  the higher-order derivatives generated by the convection term are difficult to be handled.
Hence, the main difficulty  in  dealing with the long-time behavior problem lies in the estimation of the nonlinear term at high-frequency of the magnetic equations. To overcome the difficulty, we use \eqref{fren-2}, which includes a negative power of frequency $|\xi|^{-\vartheta}$ and represents a negative derivative $\left|\nabla\right|^{-\vartheta}$ in physical space, allowing us to offset the higher derivative information from the convection term. However, it will gain  a  negative power $-\frac{\vartheta}{2}$ of $\gamma$, tending to infinity as $\gamma\to 0$. In order to show that the long-time behavior of the solution to system \eqref{mhdwave}  is coincide with that to 2D MHD equtaions, it is required to discover additional positive powers of $\gamma$ from other perspectives, which is summarized in Lemma \ref{expintegral}.

In addition, to show  decay estimate of the higher  derivatives of $b$ in $H^\varrho (m<\varrho<m+1)$,  we need to find ways and means to put  more order  derivatives on $b$, since we do not have information about the higher order derivatives of $u$.


Now, we present a detailed explanation of the difficulties casused by the nonlinear term at high-frequency of $b$ equation.
All decay estimates in this paper are justified by using the continuity method. We will explain how to address the challenging aspects in different situations in three parts:

{\bf 1. $L^q$ decay.}

First, according to the difference between high and low frequencies, the nonlinear term is  divided into two parts,
\begin{align}\nonumber
	&\left\|\int_{0}^{t}K_1(t-\tau)\left(b\cdot\nabla u-u\cdot\nabla b\right)(\tau)\dd \tau\right\|_{L^q(\mr^2)}\\\label{b-nonlinear-lq}
    &\leq \left\|\int_{0}^{t}\w{K_1(t-\tau)\nabla\cdot\left(u\otimes b\right)}\dd \tau\right\|_{L^{q'}(S_1)}+\left\|\int_{0}^{t}\w{K_1(t-\tau)\nabla\cdot\left(u\otimes b\right)}\dd \tau\right\|_{L^{q'}(S_2)}
\end{align}
According to the above analysis, the most difficult term is the first term on the right side of \eqref{b-nonlinear-lq}. Using \eqref{fren-2} with $\vartheta=1$ in Lemma \ref{kernel prop} yields
\begin{align*}
	&\left\|\int_{0}^{t}\w{K_1(t-\tau)\nabla\cdot\left(u\otimes b\right)}\dd \tau\right\|_{L^{q'}(S_1)}\\
	&\leq C \int_{0}^t \lef \gamma^{-\frac{1}{2}}|\xi|^{-1-m}\e^{-\frac{1}{8\gamma}(t-\tau)} \rig_{L^{2}(\{\xi;4\gamma |\xi|^2\geq \frac{3}{4}\})} \left\|\Lambda^{m}\nabla\cdot\left(u(\tau)\otimes b(\tau)\right)\right\|_{L^{\frac{2q}{2+q}}(\mr^2)} \dd\tau\\
	&\leq C\gamma^{\frac{m-1}{2}}\int_{0}^{t}\e^{-\frac{1}{8\gamma}(t-\tau)}\left(\|\nabla\Lambda^{m}u(\tau)\|_{L^{2}}\|b(\tau)\|_{L^{q}}+\|\nabla\Lambda^{m}b(\tau)\|_{L^{2}}\|u(\tau)\|_{L^q}\right)\dd\tau.
\end{align*}
Substituting the assumption of the continuity argument
\begin{align*}
	\|u(t)\|_{L^q}+	\|b(t)\|_{L^q}\leq C_0t^{\frac{1}{q}-\frac{1}{2}}\left(1+\gamma\right)\left\| (u_0,b_0,a_0)\right\|_{X^m(\mr^2)},
\end{align*}
and then using a basic calculation referring to \eqref{p-3} in Lemma \ref{expintegral}
\begin{align*}
	\int_{0}^t\e^{-\frac{1}{4\gamma}(t-\tau)}\tau^{\frac{2}{q}-1}\dd\tau\leq C\gamma t^{\frac{2}{q}-1},
\end{align*}
it is deduced that
\begin{align*}
	&\left\|\int_{0}^{t}\w{K_1(t-\tau)\nabla\cdot\left(u\otimes b\right)}\dd \tau\right\|_{L^{q'}(S_1)}\\
	&\leq C\left(1+\gamma\right)\left(\gamma^{\frac{m}{2}}+\gamma^{\frac{m}{2}+\frac{m}{2m+2}}\right)\left(C_0\left\| (u_0,b_0,a_0)\right\|_{X^m(\mr^2)} \right)^2t^{\frac{1}{q}-\frac{1}{2}}.
\end{align*}

Regarding the decay estimate of high-order derivatives of $b$, since the nonlinear part of the equation for $b$ is coupled with $u$, it is more troublesome to  prove $H^\varrho (0\leq \varrho<m+1)$ decay  of $b$ directly. Thus, we divide it into two cases: $0\leq \varrho\leq m$ and $m<\varrho<m+1$.

{\bf 2. $H^\beta$ decay for $0\leq \beta \leq m$.}

By using \eqref{fren-2} with $\vartheta=1$ in Lemma \ref{kernel prop}, for the high frequency part of the nonlinear term, we have
\begin{align*}
&\left\|	\int_{0}^{t} \w{K_1(t-\tau)\Lambda^\beta\nabla\cdot\left(u\otimes b\right)}\dd \tau\right\|_{L^2(S_1)}\\
&\leq C \int_0^t \left\|\gamma^{-\frac{1}{2}}|\xi|^{-1}\e^{-\frac{1}{8\gamma}(t-\tau)}\w{\Lambda^\beta\nabla\cdot\left(u\otimes b\right)}(\xi)\right\|_{L^2(S_1)}\dd \tau\\
&\leq C\gamma^{-\frac{1}{2}}\int_0^t\e^{-\frac{1}{8\gamma}(t-\tau)} \left\|\Lambda^\beta(u(\tau)\otimes b(\tau))\right\|_{L^2(\mr^2)}\dd\tau.
\end{align*}
To offset the term $\gamma^{-\frac{1}{2}}$, we use \eqref{p-2} in Lemma \ref{expintegral}, which states that
\begin{align}\nonumber
	&\int_{0}^{t}\left(\e^{-\frac{1}{8\gamma}(t-\tau)}(1+\tau)^{\frac{1-\beta}{2}-\frac{1}{c}}\right)^\frac{2+2m}{1+2m}\dd \tau \\ \nonumber
	&\leq \begin{cases}
		C(1+t)^{-1}\left(\gamma +\gamma^2\right), &(\frac{1}{c}-\frac{1}{2}+\frac{\beta}{2})\frac{2+2m}{1+2m}=1,\\
		C(1+t)^{(\frac{1-\beta}{2}-\frac{1}{c})\frac{2+2m}{1+2m}}	\gamma , &(\frac{1}{c}-\frac{1}{2}+\frac{\beta}{2})\frac{2+2m}{1+2m}<1,\\
		C(1+t)^{(\frac{1-\beta}{2}-\frac{1}{c})\frac{2+2m}{1+2m}}\left(\gamma +\gamma^{(\frac{1}{c}-\frac{1}{2}+\frac{\beta}{2})\frac{2+2m}{1+2m}}\right),&(\frac{1}{c}-\frac{1}{2}+\frac{\beta}{2})\frac{2+2m}{1+2m}>1.
	\end{cases}
\end{align}
Therefore,
\begin{align*}\nonumber
&\left\|	\int_{0}^{t} \w{K_1(t-\tau)\Lambda^\beta\nabla\cdot\left(u\otimes b\right)}\dd \tau\right\|_{L^2(S_1)}\\
\leq& C\left(1+\gamma^{1+\frac{1}{c}-\frac{1-\beta}{2}}\right)\phi(\gamma)\left(C_0\|(u_0,b_0,a_0)\|_{X^{m,c}(\mr^2)}\right)^2(1+t)^{\frac{1-\beta}{2}-\frac{1}{c}}.
\end{align*}
It is noted that $\phi(\gamma)$ is a polynomial function with nonnegative exponents of $\gamma,$ which enables us to achieve the desired result.

{\bf 3. $H^\varrho$ decay for $m< \varrho <m+1$.}

We can deal with the nonlinear term in a similar way as above,
 when the higher-order  derivative is evaluated at $b.$ 
 However,  we should make full use of dissipative effect, when the higher-order  derivative is evaluated at $u.$  In fact,
  \begin{equation*}
 	\begin{split}
 		&\left\|\int_{0}^{t}K_1(t-\tau)\widehat{\Lambda^\varrho \left(b\cdot\nabla u-u\cdot\nabla b\right)}\dd \tau\right\|_{L^2(S_1)}\\
 		\leq& 	 C\gamma^{-\frac{1}{2}}\int_0^t\e^{-\frac{1}{8\gamma}(t-\tau)} \left\|\Lambda^\varrho(u(\tau)\otimes b(\tau))\right\|_{L^2}\dd\tau 
 	\end{split}
 \end{equation*}
Using the  interpolation inequality and the obtained $\beta$-order decay estimates, we  conclude	\begin{equation*}\begin{split}
		&\|\Lambda^\varrho u(\tau)\|_{L^{i_1}(\mathbb{R}^2)}\|b(\tau)\|_{L^{i_2}(\mathbb{R}^2)}\\ \leq&C\left\| \Lambda^{m+1}u(\tau)\right\|_{L^2}^{\theta_1}\left\| u(\tau)\right\|_{L^2}^{1-\theta_1} \|\Lambda^{1-\frac{2}{i_2}}b (\tau)\|_{L^2}
		\\ \leq&
		C\left(1+\gamma^{1+\frac{1}{c}-\frac{1}{i_2}}\right)\left(1+\gamma^{\frac{1}{2}+\frac{1}{c}}\right)^{1-\theta_1}\left(C_0\|(u_0,b_0,a_0)\|_{X^{m,c}(\mr^2)}  \right)^{2-\theta_1} \\
		&~~~~~~ ~~~~~~~~~~\times(1+\tau)^{\frac{1-\varrho}{2}-\frac{1}{c}} \left\| \Lambda^{m+1}u\right\|_{L^2}^{\theta_1},
\end{split}	\end{equation*}
where
\begin{align*}
	\begin{cases}
		\theta_1=\dfrac{\frac{1}{c}}{\frac{m}{2}+\frac{1}{c}}\\[2mm]
		\frac{1}{i_1}+\frac{1}{i_2}=\frac{1}{2}\\[2mm]
		\varrho-\frac{2}{i_1}=m\theta_1-1(1-\theta_1)
	\end{cases}\Rightarrow \frac{1}{i_2}-\frac{1}{c}+\left(\frac{1}{2}-\frac{1}{c}\right)\left(1-\theta_1\right)=\frac{1-\varrho}{2}-\frac{1}{c}.
\end{align*}
Then, after detailed calculation,
it yields
\begin{align*}\nonumber
	&C\gamma^{-\frac{1}{2}}\int_0^{t}\e^{-\frac{1}{8\gamma}(t-\tau)}\|\Lambda^\varrho u(\tau)\|_{L^{i_1}}\|b(\tau)\|_{L^{i_2}}\dd \tau\\ \leq & C\left(C_0\|(u_0,b_0,a_0)\|_{X^{m,c}(\R^2)}\right)^2\left(1+\gamma^{1+\frac{1}{c}-\frac{1-\varrho}{2}}\right)\phi_2(\gamma)(1+t)^{\frac{1-\varrho}{2}-\frac{1}{c}},
\end{align*}where $\phi_2(\gamma)$ is a polynomial function with nonnegative exponents of $\gamma.$

On the other hand, for the estimation  of  the low-frequency part, we also
need to concentrate more of the derivatives on $b$. More precisely,
 \begin{equation*}
	\begin{split}
		&\left\|\int_{0}^tK_1(t-\tau)\widehat{\Lambda^\varrho \left(b\cdot\nabla u-u\cdot\nabla b\right)\dd \tau}\right\|_{L^2(S_2)}\\ \leq&...+\int_{\frac{t}{2}}^t \left\|\e^{-|\xi|^2(t-\tau)}\w{\Lambda^\varrho \left(b\cdot\nabla u-u\cdot\nabla b\right)}\right\|_{L^2(S_2)}\dd\tau\\
		\leq &...+	C	\int_{\frac{t}{2}}^t (t-\tau)^{-\frac{\varrho-m}{2}-\frac{1}{k}} \left\|\Lambda^{m} \left(u\otimes b\right)\right\|_{L^k(\mr^2)}\dd\tau.
	\end{split}
\end{equation*}
By  choosing \begin{equation*}
	1-\frac{\varrho-m}{2}>\frac{1}{k}\geq\frac{1}{2}+\frac{1}{r_2}+\frac{m-\varrho}{2},~~\frac{1}{r_1}+\frac{1}{r_2}=\frac{1}{k},
\end{equation*}
\begin{equation*}\begin{split}
		r_2: \begin{cases}
			2< r_2<\min\{\frac{2}{1-m},\frac{2}{\varrho-m}\}, &\text{if} ~~m<1,\\[2mm]
			2< r_2<\frac{2}{\varrho-m},&\text{if} ~~m>1,
\end{cases}\end{split}	\end{equation*}
we can deduce
\begin{equation*}\begin{split}
	&\left\|\Lambda^{m} b(\tau)\right\|_{L^{r_1}}\|u(\tau)\|_{L^{r_2}}\\
	\leq& C\left\|\Lambda^{m+1-\frac{2}{r_1}}b(\tau)\right\|_{L^2}\|\Lambda^{1-\frac{2}{r_2}}u(\tau)\|_{L^{2}}\\
	\leq&  C\left(1+\gamma^{1+\frac{1}{c}-\frac{1-\varrho}{2}}\right)\left(1+\gamma^{1-\frac{1}{r_2}+\frac{1}{c}}\right)\left(C_0\|(u_0,b_0,a_0)\|_{X^{m,c}(\mr^2)}\right)^2(1+\tau)^{\frac{1-m}{2}-\frac{1}{c}-1+\frac{1}{k}}.
\end{split}	\end{equation*}
 Then, 
\begin{align*}
&	\int_{\frac{t}{2}}^t (t-\tau)^{-\frac{\varrho-m}{2}-\frac{1}{k}} \left\|\Lambda^{m} b(\tau)\right\|_{L^{r_1}}\|u(\tau)\|_{L^{r_2}}\dd\tau
\\	\leq  & C\left(1+\gamma^{1+\frac{1}{c}-\frac{1-\varrho}{2}}\right)\left(1+\gamma^{1-\frac{1}{r_2}+\frac{1}{c}}\right)\left(C_0\|(u_0,b_0,a_0)\|_{X^{m,c}(\mr^2)}\right)^2(1+\tau)^{\frac{1-m}{2}-\frac{1}{c}-1+\frac{1}{k}}.
\end{align*}

The rest of this paper is divided into three sections. In Section 2, we collect some auxiliary lemmas. Section 3 is devoted to proving Theorem \ref{thm1} and Section 4 is devoted to proving Theorem \ref{thm2}.

\section{Preliminaries}

In this section, we collect preliminaries. 
The following lemma is about commutator estimates and product inequalities, referring to \cite{kato-1988-CommutatorEstimatesEuler} and \cite{kenig}.

\begin{lemma}\label{commutator}
	Let $s>0,1<p<\infty$ and $\frac{1}{p}=\frac{1}{p_1}+\frac{1}{q_1}=\frac{1}{p_2}+\frac{1}{q_2}$.
	\begin{itemize}
		\item  If $f\in L^{p_1}(\R^{n})\cap W^{s,q_2}(\R^{n}),g\in L^{p_2}(\R^{n})\cap W^{s,q_1}(\R^{n})$, there is an absolute constant $C$ such that
		\begin{equation}\begin{split}\label{mul}
		&	\left\|\Lambda^s(f g)\right\|_{L^p(\R^{n})} \\ \leq& C\left(\|f\|_{L^{p_1}(\R^{n})}\left\|\Lambda^s g\right\|_{L^{q_1}(\R^{n})}+\|g\|_{L^{p_2}(\R^{n})}\left\|\Lambda^s f\right\|_{L^{q_2}(\R^{n})}\right).
	\end{split}
		\end{equation}
		\item For any $f\in W^{1,p_1}(\R^{n})\cap W^{s,q_2}(\R^{n}),g\in L^{p_2}(\R^{n})\cap W^{s-1,q_1}(\R^{n})$, there exists an absolute constant $C$ such that
		\begin{equation}\begin{split}\label{com}
				&	\left\|[\Lambda^s, f]g\right\|_{L^p(\R^{n})}\\ \leq& C\left(\|\nabla f\|_{L^{p_1}(\R^{n})}\left\|\Lambda^{s-1} g\right\|_{L^{q_1}(\R^{n})}+\|g\|_{L^{p_2}(\R^{n})}\left\|\Lambda^s f\right\|_{L^{q_2}(\R^{n})}\right),
		\end{split}\end{equation}
	where the bracket $[~,~]$ represents the commutator.
	\end{itemize} 
\end{lemma}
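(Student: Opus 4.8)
The plan is to prove both inequalities by a Littlewood--Paley/Bony paraproduct decomposition, since \eqref{mul} and \eqref{com} are exactly the classical fractional Leibniz and Kato--Ponce commutator estimates. Fix a dyadic partition of unity and write $\Delta_j$ for the frequency projection onto $|\xi|\sim 2^j$ and $S_j=\sum_{k\leq j-1}\Delta_k$ for the low-frequency truncation. Throughout I would use two standard facts valid for $1<p<\infty$: the square-function characterization $\|\Lambda^s h\|_{L^p(\R^n)}\sim \big\|(\sum_j 2^{2sj}|\Delta_j h|^2)^{1/2}\big\|_{L^p(\R^n)}$, and the Hardy--Littlewood maximal bound, which controls $\sup_j|S_j f|\lesssim Mf$ with $M$ bounded on every $L^{p_1}$ (the constraint $1<p<\infty$ forces $p_1\geq p>1$, so $M$ always applies).

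For \eqref{mul} I would decompose the product by Bony's formula $fg=T_fg+T_gf+R(f,g)$, where $T_fg=\sum_k S_{k-1}f\,\Delta_k g$, $T_gf$ is symmetric, and $R(f,g)=\sum_{|j-k|\leq 1}\Delta_j f\,\Delta_k g$. In the low--high paraproduct $T_fg$ each summand $S_{k-1}f\,\Delta_k g$ is frequency-localized in an annulus of size $2^k$, so $\Lambda^s$ acts essentially as multiplication by $2^{sk}$; applying the square-function characterization, pulling out $\sup_k|S_{k-1}f|\lesssim Mf$, and using H\"older with $\tfrac1p=\tfrac1{p_1}+\tfrac1{q_1}$ yields the bound $\|f\|_{L^{p_1}}\|\Lambda^s g\|_{L^{q_1}}$. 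The high--low paraproduct $T_gf$ is handled identically and gives $\|g\|_{L^{p_2}}\|\Lambda^s f\|_{L^{q_2}}$. The remainder $R(f,g)$ is not localized to a single annulus, so here I would use $s>0$ decisively: writing $\Delta_\ell R(f,g)=\sum_{k\gtrsim \ell}\Delta_\ell(\Delta_k f\,\tilde\Delta_k g)$ and distributing $2^{s\ell}\lesssim 2^{sk}$ onto the factors, the sum over $k\geq\ell$ converges geometrically and is absorbed into either of the two stated terms.

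For the commutator estimate \eqref{com} I would start from $[\Lambda^s,f]g=\Lambda^s(fg)-f\,\Lambda^s g$ and run the same paraproduct split. The high--low and remainder pieces carry no dangerous high frequency on $g$ and are estimated as above, already displaying a derivative on $f$. The delicate contribution is the low--high part, where both $\Lambda^s(S_{k-1}f\,\Delta_k g)$ and $S_{k-1}f\,\Lambda^s\Delta_k g$ occur; their difference is the commutator of the multiplier $\Lambda^s$ (restricted to $|\xi|\sim 2^k$) with multiplication by the slowly varying $S_{k-1}f$. A first-order Taylor/mean-value expansion of the symbol $|\xi|^s$ about the center of the annulus shows this commutator gains exactly one derivative, i.e. it behaves like $2^{(s-1)k}\,\nabla S_{k-1}f\cdot(\cdots)$; summing as before and applying H\"older produces $\|\nabla f\|_{L^{p_1}}\|\Lambda^{s-1}g\|_{L^{q_1}}$, while the remaining pieces yield $\|g\|_{L^{p_2}}\|\Lambda^s f\|_{L^{q_2}}$.

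I expect the main obstacle to be the symbolic gain-of-derivative step in the commutator: making rigorous that $[\Lambda^s,S_{k-1}f]\Delta_k g$ is controlled by $2^{-k}\|\nabla S_{k-1}f\|\,\|\Lambda^s\Delta_k g\|$ requires a convolution-kernel estimate for the operator with symbol $|\xi|^s-|\eta|^s$ acting on the localized pair, together with a Coifman--Meyer type bound. The remainder term in \eqref{mul} is a secondary difficulty, handled cleanly once $s>0$ is exploited to sum the geometric series. Since these are precisely the inequalities of Kato--Ponce and Kenig--Ponce--Vega, an acceptable alternative is to invoke \cite{kato-1988-CommutatorEstimatesEuler,kenig} directly; the paraproduct argument above is the self-contained route.
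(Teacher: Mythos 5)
The paper offers no proof of this lemma at all: it is quoted as a known result, with the reader sent to \cite{kato-1988-CommutatorEstimatesEuler} and \cite{kenig} --- i.e., the fallback you name in your closing paragraph is exactly what the authors do. Your paraproduct argument is therefore a genuinely different, self-contained route, and its skeleton is the standard modern proof of the Kato--Ponce and Kenig--Ponce--Vega inequalities: Bony decomposition, maximal-function control of the low-frequency factors (your observation that $p_1\geq p>1$ is correct, and $p_1=\infty$ --- which the paper actually needs in Section 3 --- is also fine since $\sup_k|S_{k-1}f|\leq C\|f\|_{L^\infty}$ directly), the square-function characterization of $\|\Lambda^s\cdot\|_{L^p}$ for $1<p<\infty$, geometric summation of the remainder using $s>0$, and a one-derivative gain in $[\Lambda^s,S_{k-1}f]\Delta_k g$ from a mean-value estimate on the symbol. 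What the citation buys the paper is economy, since the lemma is used only as a black box; what your route buys is self-containedness, at the cost of several steps that must be made rigorous: the Fefferman--Stein vector-valued maximal inequality to pass from pointwise to $L^p$ square-function bounds, and the convolution-kernel estimate behind the symbol commutator, which you correctly flag.

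One step in your commutator argument is glossed in a way that, as written, is wrong and needs repair. Expanding $[\Lambda^s,f]g=\Lambda^s(fg)-f\Lambda^s g$ through paraproducts produces, from the second term, the pieces $T_{\Lambda^s g}f=\sum_k S_{k-1}(\Lambda^s g)\,\Delta_k f$ and $R(f,\Lambda^s g)$. Your claim that the high--low and remainder pieces are ``already displaying a derivative on $f$'' fails for these two: they carry all $s$ derivatives on $g$ and none on $f$, and neither $\|\Lambda^s g\|_{L^{q_1}}$ nor $\|g\|_{L^{p_2}}\|f\|_{L^{q_2}}$ appears on the admissible right-hand side of \eqref{com}. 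The fix is standard but must be said: for $T_{\Lambda^s g}f$ use the Bernstein-type trades $|S_{k-1}\Lambda^s g|\leq C\,2^{k}M(\Lambda^{s-1}g)$ and $2^{k}|\Delta_k f|\leq C\,M(\nabla f)$, so that this piece also lands on $\|\nabla f\|_{L^{p_1}}\|\Lambda^{s-1}g\|_{L^{q_1}}$ (this, together with the low--high commutator, is where the hypothesis $g\in W^{s-1,q_1}$ rather than $W^{s,q_1}$ is actually consumed); for $R(f,\Lambda^s g)$ shift all $s$ derivatives onto $f$ across the comparable frequencies $j\approx k$ and again use $s>0$ to sum over the output frequencies, yielding $\|g\|_{L^{p_2}}\|\Lambda^s f\|_{L^{q_2}}$. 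With these repairs, and the kernel estimate you already identified as the main obstacle, your outline closes; alternatively, the citation route the paper takes is unimpeachable for a result of this vintage.
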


 The   Gagliardo-Nirenberg interpolation inequality and the Hausdorff-Young inequality as follows are useful in the proof of the main theorems.

\begin{lemma}[\cite{gag-nirenberg}]\label{gag}
Let $0\leq r< s_2<\infty$ and $1 < p_1, p_2, q \leq \infty.$ Then, there exists a positive constant $C$ such  that
	\begin{align*}
		\|\Lambda^rf\|_{L^q(\R^{n})}\leq C	\|\Lambda^{s_1}f\|^{\theta}_{L^{p_1}(\R^{n})}	\|\Lambda^{s_2}f\|^{1-\theta}_{L^{p_2}(\R^{n})},
	\end{align*}
with 
	\begin{align*}
		\frac{1}{q}-\frac{r}{n}=\theta\left(\frac{1}{p_1}-\frac{s_1}{n}\right)+(1-\theta)\left(\frac{1}{p_2}-\frac{s_2}{n}\right),
	\end{align*} where $0\leq\theta\leq1-\frac{r}{s_2}$ $(\theta\neq0 ~\text{if}~ q=\infty).$
\end{lemma}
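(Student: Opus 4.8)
The plan is to prove the inequality by Littlewood--Paley frequency localization combined with Bernstein's inequality, reducing it to a one-parameter optimization in which the balance point produces exactly the weights $\theta$ and $1-\theta$. Writing $f=\sum_{j\in\mathbb{Z}}\dot\Delta_j f$ for the homogeneous Littlewood--Paley decomposition, where $\dot\Delta_j$ projects onto the annulus $\{|\xi|\sim2^{j}\}$, the triangle inequality gives $\|\Lambda^r f\|_{L^q}\leq\sum_{j}\|\Lambda^r\dot\Delta_j f\|_{L^q}$. For a single block I would combine $\|\Lambda^r\dot\Delta_j f\|_{L^q}\lesssim2^{jr}\|\dot\Delta_j f\|_{L^q}$, Bernstein's inequality $\|\dot\Delta_j g\|_{L^q}\lesssim2^{jn(1/p_i-1/q)}\|\dot\Delta_j g\|_{L^{p_i}}$, and $\|\dot\Delta_j f\|_{L^{p_i}}\lesssim2^{-js_i}\|\Lambda^{s_i}\dot\Delta_j f\|_{L^{p_i}}$ to obtain, for either endpoint $i\in\{1,2\}$,
\begin{equation*}
\|\Lambda^r\dot\Delta_j f\|_{L^q}\lesssim2^{j\alpha_i}\|\Lambda^{s_i}\dot\Delta_j f\|_{L^{p_i}},\qquad \alpha_i:=r+n\Big(\tfrac{1}{p_i}-\tfrac{1}{q}\Big)-s_i.
\end{equation*}

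The scaling hypothesis is precisely what fixes the signs of the two exponents. Setting $\sigma_i:=s_i-n/p_i$ and $\sigma:=r-n/q$, the assumed relation reads $\sigma=\theta\sigma_1+(1-\theta)\sigma_2$, and a direct computation gives $\alpha_i=\sigma-\sigma_i$, so that
\begin{equation*}
\alpha_1=(1-\theta)(\sigma_2-\sigma_1),\qquad\alpha_2=-\theta(\sigma_2-\sigma_1).
\end{equation*}
Assuming without loss of generality $\sigma_1<\sigma_2$, one has $\alpha_1>0>\alpha_2$. Using the endpoint $i=1$ on the low frequencies $j\leq N$ and the endpoint $i=2$ on the high frequencies $j>N$, summing the resulting geometric series, and bounding $\sup_j\|\Lambda^{s_i}\dot\Delta_j f\|_{L^{p_i}}\lesssim\|\Lambda^{s_i}f\|_{L^{p_i}}$ (uniform boundedness of $\dot\Delta_j$ on every $L^{p_i}$), I arrive at
\begin{equation*}
\|\Lambda^r f\|_{L^q}\lesssim2^{N\alpha_1}\|\Lambda^{s_1}f\|_{L^{p_1}}+2^{N\alpha_2}\|\Lambda^{s_2}f\|_{L^{p_2}}.
\end{equation*}
Optimizing over $N\in\mathbb{Z}$ by balancing the two terms, and using $\alpha_1-\alpha_2=\sigma_2-\sigma_1$, collapses the powers to exactly $\theta$ and $1-\theta$, which is the claim. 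The pure endpoints $\theta\in\{0,1\}$ and the degenerate case $\sigma_1=\sigma_2$ reduce directly to a single Sobolev embedding, and the exclusion $\theta\neq0$ when $q=\infty$ reflects the failure of the critical embedding into $L^\infty$.

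The main obstacle is that Bernstein's inequality is available only in the integrability-increasing direction, i.e.\ it requires $p_i\leq q$ for the endpoint used on each frequency range. When $\max\{p_1,p_2\}>q$ — most sharply when one $p_i=\infty$, as in the genuine inequality $\|\nabla f\|_{L^4(\R^2)}^2\lesssim\|f\|_{L^\infty}\|\Lambda^2 f\|_{L^2}$ — the block-wise step breaks down, since a reverse Bernstein bound $\|\dot\Delta_j f\|_{L^q}\lesssim2^{-jn(1/q-1/p_i)}\|\dot\Delta_j f\|_{L^{p_i}}$ is simply false for a single frequency block (the $L^{p_i}$ norm of a widely spread block carries no control of its $L^q$ mass when $q<p_i$). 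For these endpoints the sharp tool is instead the classical Nirenberg integration-by-parts argument: for the prototype above one expands $\int|\nabla f|^4$, integrates by parts to move a derivative off $|\nabla f|^2\nabla f$ onto $f$, and closes with Hölder to reach $\|\nabla f\|_{L^4}^4\lesssim\|f\|_{L^\infty}\|\Lambda^2 f\|_{L^2}\|\nabla f\|_{L^4}^2$. I would either import these cases from \cite{gag-nirenberg} or handle them by such integer-order arguments followed by interpolation, and I expect this integrability-direction issue to be the only genuinely delicate point. I note, however, that in every application of this lemma in the present paper the two endpoints have $p_1=p_2=2\leq q$, so the Littlewood--Paley argument above is already self-contained for the purposes at hand.
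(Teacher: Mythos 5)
The paper offers no proof of this lemma at all: it is imported wholesale from Nirenberg's paper \cite{gag-nirenberg} and used as a black box, so there is no internal argument to compare with; your proposal is therefore, by construction, a different route. Within the regime it claims to cover, your Littlewood--Paley proof is correct and the exponent bookkeeping checks out: with $\sigma_i=s_i-n/p_i$ and $\sigma=r-n/q$ the scaling hypothesis gives $\alpha_i=\sigma-\sigma_i$, hence $\alpha_1=(1-\theta)(\sigma_2-\sigma_1)>0>\alpha_2=-\theta(\sigma_2-\sigma_1)$ for $0<\theta<1$, the two geometric sums are controlled by $2^{N\alpha_1}\|\Lambda^{s_1}f\|_{L^{p_1}}$ and $2^{N\alpha_2}\|\Lambda^{s_2}f\|_{L^{p_2}}$, and balancing over $N\in\mathbb{Z}$ yields exactly the weights $\theta$, $1-\theta$ because $\alpha_1/(\alpha_1-\alpha_2)=1-\theta$. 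You also correctly identify the genuine limitation: block-wise Bernstein runs only in the integrability-increasing direction, so this argument needs $p_1,p_2\le q$, and your prototype $\|\nabla f\|_{L^4(\R^2)}^2\lesssim\|f\|_{L^\infty}\|\Lambda^2 f\|_{L^2}$ is precisely a case where the frequency-local step is false and Nirenberg's integration-by-parts argument is the right tool. The trade-off between the two routes is then clear: the paper (via Nirenberg) gets the full admissible range of $(p_1,p_2,q)$ by citing a real-variable proof; you get a short, self-contained argument that natively handles fractional $r,s_1,s_2$ but only for $p_1,p_2\le q$, deferring the remaining cases to the same reference --- which is no worse than what the paper does for the entire lemma. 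Two minor points to patch: first, the homogeneous decomposition $f=\sum_j\dot\Delta_j f$ holds only modulo polynomials, which deserves a sentence (harmless here, since low-degree polynomials are annihilated on both sides of the inequality); second, your closing claim that every application in the paper has $p_1=p_2=2\le q$ has one exception, namely $\|u\|_{L^{2r}}\le C\|u\|_{L^2}^{1/2}\|u\|_{L^q}^{1/2}$ with $1/r=1/2+1/q$ in the $L^q$-decay proof, where one endpoint exponent is $q>2r$; that instance, however, has $r=s_1=s_2=0$, so it is plain log-convexity of Lebesgue norms via H\"{o}lder's inequality (and in fact falls outside the lemma's hypothesis $r<s_2$ altogether), so no Bernstein step is ever needed for it.
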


\begin{lemma}[\cite{fourier-2009-nonlinear-ponce-hausdroffyoung}]\label{hausdorff-Young}
	 If $f\in L^p(\mathbb{R}^n), 1\leq p\leq 2$ and $\dfrac{1}{p}+\dfrac{1}{q}=1$, then $\hat{f}\in L^{q}(\mathbb{R}^n)$ which satisfies
	\begin{align*}
		\|\hat{f}\|_{L^{q}(\mathbb{R}^n)}\leq C \|f\|_{L^p(\mathbb{R}^n)}.
	\end{align*}
\end{lemma}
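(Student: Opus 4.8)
The plan is to obtain the inequality as an instance of the Riesz--Thorin interpolation theorem, viewing the Fourier transform $\mathscr{F}$ as a linear operator between $L^p$ spaces and interpolating between the two endpoint cases $p=1$ and $p=2$. First I would record the two endpoint bounds. For $p=1$ (so $q=\infty$) the estimate is immediate from the definition of $\mathscr{F}$: for every $\xi$ one has
\begin{align*}
\left|\mathscr{F}(f)(\xi)\right| \leq \frac{1}{2\pi}\int_{\R^n}\left|f(x)\right|\mathrm{d}x,
\end{align*}
so $\|\widehat{f}\|_{L^\infty(\R^n)} \leq C_1\|f\|_{L^1(\R^n)}$ with $C_1$ fixed by the chosen normalization. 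For $p=2$ (so $q=2$) Plancherel's theorem gives the identity $\|\widehat{f}\|_{L^2(\R^n)} = C_2\|f\|_{L^2(\R^n)}$, again with a normalization constant $C_2$. Thus $\mathscr{F}$ is of strong type $(1,\infty)$ and of strong type $(2,2)$.

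Next I would feed these two bounds into Riesz--Thorin. Given $1<p<2$ with $\frac{1}{p}+\frac{1}{q}=1$, I choose $\theta\in(0,1)$ by $\frac{1}{p}=(1-\theta)+\frac{\theta}{2}$, which is the same as $\theta=\frac{2}{q}$; the identical $\theta$ then satisfies $\frac{1}{q}=(1-\theta)\cdot 0+\frac{\theta}{2}$, so the exponent pair $(p,q)$ lies precisely on the segment joining the endpoints $(1,\infty)$ and $(2,2)$. The interpolation theorem therefore yields
\begin{align*}
\|\widehat{f}\|_{L^q(\R^n)} \leq C_1^{1-\theta}C_2^{\theta}\,\|f\|_{L^p(\R^n)} =: C\,\|f\|_{L^p(\R^n)},
\end{align*}
which is the claimed bound, while the cases $p=1$ and $p=2$ are already covered by the endpoint estimates above.

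The only delicate point --- and the closest thing to an obstacle --- is the meaning of $\widehat{f}$ when $f\in L^p(\R^n)$ with $1<p<2$, since such $f$ need not belong to $L^1$ and the defining integral need not converge pointwise. I would resolve this by first proving the inequality on the dense subspace $L^1(\R^n)\cap L^2(\R^n)$, on which $\mathscr{F}$ is unambiguously defined and both endpoint bounds apply, and then extending $\mathscr{F}$ to $L^p$ by density: for $f\in L^p$ choose $f_k\in L^1\cap L^2$ with $f_k\to f$ in $L^p$, and define $\widehat{f}$ as the $L^q$-limit of $\widehat{f_k}$, which exists because the interpolated bound makes $(\widehat{f_k})$ Cauchy in $L^q$. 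Since the statement is classical and is cited here from \cite{fourier-2009-nonlinear-ponce-hausdroffyoung}, I would keep this extension argument brief and rely on Riesz--Thorin for the substance.
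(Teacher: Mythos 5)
The paper offers no proof of this lemma: it is the classical Hausdorff--Young inequality, stated with a citation to \cite{fourier-2009-nonlinear-ponce-hausdroffyoung}, so your argument is being compared against the standard textbook proof. Your Riesz--Thorin route --- interpolating between the trivial $L^1\to L^\infty$ bound and Plancherel's $L^2\to L^2$ identity, with the density extension defining $\widehat{f}$ for $f\in L^p$, $1<p<2$ --- is exactly that standard proof (the one given in the cited text) and is correct as written.
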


The next two lemmas provide the  integral representation and the properties of the kernel operator for system \eqref{mhdwave}. We can refer to \cite{wu-2022-wave} (Proposition 2.2 and Lemma 3.1) for proof.

\begin{lemma}\label{integral}
	System \eqref{mhdwave} can be equivalently expressed in terms of the following integral representation
\begin{align}\label{u integral}
	u(x,t)&=\e^{t\Delta}u_0+\int_0^t \e^{(t-\tau)\Delta}\mathbb{P}\left(b\cdot\nabla b-u\cdot\nabla u\right)(\tau)\dd \tau.\\\label{b integral}
	b(x,t)&=\left(K_0+\frac{1}{2}K_1\right)b_0+\gamma K_1a_0+\int_{0}^{t}K_1(t-\tau)\left(b\cdot\nabla u-u\cdot\nabla b\right)(\tau)\dd \tau.
\end{align}
Here $\mathbb{P}=I-\nabla \Delta^{-1}\diver$ represents the projection operator onto divergence-free vector fields, and the kernel functions $K_0$ and $K_1$ are defined as follows
\begin{align*}
	\w{K_0}(\xi,t)=\dfrac{1}{2}\left(\e^{\lambda_+t}+\e^{\lambda_-t}\right),~~ \w{K_1}(\xi,t)=\dfrac{\e^{\lambda_+t}-\e^{\lambda_-t}}{\gamma(\lambda_+-\lambda_-)}
\end{align*}
with
\begin{align*}
 \lambda_\pm=\dfrac{-1\pm \sqrt{1-4\gamma|\xi|^2}}{2\gamma}.
\end{align*}
\end{lemma}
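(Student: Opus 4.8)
The plan is to handle the two equations of \eqref{mhdwave} separately: the velocity equation is a forced heat equation once the pressure is removed, while the magnetic equation, after a Fourier transform, becomes for each frequency $\xi$ a scalar second-order constant-coefficient ODE in $t$, which I solve by the classical variation-of-parameters (Green's function) formula and then match against the kernels $\w{K_0}$, $\w{K_1}$.

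For the velocity field I would apply the Leray projection $\mathbb{P}=I-\nabla\Delta^{-1}\diver$ to the first line of \eqref{mhdwave}. Since $u$ is divergence free one has $\mathbb{P}u=u$, and $\mathbb{P}$ commutes with $\partial_t$ and $\Delta$ while annihilating $\nabla p$; hence $\partial_t u-\Delta u=\mathbb{P}(b\cdot\nabla b-u\cdot\nabla u)$. Duhamel's principle against the heat semigroup $\e^{t\Delta}$ then gives \eqref{u integral} directly, with no further work.

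For the magnetic field I would take the Fourier transform of $\gamma\partial_{tt}b+\partial_t b-\Delta b=b\cdot\nabla u-u\cdot\nabla b$, obtaining for each fixed $\xi$ the ODE $\gamma\partial_{tt}\w b+\partial_t\w b+|\xi|^2\w b=\w N(\xi,t)$ with $\w N=\widehat{b\cdot\nabla u-u\cdot\nabla b}$, subject to $\w b(\xi,0)=\w{b_0}(\xi)$ and $\partial_t\w b(\xi,0)=\w{a_0}(\xi)$. The characteristic polynomial $\gamma\lambda^2+\lambda+|\xi|^2$ has roots $\lambda_\pm$, so the homogeneous solutions are $\e^{\lambda_+ t}$ and $\e^{\lambda_- t}$. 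I would build the solution from the normalized basis $G_1=(\e^{\lambda_+ t}-\e^{\lambda_- t})/(\lambda_+-\lambda_-)$ (with $G_1(0)=0$, $G_1'(0)=1$) and $G_0=(\lambda_+\e^{\lambda_- t}-\lambda_-\e^{\lambda_+ t})/(\lambda_+-\lambda_-)$ (with $G_0(0)=1$, $G_0'(0)=0$): the homogeneous part is $G_0\w{b_0}+G_1\w{a_0}$, and the impulse response of the normalized equation ($K(0)=0$, $K'(0)=\gamma^{-1}$) is exactly $\gamma^{-1}G_1=\w{K_1}$, so the forced part is $\int_0^t\w{K_1}(\xi,t-\tau)\w N(\xi,\tau)\dd\tau$. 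Inverting the Fourier transform yields \eqref{b integral}, \emph{provided} the two algebraic identities $G_1=\gamma\w{K_1}$ and $G_0=\w{K_0}+\tfrac12\w{K_1}$ hold.

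The only genuine computation is the second identity, and it is short: using $\lambda_++\lambda_-=-\gamma^{-1}$ I would expand $G_0$ in the basis $\{\tfrac12(\e^{\lambda_+ t}+\e^{\lambda_- t}),\,(\e^{\lambda_+ t}-\e^{\lambda_- t})/(\lambda_+-\lambda_-)\}$ and read off the coefficients $1$ and $\tfrac{1}{2\gamma}$, which is precisely $\w{K_0}+\tfrac12\w{K_1}$. I expect the main (and only mild) obstacle to be correctly tracking the $\gamma$ normalizations when passing from the characteristic ODE to the Green's function, together with checking uniformity across frequency regimes: the roots $\lambda_\pm$ are real and distinct for $4\gamma|\xi|^2<1$, complex conjugate for $4\gamma|\xi|^2>1$, and coincide on $4\gamma|\xi|^2=1$. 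Since the quotients defining $\w{K_0}$ and $\w{K_1}$ are symmetric in $\lambda_\pm$, they remain real in the oscillatory regime and extend continuously (with $\w{K_1}\to t\e^{\lambda t}/\gamma$) across the degenerate sphere, so the final representation holds for all $\xi$ with no case distinction.
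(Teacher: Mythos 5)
Your proof is correct: the Leray-projection/Duhamel step for $u$, the frequency-wise variation-of-parameters solution of $\gamma\partial_{tt}\w b+\partial_t\w b+|\xi|^2\w b=\w N$, and the two kernel identities $G_1=\gamma\w{K_1}$ and $G_0=\w{K_0}+\tfrac12\w{K_1}$ (which follow from $\lambda_++\lambda_-=-\gamma^{-1}$) yield exactly \eqref{u integral}--\eqref{b integral}, including the continuous extension across the degenerate sphere $4\gamma|\xi|^2=1$. The paper itself gives no proof, citing instead Proposition 2.2 and Lemma 3.1 of \cite{wu-2022-wave}, where the derivation is this same Fourier-side ODE computation, so your argument is essentially the standard one and simply supplies the details the paper delegates to that reference.
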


\begin{lemma}\label{kernel prop}
		Define
	\begin{align*}
		S_1=\{\xi\in\mathbb{R}^2; 4\gamma|\xi|^2\geq \dfrac{3}{4}\},~~ S_2=\{\xi\in\mathbb{R}^2; 4\gamma|\xi|^2< \dfrac{3}{4}\}.
	\end{align*}
The definitions of $K_0$, $K_1$ and $\lambda_\pm$ are as shown in Lemma \ref{integral}. Then

	\noindent	(i) for $\xi\in S_1$, there holds
	\begin{align}\label{fren-1}
		|\w{K_0}(\xi,t)|, ~|\w{K_1}(\xi,t)|\leq C\e^{-\frac{1}{8\gamma}t}.
	\end{align}
More generally, for any $0\leq \vartheta\leq 1$,
	\begin{align}\label{fren-2}
	 |\w{K_1}(\xi,t)|\leq C\gamma^{-\frac{\vartheta}{2}}|\xi|^{-\vartheta}\e^{-\frac{1}{8\gamma}t}.
	\end{align}
	
	\noindent(ii) for $\xi\in S_2$, there holds
	\begin{align}\label{fren-3}
		|\w{K_0}(\xi,t)|, ~|\w{K_1}(\xi,t)|\leq C\left(\e^{-\frac{3}{4\gamma}t}+\e^{-|\xi|^2t}\right)\leq C\e^{-|\xi|^2t}.
	\end{align}
\end{lemma}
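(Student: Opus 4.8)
The plan is to estimate the symbols $\w{K_0}$ and $\w{K_1}$ directly, exploiting that $\lambda_\pm$ are the two roots of the characteristic quadratic $\gamma\lambda^2+\lambda+|\xi|^2=0$, so that $\lambda_++\lambda_-=-\frac1\gamma$, $\lambda_+\lambda_-=\frac{|\xi|^2}{\gamma}$, and $\lambda_+-\lambda_-=\frac{\sqrt{1-4\gamma|\xi|^2}}{\gamma}$. Set $D:=1-4\gamma|\xi|^2$; its sign decides whether the roots are real or a complex-conjugate pair, and the splitting into $S_1$ (where $D\le\frac14$) and $S_2$ (where $D>\frac14$) is exactly tailored to control the real parts of $\lambda_\pm$. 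The only subtlety is the factor $\lambda_+-\lambda_-$ in the denominator of $\w{K_1}$, which degenerates as $D\to0$. I would remove this apparent singularity once and for all through the divided-difference identity
\[
\frac{\e^{\lambda_+t}-\e^{\lambda_-t}}{\lambda_+-\lambda_-}=\int_0^t\e^{\lambda_-(t-s)+\lambda_+s}\dd s,
\]
whose right-hand side is an entire function of $(\lambda_+,\lambda_-)$ and hence valid across the double-root point and in the complex case alike.

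For part (i), I first bound the real parts on $S_1$, where $D\le\frac14$. If $0\le D\le\frac14$ the roots are real with $\sqrt D\le\frac12$, so $\lambda_+=\frac{-1+\sqrt D}{2\gamma}\le-\frac{1}{4\gamma}$ and a fortiori $\lambda_-\le\lambda_+\le-\frac{1}{4\gamma}$; if $D<0$ the roots are conjugate with $\mathrm{Re}\,\lambda_\pm=-\frac{1}{2\gamma}$. In every case $|\e^{\lambda_\pm t}|\le\e^{-t/(4\gamma)}$, which gives the bound for $\w{K_0}$ at once. For $\w{K_1}$ I insert the divided-difference identity: the exponent $\lambda_-(t-s)+\lambda_+s$ has real part $\le-\frac{1}{4\gamma}t$ for every $s\in[0,t]$, whence $|\w{K_1}|\le\frac1\gamma\int_0^t\e^{-t/(4\gamma)}\dd s=\frac{t}{\gamma}\e^{-t/(4\gamma)}\le C\e^{-t/(8\gamma)}$, using $x\e^{-x/8}\le C$ with $x=t/\gamma$. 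This proves \eqref{fren-1}.

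The refined bound \eqref{fren-2} I obtain by interpolation between $\vartheta=0$ and $\vartheta=1$. The case $\vartheta=0$ is \eqref{fren-1}. For $\vartheta=1$ I split $S_1$: when $4\gamma|\xi|^2\ge2$ one has $|D|=4\gamma|\xi|^2-1\ge2\gamma|\xi|^2$, so $|\lambda_+-\lambda_-|=\frac{\sqrt{|D|}}{\gamma}\ge\sqrt2\,\gamma^{-1/2}|\xi|$, and then $|\w{K_1}|\le\frac{|\e^{\lambda_+t}|+|\e^{\lambda_-t}|}{\gamma|\lambda_+-\lambda_-|}\le C\gamma^{-1/2}|\xi|^{-1}\e^{-t/(8\gamma)}$; when $\frac34\le4\gamma|\xi|^2<2$ one has $\gamma^{-1/2}|\xi|^{-1}\asymp1$, so the same estimate follows directly from \eqref{fren-1}. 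Writing $\gamma^{-\vartheta/2}|\xi|^{-\vartheta}=(\gamma^{-1/2}|\xi|^{-1})^{\vartheta}$ and combining the $\vartheta=1$ and $\vartheta=0$ bounds via $|\w{K_1}|=|\w{K_1}|^{\vartheta}|\w{K_1}|^{1-\vartheta}$ yields \eqref{fren-2} for all $0\le\vartheta\le1$.

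Finally, for part (ii) on $S_2$ we have $D>\frac14$, so the roots are real and $\sqrt D\in(\frac12,1]$. I estimate $\lambda_-=\frac{-1-\sqrt D}{2\gamma}\le-\frac{3}{4\gamma}$, and rationalize the numerator of $\lambda_+$ to get $\lambda_+=-\frac{2|\xi|^2}{1+\sqrt D}\le-|\xi|^2$, so that $|\e^{\lambda_+t}|\le\e^{-|\xi|^2t}$ and $|\e^{\lambda_-t}|\le\e^{-3t/(4\gamma)}$; this gives the $\w{K_0}$ bound. Here the denominator no longer degenerates: $\gamma(\lambda_+-\lambda_-)=\sqrt D\ge\frac12$, so $|\w{K_1}|\le2(|\e^{\lambda_+t}|+|\e^{\lambda_-t}|)$ is controlled by the same two exponentials. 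The last inequality in \eqref{fren-3} follows since $4\gamma|\xi|^2<\frac34$ forces $|\xi|^2<\frac{3}{16\gamma}<\frac{3}{4\gamma}$, hence $\e^{-3t/(4\gamma)}\le\e^{-|\xi|^2t}$. The main obstacle throughout is the uniform control of $\w{K_1}$ across the transition $4\gamma|\xi|^2\approx1$, where $\lambda_+-\lambda_-$ vanishes; the divided-difference identity is what keeps the estimate finite there and all constants independent of $\gamma$.
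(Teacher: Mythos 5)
Your proof is correct: the root identities, the sign analysis on $S_1$ and $S_2$, the rationalization $\lambda_+=-\tfrac{2|\xi|^2}{1+\sqrt D}\le -|\xi|^2$, and the final comparison $\e^{-3t/(4\gamma)}\le\e^{-|\xi|^2t}$ on $S_2$ all check out, and the constants are indeed absolute. Note, however, that the paper itself does not prove this lemma at all — it is quoted from Ji--Wu--Xu \cite{wu-2022-wave} (Proposition 2.2 and Lemma 3.1) — so there is no in-text argument to compare against; your write-up is a self-contained substitute for that citation. Two features of your argument are worth highlighting as genuinely efficient choices. First, the divided-difference identity $\frac{\e^{\lambda_+t}-\e^{\lambda_-t}}{\lambda_+-\lambda_-}=\int_0^t\e^{\lambda_-(t-s)+\lambda_+s}\dd s$ disposes of the apparent singularity of $\w{K_1}$ at the double-root point $4\gamma|\xi|^2=1$ in one stroke, uniformly across the real and complex-conjugate regimes; the standard alternative (writing $\w{K_1}$ via $\sinh$ or $\sin$ and invoking $|\sin x|\le\min(1,|x|)$, $\sinh x\le x\e^{x}$) requires a three-way case analysis near the transition. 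Second, proving \eqref{fren-2} only at the endpoints $\vartheta=0,1$ and then interpolating multiplicatively via $|\w{K_1}|=|\w{K_1}|^{\vartheta}|\w{K_1}|^{1-\vartheta}$ is cleaner than estimating $|\xi|^{-\vartheta}$-weighted bounds directly for each $\vartheta$; the only case split you need ($4\gamma|\xi|^2\ge 2$ versus $\tfrac34\le 4\gamma|\xi|^2<2$) is exactly where $|\lambda_+-\lambda_-|$ can be compared to $\gamma^{-1/2}|\xi|$ versus where $\gamma^{-1/2}|\xi|^{-1}\asymp 1$ makes the weight harmless.
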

The lemma as follows is crucial in dealing with the decay estimates of the nonlinearity terms involving high frequency of $b$ equation.

\begin{lemma}\label{expintegral}
	Let $R>0$. Then for any $t>0$ and $\varkappa>0$,
	\begin{align}\label{p-1}
		\int_{0}^{t}\e^{-R(t-\tau)}(1+\tau)^{-\frac{1}{\varkappa}} \dd \tau\leq  \begin{cases}
			C(1+t)^{-1}\left(R^{-1}+R^{-2}\right), & \varkappa=1,\\
			C(1+t)^{-\frac{1}{\varkappa}}\left(1+R^{-1}\right),&\varkappa>1,\\
			C(1+t)^{-\frac{1}{\varkappa}}\left(1+R^{-\frac{1}{\varkappa}}+R^{-1}\right),&\varkappa<1.
		\end{cases}
	\end{align}
	and for any $t\geq 1$,
	\begin{align}\label{p-2}
		\int_{0}^{t}\e^{-R(t-\tau)}(1+\tau)^{-\frac{1}{\varkappa}} \dd \tau\leq  \begin{cases}
			C(1+t)^{-1}\left(R^{-1}+R^{-2}\right), & \varkappa=1,\\
			C(1+t)^{-\frac{1}{\varkappa}}R^{-1},&\varkappa>1,\\
			C(1+t)^{-\frac{1}{\varkappa}}\left(R^{-\frac{1}{\varkappa}}+R^{-1}\right),&\varkappa<1.
		\end{cases}
	\end{align}
	If $\varkappa>1$, then for any $t>0$,
	\begin{align}\label{p-3}
		\int_{0}^{t}\e^{-R(t-\tau)}\tau^{-\frac{1}{\varkappa}} \dd \tau\leq C t^{-\frac{1}{\varkappa}}R^{-1}.
	\end{align}
	Here, the constants $C's$ depend on $\varkappa$ only.
\end{lemma}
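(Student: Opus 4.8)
The plan is to establish all three estimates by a single device: split the time integral at $\tau=t/2$ and treat the near part $[t/2,t]$ and the far part $[0,t/2]$ separately, relying throughout on the elementary bound $\sup_{x\ge 0}x^a\e^{-x}=C_a<\infty$ for every $a>0$, which in the rescaled form $\e^{-Rt/2}t^a\le C_aR^{-a}$ is precisely what converts exponential-in-$Rt$ decay into negative powers of $R$.

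On the near part the weight is essentially frozen: for \eqref{p-1} and \eqref{p-2} one has $(1+\tau)^{-1/\varkappa}\le C(1+t)^{-1/\varkappa}$ on $[t/2,t]$, while for \eqref{p-3} one has $\tau^{-1/\varkappa}\le Ct^{-1/\varkappa}$. Pulling this factor out and using $\int_{t/2}^t\e^{-R(t-\tau)}\dd\tau=\int_0^{t/2}\e^{-Rs}\dd s\le R^{-1}$ produces the principal term $C(1+t)^{-1/\varkappa}R^{-1}$ (respectively $Ct^{-1/\varkappa}R^{-1}$). This already yields all of \eqref{p-3} once the far part is controlled, and it supplies the $R^{-1}$ contributions in \eqref{p-1}--\eqref{p-2}.

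The far part is where the three cases separate, since there $\e^{-R(t-\tau)}\le\e^{-Rt/2}$ may be pulled out, leaving $\e^{-Rt/2}\int_0^{t/2}(1+\tau)^{-1/\varkappa}\dd\tau$, whose growth in $t$ depends on whether $1/\varkappa$ exceeds $1$. For $\varkappa>1$ the antiderivative grows like $(1+t)^{1-1/\varkappa}$, and $\e^{-Rt/2}(1+t)\le C(1+R^{-1})$, sharpened to $\e^{-Rt/2}(1+t)\le CR^{-1}$ when $t\ge1$, gives the stated bounds. For $\varkappa<1$ the antiderivative is bounded, so the far part is at most $C\e^{-Rt/2}$, and writing $(1+t)^{1/\varkappa}\le C(1+t^{1/\varkappa})$ together with $\e^{-Rt/2}t^{1/\varkappa}\le CR^{-1/\varkappa}$ produces the $R^{-1/\varkappa}$ terms, the stray additive constant being absorbed once $t\ge1$. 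The borderline $\varkappa=1$ is the only slightly delicate point: the antiderivative is $\ln(1+t/2)$, and to recover both $R^{-1}$ and $R^{-2}$ I would use $\ln(1+t)\le t$, so that $\e^{-Rt/2}(1+t)\ln(1+t)\le\e^{-Rt/2}(t+t^2)\le C(R^{-1}+R^{-2})$.

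The main obstacle is purely bookkeeping: matching the exact powers of $R$ and of $(1+t)$ in each of the three regimes, and checking that the hypothesis $t\ge1$ in \eqref{p-2} is precisely what removes the additive constant present in \eqref{p-1}. No step requires anything beyond the $t/2$ split and the single inequality $x^a\e^{-x}\le C_a$; the only non-power trick is the logarithm bound $\ln(1+t)\le t$ invoked at $\varkappa=1$.
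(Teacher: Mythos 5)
Your proposal is correct and follows essentially the same route as the paper: split the integral at $\tau=t/2$, freeze the weight on $[t/2,t]$ to extract the factor $R^{-1}$, pull $\e^{-Rt/2}$ out of the far part, and convert the resulting powers (and logarithm, via $\ln(1+t)\le t$) into negative powers of $R$ using $\sup_{x\ge0}x^a\e^{-x}<\infty$, with the sharpened bound $\e^{-Rt/2}(1+t)^a\le CR^{-a}$ for $t\ge1$ accounting for the disappearance of the additive constants in \eqref{p-2}. The bookkeeping in all three regimes, including the integrability restriction $\varkappa>1$ needed for \eqref{p-3}, matches the paper's proof.
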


\begin{proof}
	To prove \eqref{p-1}, we divide $\int_{0}^{t}\e^{-R(t-\tau)}(1+\tau)^{-\frac{1}{\varkappa}} \dd \tau$ into two parts,
	\begin{equation}\label{p-p-1}\begin{split}
			&\int_{0}^{t}\e^{-R(t-\tau)}(1+\tau)^{-\frac{1}{\varkappa}} \dd \tau\\=&\int_{0}^{\frac{t}{2}}\e^{-R(t-\tau)}(1+\tau)^{-\frac{1}{\varkappa}} \dd \tau+\int_{\frac{t}{2}}^{t}\e^{-R(t-\tau)}(1+\tau)^{-\frac{1}{\varkappa}} \dd \tau.
		\end{split}
	\end{equation}
	The second part can be estimated as follows,
	\begin{align*}
		\int_{\frac{t}{2}}^{t}\e^{-R(t-\tau)}(1+\tau)^{-\frac{1}{\varkappa}} \dd \tau
		&\leq C(1+t)^{-\frac{1}{\varkappa}}\int_{\frac{t}{2}}^t \e^{-R(t-\tau)}\dd \tau \\ &\leq CR^{-1}(1+t)^{-\frac{1}{\varkappa}}.
	\end{align*}
	For the first part, we have
	\begin{align*}
		&\int_{0}^{\frac{t}{2}}\e^{-R(t-\tau)}(1+\tau)^{-\frac{1}{\varkappa}} \dd \tau\\
		\leq& \e^{-\frac{R}{2}t}\int_0^\frac{t}{2}(1+\tau)^{-\frac{1}{\varkappa}}\dd \tau\\ \leq& C\e^{-Rt}\times\begin{cases}
			\ln(1+t),&\varkappa=1,\\[2mm]
			(1+t)^{1-\frac{1}{\varkappa}},&\varkappa>1,\\[2mm]
			1,&\varkappa<1.
		\end{cases}\\ \leq&\begin{cases}
			C(1+t)^{-1}\left(R^{-1}+R^{-2}\right), & \varkappa=1,\\
			C(1+t)^{-\frac{1}{\varkappa}}\left(1+R^{-1}\right),&\varkappa>1,\\
			C(1+t)^{-\frac{1}{\varkappa}}\left(1+R^{-\frac{1}{\varkappa}}\right),&\varkappa<1.
		\end{cases}
	\end{align*}
	In the last inequality above, we have used the fact that, for any $m>0,$
	$$\sup_{t>0}\e^{-Rt}t^m\leq CR^{-m},$$ where $C$ is independent of $m.$
	Hence, putting these estimates together yields
	\begin{align*}
		\int_{0}^{t}\e^{-R(t-\tau)}(1+\tau)^{-\frac{1}{\varkappa}} \dd \tau\leq  \begin{cases}
			C(1+t)^{-1}\left(R^{-1}+R^{-2}\right), & \varkappa=1,\\[2mm]
			C(1+t)^{-\frac{1}{\varkappa}}\left(1+R^{-1}\right),&\varkappa>1,\\[2mm]
			C(1+t)^{-\frac{1}{\varkappa}}\left(1+R^{-\frac{1}{\varkappa}}+R^{-1}\right),&\varkappa<1,
		\end{cases}
	\end{align*} which is \eqref{p-1}.

	If  $t\geq 1$, the second term on the right of \eqref{p-p-1} can be  estimated as before. However, using the fact that for any $m>0,$
	$$\sup_{t\geq1}\e^{-Rt}(1+t)^m\leq CR^{-m},~~\text{$C$ being independent of $m$},$$ 
	the first term on the right of \eqref{p-p-1} can be replaced by the following
	\begin{align*}
		&\int_{0}^{\frac{t}{2}}\e^{-R(t-\tau)}(1+\tau)^{-\frac{1}{\varkappa}} \dd \tau \\ \leq&\begin{cases}
			C(1+t)^{-1}\left(R^{-1}+R^{-2}\right), & \varkappa=1,\\
			C(1+t)^{-\frac{1}{\varkappa}}R^{-1},&\varkappa>1,\\
			C(1+t)^{-\frac{1}{\varkappa}}R^{-\frac{1}{\varkappa}},&\varkappa<1.
		\end{cases}
	\end{align*}
	At the point, \eqref{p-2} can also be proved.
	
	The proof of \eqref{p-3} is similar to that of \eqref{p-1}. However, in order to guarantee the integrability of $\int_{0}^\frac{t}{2}\tau^{-\frac{1}{\varkappa}}\dd\tau $, an additional condition of $\varkappa>1$ needs to be imposed.
\end{proof}

At the end of this section, we give two lemmas about heat kernel estimates and  estimates for the operator  $K(D)$, respectively.

\begin{lemma}[\cite{miao-2008-heat}]\label{heatkernel}
Let $s\geq 0,~1\leq p\leq q \leq \infty.$ Then, there holds for any $f\in L^p(\mathbb{R}^n),$
	\begin{align*}
		\left\|\Lambda^s e^{\Delta t } f\right\|_{L^q\left(\mathbb{R}^n\right)} \leq C t^{-\frac{s}{2}-\frac{n}{2 }\left(\frac{1}{p}-\frac{1}{q}\right)}\|f\|_{L^p\left(\mathbb{R}^n\right)}.
	\end{align*}
\end{lemma}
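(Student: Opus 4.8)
The plan is to realize $\Lambda^s e^{t\Delta}$ as a convolution operator and then reduce everything to Young's inequality together with a scaling computation. Write $\Lambda^s e^{t\Delta} f = K_t * f$, where $K_t$ is the kernel whose Fourier transform is the multiplier symbol $|\xi|^s e^{-t|\xi|^2}$. The first step is to record the scaling identity, obtained by the substitution $\xi = \eta/\sqrt{t}$ in the inversion formula:
\[
K_t(x) = t^{-\frac{n+s}{2}} K_1\!\left(x/\sqrt{t}\right), \qquad \mathscr{F}(K_1)(\eta) = |\eta|^s e^{-|\eta|^2}.
\]
This isolates all the $t$-dependence into an explicit prefactor and a dilation of a fixed profile $K_1$.

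Next I would apply Young's convolution inequality with the exponent $r$ fixed by $\frac{1}{r} = 1 - \frac{1}{p} + \frac{1}{q}$; the hypothesis $p \leq q$ gives $\frac{1}{p} \geq \frac{1}{q}$, hence $\frac{1}{r} \in [0,1]$ and $r \in [1,\infty]$, so the inequality is legitimate. This produces $\|K_t * f\|_{L^q} \leq \|K_t\|_{L^r}\|f\|_{L^p}$. A further change of variables $y = x/\sqrt{t}$ in the rescaled kernel yields $\|K_t\|_{L^r} = t^{-\frac{n+s}{2} + \frac{n}{2r}}\|K_1\|_{L^r}$, and inserting the value of $\frac{1}{r}$ collapses the power of $t$ to precisely $-\frac{s}{2} - \frac{n}{2}\bigl(\frac{1}{p} - \frac{1}{q}\bigr)$, matching the claimed estimate with constant $C = \|K_1\|_{L^r}$.

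The one genuine point, and the main obstacle, is to verify that $\|K_1\|_{L^r}$ is finite for every $r \in [1,\infty]$. Near the origin this is automatic: since $|\eta|^s e^{-|\eta|^2} \in L^1(\mathbb{R}^n)$ (the factor $|\eta|^s$ is locally integrable for $s \geq 0$ and the Gaussian furnishes decay at infinity), $K_1$ is bounded and continuous. The delicate part is the decay of $K_1$ at spatial infinity, governed by the regularity of the symbol. The symbol is smooth and rapidly decreasing away from $\xi = 0$, and its only lack of smoothness is the homogeneous singularity $|\xi|^s$ at the origin. When $s$ is an even integer (in particular $s = 0$) the symbol is smooth and $K_1$ is a Schwartz function, so $\|K_1\|_{L^r} < \infty$ trivially. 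Otherwise, a standard homogeneity/singularity analysis—Fourier transforming the factor $|\xi|^s$, whose transform is the homogeneous distribution comparable to $|x|^{-n-s}$—shows $K_1(x) = O(|x|^{-n-s})$ as $|x| \to \infty$. Since $s > 0$ forces $n + s > n$, one has $(n+s)r > n$ for all $r \geq 1$, so $K_1 \in L^r(\mathbb{R}^n)$ throughout the range $[1,\infty]$. With this uniform integrability of the profile $K_1$ established, the scaling computation above delivers the stated bound.
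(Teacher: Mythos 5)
The paper does not prove this lemma at all: it is imported verbatim from \cite{miao-2008-heat}, so there is no in-paper argument to compare against --- your proposal supplies a proof where the paper only gives a citation. What you wrote is the standard self-contained argument, and it is essentially correct: the scaling identity $K_t(x)=t^{-\frac{n+s}{2}}K_1(x/\sqrt{t})$, Young's inequality with $1+\frac{1}{q}=\frac{1}{r}+\frac{1}{p}$ (legitimate since $p\le q$ forces $r\in[1,\infty]$), and the exponent count $-\frac{n+s}{2}+\frac{n}{2r}=-\frac{s}{2}-\frac{n}{2}\left(\frac{1}{p}-\frac{1}{q}\right)$ are all right, and you correctly isolate the only substantive issue, namely that $\|K_1\|_{L^r}<\infty$ for every $r\in[1,\infty]$, with the even-integer (in particular $s=0$) case dispatched by Schwartz-ness.

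The one place you are too quick is the decay claim $K_1(x)=O(|x|^{-n-s})$ for non-even $s$. Invoking ``the Fourier transform of $|\xi|^s$ is comparable to $|x|^{-n-s}$'' is only a heuristic here: $|\xi|^s$ is not integrable at infinity, and the naive repair by integration by parts fails to give the sharp rate --- $2N$-fold integration by parts requires $2N<n+s$ so that the differentiated symbol $\sim|\xi|^{s-2N}$ stays integrable at the origin, which yields only $|K_1(x)|\lesssim |x|^{-2N}$ with $2N<n+s$; for small $s$ (e.g.\ $n=2$, $s=\tfrac12$, so $2N=2$) this is not even enough for $K_1\in L^1(\mathbb{R}^2)$. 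The standard rigorous route is a dyadic decomposition of the symbol near the origin: writing $g_j(\xi)=\psi(2^j\xi)\,|\xi|^s e^{-|\xi|^2}$ with support in $|\xi|\sim 2^{-j}$, one has $|\mathscr{F}^{-1}g_j(x)|\le C_M\, 2^{-j(n+s)}\min\bigl\{1,(2^{-j}|x|)^{-M}\bigr\}$ for any $M>n+s$, and summing over $j\ge 0$ (the high-frequency remainder being Schwartz) gives $|K_1(x)|\le C(1+|x|)^{-n-s}$, hence $K_1\in L^r$ for all $r\in[1,\infty]$ once $s>0$. With that step filled in, your proof is complete and proves exactly the stated estimate.
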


\begin{lemma}\label{young+KDf}
	Let $1\leq r\leq 2\leq q\leq \infty, p\geq q'=\frac{q}{q-1}$ and $\frac{1}{p}+\frac{1}{q}=\frac{1}{r}$. Suppose $ \Omega\subset \mr^2$ is any subspace in the frequency space, then the following holds
		\begin{align*}
			\left\|\w{K(D)f}\right\|_{L^{q'}(\Omega)}\leq C\left\|\w{K}(\xi)\right\|_{L^{p}(\Omega)}\left\|f(x)\right\|_{L^{r}(\mr^2)}.
	\end{align*}
	\begin{proof}
		Since
		\begin{align*}
		1-\dfrac{1}{q}=\dfrac{1}{p}+1-\dfrac{1}{r} \Rightarrow \dfrac{1}{p}+\dfrac{1}{q}=\dfrac{1}{r}.
	\end{align*}
	Using  Lemma \ref{hausdorff-Young} and H\"{o}lder's inequality, we have
	\begin{equation*}\begin{split}
		\left\|\w{K(D)f}\right\|_{L^{q'}(\Omega)}&=\left\|\w{K}(\xi)\w{f}(\xi)\right\|_{L^{q'}(\Omega)}\\
		&\leq C\left\|\w{K}(\xi)\right\|_{L^{p}(\Omega)}\left\|\w{f}(\xi)\right\|_{L^{r'}(\Omega)}\\
		&\leq C\left\|\w{K}(\xi)\right\|_{L^{p}(\Omega)}\left\|\w{f}(\xi)\right\|_{L^{r'}(\mr^2)}\\
		&\leq C\left\|\w{K}(\xi)\right\|_{L^{p}(\Omega)}\left\|f(x)\right\|_{L^{r}(\mr^2)}.\end{split}
	\end{equation*}
	\end{proof}
\end{lemma}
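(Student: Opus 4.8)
The plan is to observe that $K(D)$ is by definition a Fourier multiplier, so that $\w{K(D)f}(\xi)=\w K(\xi)\,\w f(\xi)$ as a pointwise product in the frequency variable, and then to split the $L^{q'}(\Omega)$ norm of this product across its two factors by H\"older's inequality, finally disposing of the factor carrying $\w f$ by the Hausdorff--Young inequality (Lemma \ref{hausdorff-Young}). Thus the whole argument is a three-step chain: rewrite as a product, apply H\"older, then Hausdorff--Young.

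Concretely, I would first write $\|\w{K(D)f}\|_{L^{q'}(\Omega)}=\|\w K\,\w f\|_{L^{q'}(\Omega)}$. The heart of the proof is the exponent bookkeeping: I claim H\"older applies on $\Omega$ with the pair $(p,r')$, giving $\|\w K\,\w f\|_{L^{q'}(\Omega)}\le\|\w K\|_{L^p(\Omega)}\|\w f\|_{L^{r'}(\Omega)}$, because the hypothesis $\tfrac1p+\tfrac1q=\tfrac1r$ is equivalent to the H\"older-compatibility relation $\tfrac1{q'}=\tfrac1p+\tfrac1{r'}$ (subtract $\tfrac1q$ from $1=\tfrac1q+\tfrac1{q'}$ and insert $\tfrac1{r'}=1-\tfrac1r$). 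Next I would enlarge the domain in the second factor, using $\Omega\subset\mr^2$ to bound $\|\w f\|_{L^{r'}(\Omega)}\le\|\w f\|_{L^{r'}(\mr^2)}$ by monotonicity of the norm under domain restriction, while keeping the first factor on $\Omega$, which is where the admissible decay of the kernel symbol is measured. Finally, since $1\le r\le 2$ and $r'$ is its conjugate exponent, Lemma \ref{hausdorff-Young} yields $\|\w f\|_{L^{r'}(\mr^2)}\le C\|f\|_{L^r(\mr^2)}$, and composing the three estimates gives the assertion.

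The only genuinely delicate point is the exponent algebra: verifying that the stated relation among $p,q,r$ is precisely the H\"older-compatibility identity, and checking that the accompanying range constraints keep both tools in their valid regimes, namely $q\ge 2$ forcing $q'\le 2$, the hypothesis $p\ge q'$ ensuring $\tfrac1{r'}=\tfrac1{q'}-\tfrac1p\ge 0$ so that $r'$ is a legitimate exponent, and $1\le r\le 2$ so that Hausdorff--Young applies. Once this is settled, the subspace $\Omega$ contributes nothing beyond the trivial $L^{r'}$-norm monotonicity, and the remaining steps are immediate.
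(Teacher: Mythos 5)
Your proposal is correct and follows essentially the same route as the paper's own proof: rewrite $\w{K(D)f}$ as the pointwise product $\w{K}\,\w{f}$, apply H\"older's inequality on $\Omega$ with the pair $(p,r')$ via the identity $\tfrac{1}{q'}=\tfrac{1}{p}+\tfrac{1}{r'}$, enlarge the domain of the $\w{f}$ factor to $\mr^2$, and finish with the Hausdorff--Young inequality (Lemma \ref{hausdorff-Young}). Your explicit verification of the exponent ranges ($q'\leq 2$, $r'$ well-defined from $p\geq q'$, and $1\leq r\leq 2$ for Hausdorff--Young) is a slightly more careful bookkeeping than the paper records, but the argument is the same.
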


\section{Global well-posedness and $L^q~(q\geq 2)$ decay}

In this section, we are going to prove Theorem \ref{thm1}. The proof is divided into two subsections, we  will establish the global existence and uniqueness  of  solutions to \eqref{mhdwave}  in the first subsection. In the second subsection, we will prove the $L^q$ decay estimates for $q > 2$.

\subsection{Global existence and uniqueness }

The goal of this subsection is to prove global existence and uniqueness  of  solutions to \eqref{mhdwave}.
First of all, we derive the a priori estimates for solutions of \eqref{mhdwave} as follows.

\begin{lemma}\label{lem1}
 Let $m>0$ and $\gamma>0$. Suppose that $(u, b)$ is a solution of system \eqref{mhdwave},
then there exists a  constant $\varepsilon>0$ such that, if  
\begin{align*}
	\left\|(u_0,b_0,a_0)\right\|_{X^m(\mr^2)}\leq \varepsilon,
\end{align*}
then, the solution $(u,b)$ satisfies, for any $t>0$,
\begin{align}\nonumber
	&\|u(t)\|_{H^m}^2+\| b(t)\|_{H^m}^2+2\gamma^2  \|\partial_t  b(t)\|_{H^m}^2+2\gamma\|\nabla b(t)\|_{H^m}^2\\\nonumber
	&~~~~~~~+\int_0^t \|\nabla u(\tau)\|_{H^m}^2+\|\nabla b(\tau)\|_{H^m}^2+\gamma\|\partial_\tau b(\tau)\|_{H^m}^2 \dd\tau \\\label{pri}
	\leq& C\left(1+\gamma^{\frac{2m}{2m+2}}\right)\left\| (u_0,b_0,a_0)\right\|_{X^m(\mr^2)}^2.
\end{align}

\end{lemma}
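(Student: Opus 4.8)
The plan is to prove the a priori bound \eqref{pri} by a $\gamma$-weighted energy method applied directly to \eqref{mhdwave}, closed by a continuity argument. Because the equation for $b$ carries the hyperbolic term $\gamma\partial_{tt}b$, the naive energy $\|u\|_{H^m}^2+\|b\|_{H^m}^2$ is neither coercive nor dissipative, so I would instead test the two equations against three multipliers: dot the $u$-equation with $\Lambda^{2s}u$, the $b$-equation with $\Lambda^{2s}b$, and the $b$-equation with $2\gamma\Lambda^{2s}\partial_t b$, and add the results for $s=0$ and $s=m$ (equivalently, work at the level of the full $H^m$ norm). In the $b$-against-$\Lambda^{2s}b$ pairing the wave term gives $\gamma\langle\partial_{tt}b,b\rangle=\gamma\frac{d}{dt}\langle\partial_t b,b\rangle-\gamma\|\partial_t b\|^2$, while testing against $2\gamma\Lambda^{2s}\partial_t b$ turns $\gamma\partial_{tt}b$ into $\gamma^2\frac{d}{dt}\|\partial_t b\|^2$ and $-\Delta b$ into $\gamma\frac{d}{dt}\|\nabla b\|^2$ and produces the damping $2\gamma\|\partial_t b\|^2$; the two damping contributions $-\gamma\|\partial_t b\|^2+2\gamma\|\partial_t b\|^2$ combine into a clean positive $\gamma\|\partial_t b\|^2$. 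Summing, one arrives at a differential identity of the form $\frac{d}{dt}E+2D=(\text{nonlinear terms})$, where
\begin{align*}
E&=\|u\|_{H^m}^2+\|b\|_{H^m}^2+2\gamma^2\|\partial_t b\|_{H^m}^2+2\gamma\|\nabla b\|_{H^m}^2+2\gamma\langle\partial_t b,b\rangle_{H^m},\\
D&=\|\nabla u\|_{H^m}^2+\|\nabla b\|_{H^m}^2+\gamma\|\partial_t b\|_{H^m}^2,
\end{align*}
so that $E$ and $D$ are exactly the quantities appearing in \eqref{pri}.

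The decisive point is that the MHD null structure survives this procedure at top order. Adding the $u$-pairing and the $b$-against-$\Lambda^{2s}b$ pairing, the leading parts of the coupling terms recombine into $\int_{\mathbb{R}^2} b\cdot\nabla(\Lambda^s b_i\,\Lambda^s u_i)\,\mathrm{d}x=0$ by $\nabla\cdot b=0$, and the self-transport leading terms vanish in the same way; what is left are Kato--Ponce commutators $[\Lambda^s,u\cdot\nabla]u$, $[\Lambda^s,b\cdot\nabla]b$ and $[\Lambda^s,u\cdot\nabla]b$, which I would estimate with Lemma \ref{commutator} and the product inequality \eqref{mul}. The genuinely non-cancelling contributions are those created by the extra multiplier, $2\gamma\int_{\mathbb{R}^2}\Lambda^s(b\cdot\nabla u-u\cdot\nabla b)\cdot\Lambda^s\partial_t b\,\mathrm{d}x$; on these I would use Young's inequality to peel off $\tfrac{\gamma}{2}\|\Lambda^s\partial_t b\|_{L^2}^2$, absorbed by the $\gamma\|\partial_t b\|_{H^m}^2$ term of $D$, against a quadratic factor $C\gamma\|\Lambda^s(b\cdot\nabla u-u\cdot\nabla b)\|_{L^2}^2$. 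All nonlinear terms are then bounded, via \eqref{mul} and Sobolev embedding, by a function of $E$ times $D$; under the smallness hypothesis and a standard continuity argument this yields $E(t)+\int_0^t D\,\mathrm{d}\tau\le C\,E(0)$.

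Two further points need care. First, coercivity: completing the square shows that the magnetic block of $E$ is a perfect structure, $\|b\|_{H^m}^2+2\gamma^2\|\partial_t b\|_{H^m}^2+2\gamma\langle\partial_t b,b\rangle_{H^m}=\|b+\gamma\partial_t b\|_{H^m}^2+\gamma^2\|\partial_t b\|_{H^m}^2\ge0$, which gives $c_1\widetilde E\le E\le c_2\widetilde E$ with $c_1,c_2$ independent of $\gamma$, where $\widetilde E$ denotes the left side of \eqref{pri}; hence controlling $E$ controls the target. Second, and this is the technical heart, is the explicit power of $\gamma$. The dissipation provides instantaneous control of $u$ only through $\|\nabla u\|_{H^m}$, so no pointwise-in-time $H^{m+1}$ bound on $u$ is available; the only high-derivative quantity at hand is $\|\Lambda^{m+1}b\|_{L^2}\le C\gamma^{-1/2}E^{1/2}$, extracted from the $\gamma$-weighted term $2\gamma\|\nabla b\|_{H^m}^2$ in $E$. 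When $0<m<1$ the nonlinear and commutator terms cannot be closed by $\|\cdot\|_{H^m}$ alone, and I would interpolate them by Gagliardo--Nirenberg (Lemma \ref{gag}), e.g. $\|b\|_{L^\infty}\le C\|\Lambda^m b\|_{L^2}^{m}\|\Lambda^{m+1}b\|_{L^2}^{1-m}$; each use of $\|\Lambda^{m+1}b\|_{L^2}$ costs a negative power of $\gamma$, and balancing these against the explicit $\gamma$-weights carried by the multipliers is what generates the exponent $\tfrac{2m}{2m+2}$ in \eqref{pri} (note that, in the singular-limit regime $\gamma\le1$ of interest, the initial datum $E(0)=\|u_0\|_{H^m}^2+\|b_0+\gamma a_0\|_{H^m}^2+\gamma^2\|a_0\|_{H^m}^2+2\gamma\|\nabla b_0\|_{H^m}^2$ is itself bounded by $C(1+\gamma)\|(u_0,b_0,a_0)\|_{X^m}^2\le C(1+\gamma^{2m/(2m+2)})\|(u_0,b_0,a_0)\|_{X^m}^2$). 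The main obstacle I anticipate is precisely this dual demand on the $\gamma\partial_{tt}b$ term: one must design a single functional that simultaneously retains the exact MHD cancellation, produces coercive and dissipative control of all of $u$, $b$, $\partial_t b$ and $\nabla b$ uniformly in $\gamma$, and lets every non-cancelling nonlinearity be absorbed into the $\gamma$-weighted dissipation without destroying the claimed power of $\gamma$; reconciling the hyperbolic scaling of the wave part with the parabolic scaling of the dissipation inside one inequality is the crux, whereas the commutator and product estimates and the continuity scaffolding are routine.
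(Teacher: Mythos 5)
Your core argument is the paper's own proof, essentially verbatim: the paper also tests the $\Lambda^m$-differentiated equations with $\Lambda^m u$, $\Lambda^m b$ and $2\gamma\partial_t\Lambda^m b$, arrives at exactly your identity $\frac{\dd}{\dd t}E+2D=(\text{nonlinear})$ (their \eqref{3.3}, with $E=X_m+Y_m$ and $D=Z_m$), checks coercivity of the cross term $2\gamma\langle\partial_t b,b\rangle_{H^m}$ (they use Young's inequality where you complete the square --- both give $\gamma$-independent equivalence), and closes with the same continuity/bootstrap argument as in \eqref{e-1}. The differences are confined to the nonlinear bookkeeping. First, for $I_1,\dots,I_4$ you invoke the commutator cancellation and Lemma \ref{commutator}; the paper uses no cancellation at all: since both equations carry a full Laplacian, it simply writes the convection terms in divergence form, integrates by parts onto the test function, and applies \eqref{mul}, e.g.\ $|I_1|\le\|\Lambda^m(u\otimes u)\|_{L^2}\|\Lambda^{m+1}u\|_{L^2}$. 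So the ``decisive null structure'' you emphasize is not needed for this fully dissipative system (it is decisive for non-resistive MHD), and the paper's route also avoids the delicacy of the Kato--Ponce commutator \eqref{com} at fractional order $s=m<1$, where $\Lambda^{m-1}$ of the transported field appears. Second, on $2\gamma(J_1+J_2)$ you apply Young's inequality, which squares the nonlinearity into $C\gamma\|\Lambda^{m+1}(u\otimes b)\|_{L^2}^2$, whereas the paper keeps the trilinear product and distributes exponents so that $Z_m$ appears exactly to the first power, at cost $\gamma^{m/(2m+2)}X^{1/2}Z_m$. Your route does close, but only with the splitting you gesture at: one must divide $\gamma\|\Lambda^{m+1}b\|_{L^2}^2$ between $E$ (through $2\gamma\|\nabla b\|_{H^m}^2$) and $D$ with exponents $m/(m+1)$ and $1/(m+1)$, otherwise one produces $D^{(m+2)/(m+1)}$ or $D^{2/(m+1)}$, which the bootstrap cannot absorb; done correctly this yields $C\gamma^{2m/(2m+2)}E\,D$ --- quadratic in $E$ and with double the paper's intermediate $\gamma$-exponent, but still closable under smallness. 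Spelling out that splitting is the one step of your sketch that is not routine.

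The one place your proposal visibly falls short of the statement is the restriction to $\gamma\le 1$: the lemma asserts \eqref{pri} for every $\gamma>0$, while your final bookkeeping $E(0)\le C(1+\gamma)\|(u_0,b_0,a_0)\|_{X^m}^2\le C(1+\gamma^{2m/(2m+2)})\|(u_0,b_0,a_0)\|_{X^m}^2$ is false for $\gamma>1$, since $E(0)$ contains $2\gamma^2\|a_0\|_{H^m}^2$ and $\tfrac{2m}{2m+2}<2$. You should be aware, however, that the paper's own proof never justifies this passage either: its bootstrap gives $E(t)\le CE(0)\le C(1+\gamma^2)\|(u_0,b_0,a_0)\|_{X^m}^2$, and since the left-hand side of \eqref{pri} at $t\to0^+$ \emph{equals} $E(0)$, the stated power $\gamma^{2m/(2m+2)}$ cannot hold for large $\gamma$ with a $\gamma$-independent constant (take $u_0=b_0=0$, $a_0\neq0$). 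So your explicit restriction is the honest reading; to match the lemma as written you would either need to track the factor $1+\gamma^2$ or note that the statement itself requires $\gamma$ bounded.
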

\begin{proof}
	Applying the operator $\Lambda^m ~(m>0)$ to both sides of equations $\eqref{mhdwave}_1$ and $\eqref{mhdwave}_2$, taking the inner product of the resulting equations with $\Lambda^m u$ and $\Lambda^m b$ respectively, and summing them up give to
\begin{equation}\label{3.1}
	\begin{split}
	&\dfrac{1}{2}\dfrac{\dd}{\dd t}\left(\|\Lambda^m u\|_{L^2}^2+\|\Lambda^m b\|_{L^2}^2\right)+\|\Lambda^{m+1}u\|_{L^2}^2+\|\Lambda^{m+1}b\|_{L^2}^2+\gamma\int_{\mathbb{R}^2}\Lambda^m b\cdot \partial_{tt} \Lambda^m b\dd x\\
	=&-\int_{\mathbb{R}^2}\Lambda^m\left(u\cdot\nabla u\right)\cdot\Lambda^m u\dd x+\int_{\mathbb{R}^2}\Lambda^m\left(b\cdot\nabla b\right)\cdot\Lambda^m u\dd x\\
	&~~~~+\int_{\mathbb{R}^2}\Lambda^m\left(b\cdot\nabla u\right)\cdot\Lambda^m b\dd x-\int_{\mathbb{R}^2}\Lambda^m\left(u\cdot\nabla b\right)\cdot\Lambda^m b\dd x\\
	\stackrel{\mathrm{def}}{=}& I_1+I_2+I_3+I_4.
\end{split}
\end{equation}
In order to deal with the bad term $\gamma\int_{\mathbb{R}^2}\Lambda^m b\cdot \partial_{tt} \Lambda^m b\dd x,$ 
we first get that by some basic calculations,
\begin{align*}
	\int_{\mathbb{R}^2}\Lambda^m b\cdot \partial_{tt} \Lambda^m b\dd x=\partial_t\int_{\mathbb{R}^2}\Lambda^m b\cdot \partial_t\Lambda^m b\dd x-\int_{\mathbb{R}^2}\left|\partial_t \Lambda^m b\right|^2\dd x.
\end{align*}
Then, 	applying the operator $\Lambda^m$ to $\eqref{mhdwave}_2$ and
multiplying the resulting equation  by $\partial_t\Lambda^m b$ yield
\begin{equation}\label{3.2}
	\begin{split}
	&\dfrac{1}{2}\dfrac{\dd}{\dd t}\left(\gamma \|\partial_t \Lambda^m b\|_{L^2}^2\right)+\|\partial_t\Lambda^m b\|_{L^2}^2+\dfrac{1}{2}\dfrac{\dd}{\dd t}\|\Lambda^{m+1}b\|_{L^2}^2\\
	=&\int_{\mathbb{R}^2}\Lambda^m(b\cdot\nabla u)\cdot\partial_t \Lambda^m b\dd x-\int_{\mathbb{R}^2}\Lambda^m(u\cdot\nabla b)\cdot\partial_t \Lambda^m b\dd x\\
	\stackrel{\mathrm{def}}{=}& J_1+J_2.
\end{split}
\end{equation}
By multiplying \eqref{3.2} by $2\gamma$ and adding it to \eqref{3.1}, we obtain
\begin{equation}\begin{split}\label{3.3}
&	\dfrac{1}{2}\dfrac{\dd}{\dd t}\left(\|\Lambda^m u\|_{L^2}^2+\|\Lambda^m b\|_{L^2}^2+2\gamma^2  \|\partial_t \Lambda^m b\|_{L^2}^2+2\gamma\|\Lambda^{m+1}b\|_{L^2}^2+2\gamma\int_{\mathbb{R}^2}\partial_t\Lambda^m b\cdot\Lambda^m b\dd x\right)\\
	&~~~~~~~~~~~~~~~~~~~~~+\|\Lambda^{m+1}u\|_{L^2}^2+\|\Lambda^{m+1}b\|_{L^2}^2+\gamma\|\partial_t \Lambda^mb\|_{L^2}^2\\=&2\gamma J_1+2\gamma J_2+I_1+I_2+I_3+I_4.\end{split}
\end{equation}
The terms $J_1$ and $J_2$ can be bounded  as follows, by  H\"{o}lder's inequality and \eqref{mul},
\begin{equation*}\begin{split}
	2\gamma J_1+2\gamma J_2&\leq 2\gamma \|\partial_t \Lambda^m b\|_{L^2}\|\Lambda^{m+1}(b\otimes u)\|_{L^2}\\
	&\leq 2\gamma \|\partial_t \Lambda^m b\|_{L^2}\left(\|\Lambda^{m+1} u\|_{L^2}\|b\|_{L^\infty}+\|\Lambda^{m+1} b\|_{L^2}\|u\|_{L^\infty}\right).
\end{split}
\end{equation*}
Similarly,
\begin{align*}
	&	I_1+I_2+I_3+I_4\\ \leq& C\left(\|\Lambda^m u\|_{L^2}+\|\Lambda^m b\|_{L^2}\right)\left(\|\Lambda^{m+1}u\|_{L^2}+\|\Lambda^{m+1} b\|_{L^2}\right)\left(\|u\|_{L^\infty}+\| b\|_{L^\infty}\right).
\end{align*}
By utilizing Lemma \ref{gag}, we  derive
\begin{align}\label{3.4}
	\|u\|_{L^\infty}\leq C\|u\|_{L^2}^\frac{m}{m+1}\|\Lambda^{m+1} u\|_{L^2}^\frac{1}{m+1},~~
	\|\Lambda^m u\|_{L^2}\leq C\|\Lambda^{m+1}u\|_{L^2}^\frac{m}{m+1}\|u\|_{L^2}^\frac{1}{m+1}.
\end{align}
For the sake of simplicity in the following discussion, we denote
\begin{align*}
	X_m(t)&=\|\Lambda^m u\|_{L^2}^2+\|\Lambda^m b\|_{L^2}^2+2\gamma^2  \|\partial_t \Lambda^m b\|_{L^2}^2+2\gamma\|\Lambda^{m+1}b\|_{L^2}^2,\\
	Y_m(t)&=2\gamma\int_{\mathbb{R}^2}\partial_t\Lambda^m b\cdot\Lambda^m b\dd x,\\
	Z_m(t)&=\|\Lambda^{m+1}u\|_{L^2}^2+\|\Lambda^{m+1}b\|_{L^2}^2+\gamma\|\partial_t \Lambda^mb\|_{L^2}^2.
\end{align*}
Therefore, using \eqref{3.4} and Young's inequality, it follows that
\begin{align*}
	I_1+I_2+I_3+I_4&\leq C\left(\|\Lambda^{m+1}u\|_{L^2}^2+\|\Lambda^{m+1} b\|_{L^2}^2\right)\left(\|u\|_{L^2}+\| b\|_{L^2}\right)\\
	&\leq CZ_m(t)\left(\|u\|_{L^2}+\| b\|_{L^2}\right),
\end{align*}
and
\begin{align*}
	&2\gamma\left(J_1+J_2\right)\\
	\leq &C\gamma \|\partial_t\Lambda^m b\|_{L^2}\left(\|\Lambda^{m+1}u\|_{L^2}+\|\Lambda^{m+1} b\|_{L^2}\right)\\
	&~~~~~~~~~~\left(\|u\|_{L^2}^\frac{m}{m+1}\|\Lambda^{m+1} u\|_{L^2}^\frac{1}{m+1}+\|b\|_{L^2}^\frac{m}{m+1}\|\Lambda^{m+1} b\|_{L^2}^\frac{1}{m+1}\right)\\
	\leq &C\gamma^{\frac{m+2}{2m+2}} \gamma^{\frac{m}{2m+2}}  \|\partial_t\Lambda^m b\|_{L^2}^\frac{1}{m+1}\|\partial_t\Lambda^m b\|_{L^2}^\frac{m}{m+1}\\
	&~~~~~~~~~~\left(\|u\|_{L^2}^\frac{m}{m+1}\|\Lambda^{m+1} u\|_{L^2}^\frac{1}{m+1}+\|b\|_{L^2}^\frac{m}{m+1}\|\Lambda^{m+1} b\|_{L^2}^\frac{1}{m+1}\right)\left(\|\Lambda^{m+1}u\|_{L^2}+\|\Lambda^{m+1} b\|_{L^2}\right)\\
	\leq &C\gamma^{\frac{m+2}{2m+2}}  Z_m(t)\|\partial_t\Lambda^m b\|_{L^2}^\frac{1}{m+1}\left(\|u\|_{L^2}^\frac{m}{m+1}+\|b\|_{L^2}^\frac{m}{m+1}\right).
\end{align*}
Substituting the above estimates into \eqref{3.3} and integrating both sides of the resulting inequality from 0 to $t$ yield
\begin{align}\nonumber
	&X_m(t)+Y_m(t)+2\int_{0}^{t}Z_m(\tau)\dd \tau\\\nonumber
	\leq &	X_m(0)+Y_m(0)+C\int_{0}^{t}Z_m(\tau)\times\\\label{3.5}
	&~~~~\left[\left(\|u(\tau)\|_{L^2}+\| b(\tau)\|_{L^2}\right)+\gamma^{\frac{m+2}{2m+2}}\|\partial_\tau\Lambda^m b(\tau)\|_{L^2}^\frac{1}{m+1}\left(\|u(\tau)\|_{L^2}^\frac{m}{m+1}+\|b(\tau)\|_{L^2}^\frac{m}{m+1}\right)\right]\dd \tau.
\end{align}
Since  utilizing  H\"{o}lder's inequality and  Young's inequality gives to
\begin{align*}
	|Y_m(t)|\leq	2\gamma\int_{\mathbb{R}^2}|\partial_t\Lambda^m b\cdot\Lambda^m b|\dd x \leq \dfrac{2\|\Lambda^m b\|_{L^2}^2}{3}+\dfrac{3\gamma^2\|\partial_t \Lambda^m b\|_{L^2}^2}{2},
\end{align*}
and 
\begin{equation*}\begin{split}
	Y_m(0)
	&\leq 2\gamma\|\partial_t\Lambda^m b_0\|_{L^2}\|\Lambda^m b_0\|_{L^2}\\
	&\leq \sqrt{2}\gamma^2\|\partial_t\Lambda^m b_0\|_{L^2}^2+\dfrac{1}{\sqrt{2}}\|\Lambda^m b_0\|_{L^2}^2\\
	&\leq \dfrac{1}{\sqrt{2}}X_m(0)\leq X_m(0),
\end{split}\end{equation*}
we can derive 
\begin{equation*}\begin{split}
	&X_m(t)+Y_m(t)\\
	\geq &\|\Lambda^m u\|_{L^2}^2+\|\Lambda^m b\|_{L^2}^2+2\gamma^2  \|\partial_t \Lambda^m b\|_{L^2}^2+2\gamma\|\Lambda^{m+1}b\|_{L^2}^2\\&~~~~~~~~~~~~~~~~~~~~~~~~~~~~~~~~~~~~~~~-\left(\dfrac{2\|\Lambda^m b\|_{L^2}^2}{3}+\dfrac{3\gamma^2\|\partial_t \Lambda^m b\|_{L^2}^2}{2}\right)\\
	\geq &\dfrac{1}{3}X_m(t).\end{split}
\end{equation*}
This together with  \eqref{3.5} yields
\begin{align*}
	&X_m(t)+6\int_{0}^{t}Z_m(\tau)\dd \tau\\
	\leq &6X_m(0)+C\int_{0}^{t}Z_m(\tau)\times\\
	&~~~~\left[\left(\|u(\tau)\|_{L^2}+\| b(\tau)\|_{L^2}\right)+\gamma^{\frac{m+2}{2m+2}}\|\partial_\tau\Lambda^m b(\tau)\|_{L^2}^\frac{1}{m+1}\left(\|u(\tau)\|_{L^2}^\frac{m}{m+1}+\|b(\tau)\|_{L^2}^\frac{m}{m+1}\right)\right]\dd \tau.
\end{align*}
It is easy to verify that the above inequality  also holds when  $m=0$. 
Hence, if we  introduce two new notations
\begin{align*}
	X(t)&=\|u\|_{H^m}^2+\| b\|_{H^m}^2+2\gamma^2  \|\partial_t  b\|_{H^m}^2+2\gamma\|\nabla b\|_{H^m}^2,\\[2mm]
	Z(t)&=\|\nabla u\|_{H^m}^2+\|\nabla b\|_{H^m}^2+\gamma\|\partial_t b\|_{H^m}^2,
\end{align*}
then, we obtain 
\begin{equation}\label{e-1}
	\begin{split}
		&X(t)+6\int_{0}^{t}Z(\tau)\dd \tau\\
		\leq& CX(0)+C(1+\gamma^{\frac{m}{2m+2}})\int_{0}^{t}Z(\tau)X^\frac{1}{2}(\tau)\dd\tau.
\end{split}\end{equation}
Set
\begin{align*}
	E(t)=\sup_{\tau\in[0,t]}\left(X(\tau)+6\int_{0}^\tau Z(s)\dd \tau\right),
\end{align*}
\eqref{e-1} implies that
\begin{align*}
	E(t)\leq CE(0)+C(1+\gamma^{\frac{m}{2m+2}})E(t)^{\frac{3}{2}}.
\end{align*}
By employing the bootstrapping argument, we can find a small positive number $\varepsilon$ satisfying  \eqref{1.1}, i.e.,
\begin{align*}
	\left\| (u_0,b_0,a_0)\right\|_{X^m(\mr^2)}=	\|u_0\|_{H^m}+\|b_0\|_{H^{m+1}}+\|a_0\|_{H^m}\leq \varepsilon.
\end{align*}
Here \begin{align*}
	\varepsilon<\dfrac{1}{C\left(1+\gamma^{\frac{m}{2m+2}}\right)^{2}},
\end{align*} and the constant $C$ is independent of $\gamma$.
Then it holds
\begin{align*}
	E(t)\leq C\left(1+\gamma^{\frac{2m}{2m+2}}\right)\left\| (u_0,b_0,a_0)\right\|_{X^m(\mr^2)}^2.
\end{align*}
At this point, we have obtained \eqref{pri}.

\end{proof}

\begin{proof}[Proof of the Theorem \ref{thm1}]
	
{\bf  (global existence)} 

Via the analysis in \cite{wave-2021-jde-fouriersobolev}, we are able to get the  local existence result. Then, the global existence of solution to system \eqref{mhdwave} follows from the local existence and
the a priori estimates  established in  Lemma \ref{lem1} via standard continuity argument.  We omit the details here for brevity.

{\bf  (uniqueness)}

Let $(u^1,b^1)$ and $(u^2,b^2)$ be two solutions of system  \eqref{mhdwave} in the regularity class of \eqref{th1-1}. It's easy to compute  that  their difference $(\delta u,\delta b)$ and $\delta p$
\begin{align*}
	\delta u=u^1-u^2, ~\delta b=b^1-b^2, ~\delta p=p^1-p^2
\end{align*}
satisfying
\begin{equation}
	\begin{split}\label{u-1}
	\begin{cases}
			\partial_t \delta u-\Delta \delta u+u^1 \cdot \nabla \delta u+\delta u\cdot\nabla u^2+\nabla \delta p=b^1\cdot \nabla \delta b+ \delta b\cdot\nabla b^2,~ x\in\mathbb{R}^2,t>0, \\[1mm]
		\gamma \partial_{tt}\delta b+	\partial_t \delta b-\Delta \delta b+u^1\cdot \nabla \delta b+\delta u\cdot\nabla b^2=b^1\cdot\nabla \delta u+\delta b\cdot\nabla u^2, \\[1mm]
		\nabla\cdot \delta u=\nabla \cdot \delta b=0, \\[1mm]
		\delta u(x, 0)=0,\,\, (\partial_t \delta b)(x,0)=\delta b(x, 0)=0.
	\end{cases}\end{split}
\end{equation}
By taking the $L^2$-inner product to  \eqref{u-1}$_1$ and \eqref{u-1}$_2$ with  $\delta u$ and $\delta b$, respectively, we get
	\begin{align}\nonumber
	&\dfrac{1}{2}\dfrac{\dd}{\dd t}\left(\|  \delta u\|_{L^2}^2+\|  \delta b\|_{L^2}^2\right)+\|\nabla\delta u\|_{L^2}^2+\|\nabla\delta b\|_{L^2}^2+\gamma\int_{\mathbb{R}^2}  \delta b\cdot \partial_{tt}   \delta b\dd x\\\nonumber
	=&-\int_{\mathbb{R}^2} \left(\delta u\cdot\nabla u^2\right)\cdot  \delta u\dd x+\int_{\mathbb{R}^2} \left(\delta b\cdot\nabla b^2\right)\cdot  \delta u\dd x\\\label{u.2}
	&~~~~+\int_{\mathbb{R}^2} \left(\delta b\cdot\nabla u^2\right)\cdot  \delta b\dd x-\int_{\mathbb{R}^2} \left(\delta u\cdot\nabla b^2\right)\cdot  \delta b\dd x\\ \stackrel{\mathrm{def}}{=}& \sum_{i=1}^{4} I_i.
\end{align}
By computing the $L^2$-inner product of $\eqref{u-1}_2$ with respect to the time derivative term $\partial_t \delta b,$ we have
\begin{equation}\label{u.3}\begin{split}
&\dfrac{1}{2}\dfrac{\dd}{\dd t}\left(\gamma \|\partial_t   \delta b\|_{L^2}^2\right)+\|\partial_t  \delta b\|_{L^2}^2+\dfrac{1}{2}\dfrac{\dd}{\dd t}\|\nabla \delta b\|_{L^2}^2\\
	=&\int_{\mathbb{R}^2} (b^1\cdot\nabla \delta u)\cdot\partial_t   \delta b\dd x-\int_{\mathbb{R}^2} (u^1\cdot\nabla \delta b)\cdot\partial_t   \delta b\dd x\\
	&~~~~+\int_{\mathbb{R}^2} (\delta b\cdot\nabla u^2)\cdot\partial_t   \delta b\dd x-\int_{\mathbb{R}^2} (\delta u\cdot\nabla b^2)\cdot\partial_t   \delta b\dd x\\
	\stackrel{\mathrm{def}}{=}& \sum_{j=1}^{4}J_j.
\end{split}\end{equation}
In a similar fashion to get \eqref{3.3}, we have
\begin{equation}\begin{split}\label{3.8}
&	\dfrac{1}{2}\dfrac{\dd}{\dd t}\left(\|  \delta u\|_{L^2}^2+\|  \delta b\|_{L^2}^2+2\gamma^2  \|\partial_t   \delta b\|_{L^2}^2+2\gamma\| \nabla\delta b\|_{L^2}^2+2\gamma\int_{\mathbb{R}^2}\partial_t  \delta b\cdot  \delta b\dd x\right)\\
	&~~~~~~+\| \nabla\delta u\|_{L^2}^2+\| \nabla\delta b\|_{L^2}^2+\gamma\|\partial_t  \delta b\|_{L^2}^2\\=&\sum_{i=1}^{4} I_i+2\gamma \sum_{j=1}^{4}J_j.\end{split}
\end{equation}
Using H\"{o}lder's inequality and Lemma \ref{gag}, we can obtain 
\begin{align*}
	I_1&\leq \left|\int_{\mathbb{R}^2} \left(\delta u\cdot\nabla u^2\right)\cdot  \delta u\dd x\right|\\
	&\leq \left\| \nabla u^2\right\|_{L^2}\left\| \delta u\right\|_{L^4}^2\\ &\leq C\left\| \nabla u^2\right\|_{L^2}\left\| \delta u\right\|_{L^2}\left\|\nabla \delta u\right\|_{L^2}\\
	&\leq \frac{1}{16}\left\|\nabla \delta u\right\|_{L^2}^2+C\left\| \nabla u^2\right\|_{L^2}^2\left\| \delta u\right\|_{L^2}^2.
\end{align*}
In a similar way, we obtain
\begin{align*}
	 \sum_{i=2}^{4} I_i\leq \dfrac{3}{16}\left(\left\|\nabla \delta u\right\|_{L^2}^2+\left\|\nabla \delta b\right\|_{L^2}^2\right)+C\left(\left\| \nabla u^2\right\|_{L^2}^2+\left\| \nabla b^2\right\|_{L^2}^2\right)\left(\left\| \delta u\right\|_{L^2}^2+\left\| \delta b\right\|_{L^2}^2\right),
\end{align*}
and
\begin{align*}
	\left|J_1\right|+\left|J_2\right|&\leq \left|\int_{\mathbb{R}^2} (b^1\cdot\nabla \delta u)\cdot\partial_t   \delta b\dd x\right|+\left|\int_{\mathbb{R}^2} (u^1\cdot\nabla \delta b)\cdot\partial_t   \delta b\dd x\right|\\
	&\leq C \left\| \partial_t \delta b\right\|_{L^2}\left(\left\| b^1\right\|_{L^\infty}\left\|\nabla \delta u\right\|_{L^2}+\left\|u^1\right\|_{L^\infty}\left\|\nabla\delta b\right\|_{L^2}\right)\\
	&\leq \dfrac{1}{16\gamma}\left(\left\|\nabla\delta b\right\|_{L^2}^2+\left\|\nabla\delta u\right\|_{L^2}^2\right)+C\gamma\left\| \partial_t \delta b\right\|_{L^2}^2\left(\left\| b^1\right\|_{L^\infty}^2+\left\| u^1\right\|_{L^\infty}^2\right).
\end{align*}
On one hand, for the case when $0<m<1$, using  H\"{o}lder's inequality, Young's inequality and  the Sobolev embedding yields that 
\begin{equation*}\begin{split}
		\left|J_3\right|+\left|J_4\right|&\leq \left|\int_{\mathbb{R}^2} (\delta b\cdot\nabla u^2)\cdot\partial_t   \delta b\dd x\right|+\left|\int_{\mathbb{R}^2} (\delta u\cdot\nabla b^2)\cdot\partial_t   \delta b\dd x\right|\\
		&\leq C \left\| \partial_t \delta b\right\|_{L^2}\left(\left\| \nabla b^2\right\|_{L^\frac{2}{1-m}}\left\| \delta u\right\|_{L^\frac{2}{m}}+\left\|\nabla u^2\right\|_{L^\frac{2}{1-m}}\left\|\delta b\right\|_{L^\frac{2}{m}}\right)\\
		&\leq C \left\| \partial_t \delta b\right\|_{L^2}\left(\left\| \nabla b^2\right\|_{H^m}\left\| \delta u\right\|_{L^2}^m\left\| \nabla\delta u\right\|_{L^2}^{1-m}+\left\|\nabla u^2\right\|_{H^m}\left\|\delta b\right\|_{L^2}^m\left\| \nabla\delta b\right\|_{L^2}^{1-m}\right)\\
		&\leq \dfrac{1}{16\gamma^{1-m}} \left\| \partial_t \delta b\right\|_{L^2}^{2m}\left(\left\| \nabla\delta b\right\|_{L^2}^{2-2m}+\left\| \nabla\delta u\right\|_{L^2}^{2-2m}\right)\\
		&~~~~~~+C\gamma^{1-m}\left\| \partial_t \delta b\right\|_{L^2}^{2-2m}\left(\left\| \nabla b^2\right\|_{H^m}^2\left\| \delta u\right\|_{L^2}^{2m}+\left\|\nabla u^2\right\|_{H^m}^2\left\|\delta b\right\|_{L^2}^{2m}\right)\\
		&\leq \dfrac{1}{16\gamma} \left(\gamma \left\| \partial_t \delta b\right\|_{L^2}^{2}+\left\| \nabla\delta b\right\|_{L^2}^{2}+\left\| \nabla\delta u\right\|_{L^2}^{2}\right)\\
			&~~~~~~+C\frac{1}{\gamma^{1-m}}\left(\left\| \nabla b^2\right\|_{H^m}^2+\left\|\nabla u^2\right\|_{H^m}^2\right)\left(2\gamma^2\left\| \partial_t \delta b\right\|_{L^2}^{2}+\left\| \delta b\right\|_{L^2}^{2}+\left\|\delta u\right\|_{L^2}^{2}\right).\end{split}
\end{equation*}
On the other hand, for the case when $m\geq1$, it follows that by using  H\"{o}lder's inequality, Young's inequality and  the Sobolev embedding again
\begin{align*}
	\left|J_3\right|+\left|J_4\right|&\leq \left|\int_{\mathbb{R}^2} (\delta b\cdot\nabla u^2)\cdot\partial_t   \delta b\dd x\right|+\left|\int_{\mathbb{R}^2} (\delta u\cdot\nabla b^2)\cdot\partial_t   \delta b\dd x\right|\\
	&\leq C \left\| \partial_t \delta b\right\|_{L^2}\left(\left\| \nabla b^2\right\|_{L^4}\left\| \delta u\right\|_{L^4}+\left\|\nabla u^2\right\|_{L^4}\left\|\delta b\right\|_{L^4}\right)\\
	&\leq C \left\| \partial_t \delta b\right\|_{L^2}\left(\left\| \nabla b^2\right\|_{H^m}\left\| \delta u\right\|_{L^2}^\frac{1}{2}\left\| \nabla\delta u\right\|_{L^2}^{\frac{1}{2}}+\left\|\nabla u^2\right\|_{H^m}\left\|\delta b\right\|_{L^2}^\frac{1}{2}\left\| \nabla\delta b\right\|_{L^2}^\frac{1}{2}\right)\\
	&\leq \dfrac{1}{16\gamma} \left(\gamma \left\| \partial_t \delta b\right\|_{L^2}^{2}+\left\| \nabla\delta b\right\|_{L^2}^{2}+\left\| \nabla\delta u\right\|_{L^2}^{2}\right)\\
	&~~~~~~+C\gamma^{-\frac{1}{2}}\left(\left\| \nabla b^2\right\|_{H^m}^2+\left\|\nabla u^2\right\|_{H^m}^2\right)\left(2\gamma^2\left\| \partial_t \delta b\right\|_{L^2}^{2}+\left\| \delta b\right\|_{L^2}^{2}+\left\|\delta u\right\|_{L^2}^{2}\right).
\end{align*}
Therefore, summarizing the above estimates, one can get
\begin{align*}
 \sum_{j=1}^{4}J_j&\leq \dfrac{1}{8\gamma}\left(\gamma\left\| \partial_t \delta b\right\|_{L^2}^{2}+\left\| \nabla\delta b\right\|_{L^2}^{2}+\left\| \nabla\delta u\right\|_{L^2}^{2}\right)+C\left(\gamma^{-1}+\gamma^{-1+m}+\gamma^{-\frac{1}{2}}\right)\\
 &~~~~~\left(\left\|(b^1,b^2)\right\|_{H^{m+1}}^2+\left\|(u^1, u^2)\right\|_{H^{m+1}}^2\right)\left(2\gamma^2\left\| \partial_t \delta b\right\|_{L^2}^{2}+\left\| \delta b\right\|_{L^2}^{2}+\left\|\delta u\right\|_{L^2}^{2}\right).
\end{align*}
By plugging  the above estimates into \eqref{3.8} and integrating with respect to $t$, we obtain
\begin{align*}
&\|\delta u\|_{L^2}^2+\|  \delta b\|_{L^2}^2+2\gamma^2  \|\partial_t   \delta b\|_{L^2}^2+2\gamma\| \nabla\delta b\|_{L^2}^2+2\gamma\int_{\mathbb{R}^2}\partial_t  \delta b\cdot  \delta b\dd x\\&~~~~~~~~~+\int_0^t\left(\| \nabla\delta u(\tau)\|_{L^2}^2+\| \nabla\delta b(\tau)\|_{L^2}^2+\gamma\|\partial_t  \delta b(\tau)\|_{L^2}^2\right)\dd t\\
	&\leq C\left(1+\gamma^\frac{1}{2}+\gamma^m\right)\int_0^t\left(\left\| (b^1,b^2)(\tau)\right\|_{H^{m+1}}^2+\left\| (u^1, u^2)(\tau)\right\|_{H^{m+1}}^2\right)\\&~~~~~~~~~~~~~~~~~~~~~~~~~~~~~~~~~\times\left(2\gamma^2\left\| \partial_t \delta b(\tau)\right\|_{L^2}^{2}+\left\| \delta b(\tau)\right\|_{L^2}^{2}+\left\|\delta u(\tau)\right\|_{L^2}^{2}\right)\dd \tau.
\end{align*}
The term  $2\gamma\int_{\mathbb{R}^2}\partial_t \delta b\cdot \delta b\dd x$ on the left side of the above inequality can be estimated as 
\begin{align*}
	2\gamma\int_{\mathbb{R}^2}\partial_t \delta b\cdot \delta b\dd x &\leq \dfrac{2\| \delta b\|_{L^2}^2}{3}+\dfrac{3\gamma^2\|\partial_t \delta b\|_{L^2}^2}{2}. 
\end{align*}
Then,
adopting to a  similar way in the proof of existence, we can deduce 
\begin{align*}
	&\|\delta u\|_{L^2}^2+\|  \delta b\|_{L^2}^2+2\gamma^2  \|\partial_t   \delta b\|_{L^2}^2\\
	\leq& C\left(1+\gamma^\frac{1}{2}+\gamma^m\right)\int_0^t\left(\left\| (b^1,b^2)(\tau)\right\|_{H^{m+1}}^2+\left\| (u^1, u^2)(\tau)\right\|_{H^{m+1}}^2\right)\\&~~~~~~~~~~~~~~~~~~~~~~~~~~~~~~~~~\times\left(2\gamma^2\left\| \partial_t \delta b(\tau)\right\|_{L^2}^{2}+\left\| \delta b(\tau)\right\|_{L^2}^{2}+\left\|\delta u(\tau)\right\|_{L^2}^{2}\right)\dd \tau.
\end{align*}
Since $(u^1, b^1)$ and $(u^2, b^2)$ are in the regularity class \eqref{th1-1}, we know that for any $t>0$, 
\begin{align*}
\int_{0}^t\left(\left\| (b^1,b^2)(\tau)\right\|_{H^{m+1}}^2+\left\| (u^1, u^2)(\tau)\right\|_{H^{m+1}}^2\right) \dd\tau \leq C(t)<\infty.
\end{align*}
By using  Gronwall's inequality, we obtain
\begin{align*}
	\delta u=\delta b= 0. 
\end{align*}
At this stage, we have proved the uniqueness of the solution in Theorem \ref{thm1}.
\end{proof}

\subsection{The decay estimates of $(u,b)$ in $L^q\times L^q$, for $q\geq 2$ }

\begin{proof}[Proof of the Theorem \ref{thm1}]
	{\bf  ($L^q$ decay estimates)} 
To prove the decay estimates of $(u,b)$, we will use continuity argument
(see, e.g. Tao (\cite{tao}, p20)).

Assume that 
\begin{align*}
	\left\| (u_0,b_0,a_0)\right\|_{X^m(\mr^2)}\define\|u_0\|_{H^m}+\|b_0\|_{H^{m+1}}+\|a_0\|_{H^m}\leq \varepsilon
\end{align*}
for sufficiently small $\varepsilon>0,$ and
 for any $t<T$,
\begin{align}\label{6.1}
	&	\|u(t)\|_{L^q}+	\|b(t)\|_{L^q}\leq C_0t^{\frac{1}{q}-\frac{1}{2}}\left(1+\gamma\right)\left\| (u_0,b_0,a_0)\right\|_{X^m(\mr^2)}.
\end{align}
Here the constant $C_0\geq C> 0$  is independent of $\gamma$ that will be specified later, and $C$ is the constant appearing on the right-hand side of \eqref{th1-1} in Theorem \ref{thm1}. 
By the continuity argument, it remains to verify 
\begin{align}\label{6.2}
	&	\|u(t)\|_{L^q}+	\|b(t)\|_{L^q}\leq \dfrac{C_0 }{2}t^{\frac{1}{q}-\frac{1}{2}}\left(1+\gamma\right)\left\| (u_0,b_0,a_0)\right\|_{X^m(\mr^2)}.
\end{align}

Taking the $L^q,~q\geq2$ norm of both sides of \eqref{u integral} yields	
\begin{align}\label{6.3}
	\|u(t)\|_{L^q}\leq \left\| \e^{t\Delta} u_0\right\|_{L^q}+\left\| \int_0^t \e^{(t-\tau)\Delta}\mathbb{P}\left(b\cdot\nabla b-u\cdot\nabla u\right)(\tau)\dd \tau \right\|_{L^q}.
\end{align}
 Utilizing Lemma \ref{heatkernel}, we have
\begin{align*}
	\left\| \e^{t\Delta} u_0\right\|_{L^q}&\leq  c_0 \left\| (u_0,b_0,a_0)\right\|_{X^m(\mr^2)}t^{\frac{1}{q}-\frac{1}{2}}.
\end{align*}

For the nonlinear term, we obtain
\begin{equation*}\begin{split}
	&\left\| \int_0^t \e^{(t-\tau)\Delta}\mathbb{P}\left(b\cdot\nabla b-u\cdot\nabla u\right)(\tau)\dd \tau \right\|_{L^q} \\\nonumber
	\leq& \int_0^\frac{t}{2} \left\|  \e^{(t-\tau)\Delta}\mathbb{P}\left(b\cdot\nabla b-u\cdot\nabla u\right)(\tau) \right\|_{L^q}\dd \tau +\int_\frac{t}{2}^t \left\|  \e^{(t-\tau)\Delta}\mathbb{P} \left(b\cdot\nabla b-u\cdot\nabla u\right)(\tau) \right\|_{L^q} \dd \tau \\
	\define&I_1+I_2.
\end{split}\end{equation*}
Denote $r$ as 
$
	\dfrac{1}{r}=\dfrac{1}{2}+\dfrac{1}{q},
$
then, we get by Lemma \ref{gag},
\begin{align*}
	\|u\|_{L^{2r}}\leq C\|u\|_{L^2}^\frac{1}{2}\|u\|_{L^q}^\frac{1}{2}.
\end{align*}
This, together with Lemma \ref{heatkernel}, \eqref{th1-1} and \eqref{6.1}  yields 
\begin{equation*}\begin{split}
	I_1
	&\leq C\int_0^\frac{t}{2}(t-\tau)^{-\frac{1}{2}-\frac{1}{r}+\frac{1}{q}}
	(\|u(\tau)\|_{L^{2r}}^2+	\|b(\tau)\|_{L^{2r}}^2)\dd \tau\\
	&\leq C\int_0^\frac{t}{2}(t-\tau)^{-1}\|(u,b)(\tau)\|_{L^2}\|(u,b)(\tau)\|_{L^q}\dd \tau\\
	&\leq Ct^{-1}\int_0^\frac{t}{2}\left(C_0\left(1+\gamma\right)\left\| (u_0,b_0,a_0)\right\|_{X^m(\mr^2)} \tau^{\frac{1}{q}-\frac{1}{2}}\right)\\
	&~~~~~~\left(C\left(1+\gamma^{\frac{m}{2m+2}}\right)\left\| (u_0,b_0,a_0)\right\|_{X^m(\mr^2)}\right)\dd\tau\\
	&\leq C\left(1+\gamma\right)\left(1+\gamma^{\frac{m}{2m+2} }\right)\left(C_0\left\| (u_0,b_0,a_0)\right\|_{X^m(\mr^2)}\right)^2 t^{\frac{1}{q}-\frac{1}{2}}.
\end{split}\end{equation*}  $C_0\geq C$  has been used in the last inequality, where $C$ is the constant appearing on the right-hand side of \eqref{th1-1} in Theorem \ref{thm1}.

As for $I_2$, using \eqref{6.1} and	Lemma \ref{heatkernel} again,  we get for $q>2,$
\begin{align*}
	I_2
	&\leq C\int_{\frac{t}{2}}^{t}(t-\tau)^{-\frac{1}{2}-\frac{1}{q}}\left(\|u(\tau)\|_{L^{q}}^2+\|b(\tau)\|_{L^{q}}^2\right)\dd\tau\\
	&\leq C\int_\frac{t}{2}^t (t-\tau)^{-\frac{1}{2}-\frac{1}{q}}\left(C_0\left(1+\gamma\right)\left\| (u_0,b_0,a_0)\right\|_{X^m(\mr^2)} \tau^{\frac{1}{q}-\frac{1}{2}}\right)^2\dd\tau\\
	&\leq C\left(1+\gamma\right)^2\left(C_0\left\| (u_0,b_0,a_0)\right\|_{X^m(\mr^2)} \right)^2 t^{\frac{2}{q}-1}\int_{\frac{t}{2}}^{t}(t-\tau)^{-\frac{1}{2}-\frac{1}{q}}\dd\tau\\
	&\leq C\left(1+\gamma\right)^2\left(C_0\left\| (u_0,b_0,a_0)\right\|_{X^m(\mr^2)} \right)^2 t^{\frac{1}{q}-\frac{1}{2}}.
\end{align*}
By inserting  these estimates into \eqref{6.3}, we  get
\begin{align}\nonumber
	&\|u(t)\|_{L^q}\leq c_0\left\| (u_0,b_0,a_0)\right\|_{X^m(\mr^2)} t^{\frac{1}{q}-\frac{1}{2}}\\\label{6.5}
	&~~~~~~~~~~~~~~~+C\left(1+\gamma\right)\left(1+\gamma^{\frac{m}{2m+2}}+\gamma\right)\left(C_0\left\| (u_0,b_0,a_0)\right\|_{X^m(\mr^2)} \right)^2 t^{\frac{1}{q}-\frac{1}{2}}.
\end{align}

Taking the $L^q$ norm of both sides of \eqref{b integral} yields
\begin{equation}\label{b}
	\begin{split}
&	\|b(t)\|_{L^q}\\ \leq& \left\|\Big(K_0+\frac{1}{2}K_1\Big)b_0\right\|_{L^q}+\gamma\left\|K_1a_0\right\|_{L^q}+\left\|\int_{0}^{t}K_1(t-\tau)\left(b\cdot\nabla u-u\cdot\nabla b\right)(\tau)\dd \tau\right\|_{L^q}\\
	\define &J_1+ J_2+ J_3.\end{split}
\end{equation}
Due to the different behavior of the operator $K_0$ and $K_1$ at different frequencies, we  split the estimate of $J_1$ into  two parts. Indeed, by using Lemma \ref{hausdorff-Young}, we have
\begin{align*}
	J_1&\leq \left\|\left(K_0+\frac{1}{2}K_1\right)b_0\right\|_{L^q(\mathbb{R}^2)}\\&\leq	\left\|\w{\left(K_0+\frac{1}{2}K_1\right)}(\xi)b_0\right\|_{L^{q'}(S_1)}+	\left\|\w{\left(K_0+\frac{1}{2}K_1\right)}(\xi)b_0\right\|_{L^{q'}(S_2)}\\
	&\stackrel{\mathrm{def}}{=}	J_{11}+	J_{12},
\end{align*}where $\frac{1}{q'}=1-\frac{1}{q}.$
Applying Lemma \ref{young+KDf} and \eqref{fren-1} gives to, for some $p$ satisfying $p>2, ~\frac{1}{p}+\frac{1}{q}=\frac{1}{2},$
\begin{align*}
J_{11}&\leq C\left\||\xi|^{-1}\w{\left(K_0+\frac{1}{2}K_1\right)}(\xi)\right\|_{L^p(S_1)}\|\Lambda b_0\|_{L^2(\mr^2)}\\
	&\leq C\lef |\xi|^{-1}\e^{-\frac{1}{8\gamma}t} \rig_{L^p(\{\xi;4\gamma |\xi|^2\geq \frac{3}{4}\})}\|b_0\|_{H^1(\mr^2)}\\
	&\leq C\e^{-\frac{1}{8\gamma}t}\left(\int_{\frac{\sqrt{3}}{4\sqrt{\gamma}}}^{+\infty}r^{-p+1}\dd r\right)^{\frac{1}{p}}\|b_0\|_{H^1(\mr^2)}\\
	&\leq C\e^{-\frac{1}{8\gamma}t}\left(\frac{1}{p-2}(\frac{\sqrt{3}}{4\sqrt{\gamma}})^{2-p}\right)^{\frac{1}{p}}\|b_0\|_{H^1(\mr^2)}\\
	&\leq C\gamma^{\frac{1}{2}-\frac{1}{p}}\e^{-\frac{1}{8\gamma}t}\|b_0\|_{H^1(\mr^2)}\\&\leq C\gamma^{\frac{1}{2}}t^{\frac{1}{q}-\frac{1}{2}}\|b_0\|_{H^{1+m}(\mr^2)},
\end{align*}
where in the last inequality we have used the  following fact:
\begin{align}\label{basic}
	\alpha \geq 0\Rightarrow t^\alpha\e^{-\frac{1}{8\gamma}t}=\left(8\gamma\right)^\alpha\left(\dfrac{t}{8\gamma}\right)^\alpha\e^{-\frac{1}{8\gamma}t}\leq C\gamma^\alpha.
\end{align}
For the $J_{12}$ part,  utilizing  Lemma \ref{young+KDf} and \eqref{fren-3} yields, for some $p$ satisfying $p>2, ~\frac{1}{p}+\frac{1}{q}=\frac{1}{2},$
\begin{equation*}\begin{split}
J_{12}&\leq C\left\|\w{\left(K_0+\frac{1}{2}K_1\right)}(\xi)\right\|_{L^p(S_2)}\|b_0\|_{L^2(\mr^2)}\\
	&\leq C\left\|\e^{-|\xi|^2t}\right\|_{L^p(S_2)}\|b_0\|_{L^2(S_2)}\leq Ct^{\frac{1}{q}-\frac{1}{2}}\|b_0\|_{L^2(\mr^2)}.
\end{split}\end{equation*}
It follows that by collecting the above estimates, 
\begin{equation}\label{b-Lq0}
	\begin{split}
	J_{1}
	&\leq C t^{\frac{1}{q}-\frac{1}{2}}\left(\gamma^{\frac{1}{2}}\|b_0\|_{H^{m+1}(\mr^2)}+\|b_0\|_{L^2(\mr^2)}\right)\\
	&\leq Ct^{\frac{1}{q}-\frac{1}{2}}\left(1+\gamma^\frac{1}{2}\right)\|b_0\|_{H^{m+1}(\mr^2)}\\
	&\leq c_1\left(1+\gamma\right)\left\| (u_0,b_0,a_0)\right\|_{X^m(\mr^2)} t^{\frac{1}{q}-\frac{1}{2}}.
\end{split}\end{equation}

As for $J_{2}$, 
\begin{align*}
	J_{2}=\gamma\left\|K_1a_0\right\|_{L^q(\mr^2)}&\leq\gamma\left\|\w{K_1a_0}\right\|_{L^{q'}(S_1)}+\gamma\left\|\w{K_1a_0}\right\|_{L^{q'}(S_2)} \\ & \define J_{21}+J_{22}.
\end{align*}
According to  Lemma \ref{young+KDf}, \eqref{fren-2} and \eqref{basic},  we obtain
\begin{align*}
	J_{21}&\leq C\gamma \lef \gamma^{-\frac{1}{2}}|\xi|^{-1}\e^{-\frac{1}{8\gamma}t} \rig_{L^p(\{\xi;4\gamma |\xi|^2\geq \frac{3}{4}\})}\|a_0\|_{L^2(\mr^2)}\\
	&\leq C\gamma^{\frac{1}{2}}\e^{-\frac{1}{8\gamma}t}\left(\int_{\frac{\sqrt{3}}{4\sqrt{\gamma}}}^{+\infty}r^{-p+1}\dd r\right)^{\frac{1}{p}}\|a_0\|_{L^2(\mr^2)}\\
	&\leq C\gamma^{\frac{1}{2}}\e^{-\frac{1}{8\gamma}t}\left(\frac{1}{p-2}(\frac{\sqrt{3}}{4\sqrt{\gamma}})^{2-p}\right)^{\frac{1}{p}}\|a_0\|_{L^2(\mr^2)}\\
	&\leq C\gamma^{1-\frac{1}{p}}\gamma^{-\frac{1}{q}+\frac{1}{2}}t^{\frac{1}{q}-\frac{1}{2}}\|a_0\|_{L^2(\mr^2)}\leq C\gamma t^{\frac{1}{q}-\frac{1}{2}}\|a_0\|_{L^2(\mr^2)}.
\end{align*}	
 $J_{22}$ can be estimated in a similar fashion as $J_{12}.$ Indeed, we get
\begin{align*}
	J_{22}&\leq C\gamma t^{\frac{1}{q}-\frac{1}{2}}\|a_0\|_{L^2(\mr^2)}.
\end{align*}
As a result, we have
\begin{align}\label{6.12}
	J_{2}\leq C\gamma t^{\frac{1}{q}-\frac{1}{2}}\|a_0\|_{L^2(\mr^2)}\leq c_2\gamma\left\| (u_0,b_0,a_0)\right\|_{X^m(\mr^2)} t^{\frac{1}{q}-\frac{1}{2}}.
\end{align}

Similarly, we  split the estimate of the nonlinear term $J_{3}$ into two parts
\begin{equation}\label{b-Lq-1}
	\begin{split}
	J_{3}
	&\leq \left\|\int_{0}^{t}\w{K_1(t-\tau)\nabla\cdot\left(u\otimes b\right)}\dd \tau\right\|_{L^{q'}(S_1)}+\left\|\int_{0}^{t}\w{K_1(t-\tau)\nabla\cdot\left(u\otimes b\right)}\dd \tau\right\|_{L^{q'}(S_2)}\\
	&\stackrel{\mathrm{def}}{=}J_{31}+J_{32}.
\end{split}
\end{equation}
Applying Lemma \ref{young+KDf} and \eqref{fren-2} yields that
\begin{equation*}\begin{split}
	J_{31}&=\left\|\int_{0}^{t}\w{K_1(t-\tau)\nabla\cdot\left(u\otimes b\right)}\dd \tau\right\|_{L^{q'}(S_1)}\\
	&\leq C\int_{0}^{t} \lef |\xi|^{-m} \w{K_1}(\xi,t-\tau)\rig_{L^{2}(S_1)}\lef \Lambda^{m}\nabla\cdot\left(u(\tau)\otimes b(\tau)\right)\rig_{L^{\frac{2q}{2+q}}(\mr^2)}\dd \tau\\
	&\leq C \int_{0}^t \lef \gamma^{-\frac{1}{2}}|\xi|^{-1-m}\e^{-\frac{1}{8\gamma}(t-\tau)} \rig_{L^{2}(\{\xi;4\gamma |\xi|^2\geq \frac{3}{4}\})} \left\|\Lambda^{m}\nabla\cdot\left(u(\tau)\otimes b(\tau)\right)\right\|_{L^{\frac{2q}{2+q}}(\mr^2)} \dd\tau.
\end{split}\end{equation*}
Combining the above estimate with the following two basic calculations
\begin{align*}
	\left\||\xi|^{-1-m}\right\|_{L^{2}(\{\xi;4\gamma |\xi|^2\geq \frac{3}{4}\})}=\left(\int_{\frac{\sqrt{3}}{4\sqrt{\gamma}}}^{+\infty}\left(r^{-2m-1}\right)\dd r \right)^{\frac{1}{2}}=(\frac{1}{2m})^\frac12\left(\frac{\sqrt{3}}{4\sqrt{\gamma}}\right)^{-m},
\end{align*}
and
\begin{align*}
	&\left\|\Lambda^{m}\nabla\cdot\left(u(\tau)\otimes b(\tau)\right)\right\|_{L^{\frac{2q}{2+q}}(\mr^2)}\\ \leq &C \left(\|\nabla\Lambda^{m}u(\tau)\|_{L^{2}}\|b(\tau)\|_{L^{q}}+\|\nabla\Lambda^{m}b(\tau)\|_{L^{2}}\|u(\tau)\|_{L^q}\right),
\end{align*}
further implies that, after exploiting \eqref{p-3} in Lemma \ref{expintegral}
\begin{equation}\label{b-Lq2}
	\begin{split}
	J_{31}&\leq C\gamma^{\frac{m-1}{2}}\int_{0}^{t}\e^{-\frac{1}{8\gamma}(t-\tau)}\left(\|\nabla\Lambda^{m}u(\tau)\|_{L^{2}}\|b(\tau)\|_{L^{q}}+\|\nabla\Lambda^{m}b(\tau)\|_{L^{2}}\|u(\tau)\|_{L^q}\right)\dd\tau\\
	&\leq C\gamma^{\frac{m-1}{2}}\int_{0}^{t}\e^{-\frac{1}{8\gamma}(t-\tau)}\left(C_0\left(1+\gamma\right)\left\| (u_0,b_0,a_0)\right\|_{X^m(\mr^2)} \tau^{\frac{1}{q}-\frac{1}{2}}\right)\lef \nabla(u,b)(\tau)\rig_{H^m}\dd\tau\\
	&\leq C\gamma^{\frac{m-1}{2}}C_0\left(1+\gamma\right)\left\| (u_0,b_0,a_0)\right\|_{X^m(\mr^2)}\\
	&~~~~~~~~~~~~~~~\left(\int_{0}^t\e^{-\frac{1}{4\gamma}(t-\tau)}\tau^{\frac{2}{q}-1}\dd\tau\right)^{\frac{1}{2}}\left(\int_{0}^t\lef \nabla(u,b)(\tau)\rig_{H^m}^2\dd\tau\right)^{\frac{1}{2}}\\
\\	&\leq C\left(1+\gamma\right)\left(\gamma^{\frac{m}{2}}+\gamma^{\frac{m}{2}+\frac{m}{2m+2}}\right)\left(C_0\left\| (u_0,b_0,a_0)\right\|_{X^m(\mr^2)} \right)^2t^{\frac{1}{q}-\frac{1}{2}}.
\end{split}\end{equation}
To differentiate the singular behavior of $\tau$ near $0$ and $t$, $J_{32}$
is divided  into two parts,
\begin{equation*}
	\begin{split}
		J_{32}&=		\left\|\int_{0}^{t}\w{K_1(t-\tau)\nabla\cdot\left(u\otimes b\right)}\dd \tau\right\|_{L^{q'}(S_2)}\\
	&\leq \int_{0}^\frac{t}{2}\left\|\w{K_1(t-\tau)\nabla\cdot\left(u\otimes b\right)}\right\|_{L^{q'}(S_2)}\dd \tau \\
	&~~~+\int_{\frac{t}{2}}^{t}\left\|\w{K_1(t-\tau)\nabla\cdot\left(u\otimes b\right)}\right\|_{L^{q'}(S_2)}\dd \tau \\
	&\define 	\overline{J_{32}}+	\widetilde{J_{32}}.
\end{split}
\end{equation*}
Thanks to Lemma \ref{young+KDf}, $\overline{J_{32}}$ is bounded by
\begin{equation}\label{b-Lq3}
	\begin{split}
		\overline{J_{32}}&\leq C\int_{0}^\frac{t}{2} \||\xi|\e^{-|\xi|^2(t-\tau)}\|_{L^2(S_2)} \left\| u(\tau)\otimes b(\tau)\right\|_{L^{\frac{2q}{2+q}}} \dd\tau\\
	&\leq C\int_{0}^\frac{t}{2} (t-\tau)^{-1} \|u(\tau)\|_{L^2}\|b(\tau)\|_{L^q}\dd\tau\\
	&\leq Ct^{-1}\left(1+\gamma\right)\left(1+\gamma^\frac{m}{2m+2}\right)\int_{0}^\frac{t}{2} \left(C_0\left\| (u_0,b_0,a_0)\right\|_{X^m(\mr^2)}\right)^2 \tau^{\frac{1}{q}-\frac{1}{2}}\dd\tau\\
	&\leq C\left(1+\gamma\right)\left(1+\gamma^\frac{m}{2m+2}\right)\left(C_0\left\| (u_0,b_0,a_0)\right\|_{X^m(\mr^2)}\right)^2 t^{\frac{1}{q}-\frac{1}{2}}.
\end{split}
\end{equation}
As for $\widetilde{J_{32}},$ by using Lemma \ref{young+KDf}, we first get
\begin{equation*}\begin{split}
		\widetilde{J_{32}}
		&\leq  C \int_{\frac{t}{2}}^{t}\lef|\xi|\e^{-|\xi|^2(t-\tau)}\rig_{L^p(S_2)}\lef u(\tau)\otimes b(\tau)\right\|_{L^k(\mr^2)}\dd \tau,
	\end{split}
	\end{equation*}
where $\dfrac{1}{q}+\dfrac{1}{p}=\dfrac{1}{\kappa}.$
By choosing $p=\frac{q}{2}+1$,  using H\"{o}lder's inequality and
\eqref{6.1},  we have
\begin{equation}\label{b-Lq4}
	\begin{split}
	\widetilde{J_{32}}
&\leq  C\int_{\frac{t}{2}}^{t}(t-\tau)^{-\frac{1}{2}-\frac{2}{q+2}}\|u(\tau)\|_{L^q(\mr^2)}\|b(\tau)\|_{L^{\frac{q}{2}+1}(\mr^2)}\dd\tau\\&\leq C\left(1+\gamma\right)\left(1+\gamma^{\frac{1}{2}+\frac{m}{4m+4}}\right)\left(C_0\left\| (u_0,b_0,a_0)\right\|_{X^m(\mr^2)}\right)^2\\&~~~~~~~~~~~~~~~\times \int_{\frac{t}{2}}^{t}(t-\tau)^{-\frac{1}{2}-\frac{2}{q+2}}\tau^{\frac{1}{q}-\frac{1}{2}}\tau^{\frac{2}{q+2}-\frac{1}{2}}\dd\tau\\&\leq C\left(1+\gamma\right)\left(1+\gamma^{\frac{1}{2}+\frac{m}{4m+4}}\right)\left(C_0\left\| (u_0,b_0,a_0)\right\|_{X^m(\mr^2)}\right)^2 t^{\frac{1}{q}-\frac{1}{2}}.
\end{split}
\end{equation}
Inserting \eqref{b-Lq2},  \eqref{b-Lq3} and  \eqref{b-Lq4}  into  \eqref{b-Lq-1} gives to
\begin{equation}\label{b-Lq}
	\begin{split}
		J_{3}
		&\leq C\left(1+\gamma\right)\left(1+\gamma^{\frac{1}{2}+\frac{m}{4m+4}}+\gamma^\frac{m}{2m+2}\right)\left(C_0\left\| (u_0,b_0,a_0)\right\|_{X^m(\mr^2)}\right)^2t^{\frac{1}{q}-\frac{1}{2}}.
	\end{split}
\end{equation}
Combining \eqref{b-Lq0}, \eqref{6.12}, \eqref{b-Lq} and \eqref{b}, and applying Young's inequality to the polynomial of $\gamma$ yields
\begin{equation}\label{b-Lq-f}
	\begin{split}
	\|b(t)\|_{L^q}&\leq \left(c_1+c_2\right)\left(1+\gamma\right)\left\| (u_0,b_0,a_0)\right\|_{X^m(\mr^2)} t^{\frac{1}{q}-\frac{1}{2}}\\
	&~~~~+C\left(1+\gamma\right)\left(1+\gamma^{\frac{1}{2}+\frac{m}{4m+4}}+\gamma^{\frac{m}{2}+\frac{m}{2m+2}}\right)\left(C_0\left\| (u_0,b_0,a_0)\right\|_{X^m(\mr^2)}\right)^2t^{\frac{1}{q}-\frac{1}{2}}.
	\end{split}
\end{equation}

It follows that by adding \eqref{6.5} and \eqref{b-Lq-f} together
\begin{equation*}\begin{split}
	&\|u(t)\|_{L^q}+	\|b(t)\|_{L^q}\\ \leq& \left(c_0+c_1+c_2\right)\left\| (u_0,b_0,a_0)\right\|_{X^m(\mr^2)}\left(1+\gamma\right) t^{\frac{1}{q}-\frac{1}{2}}\\
	&~~~+C\left(1+\gamma\right)\left(1+\gamma+\gamma^{\frac{1}{2}+\frac{m}{4m+4}}+\gamma^{\frac{m}{2}+\frac{m}{2m+2}}\right)\left(C_0\left\| (u_0,b_0,a_0)\right\|_{X^m(\mr^2)}\right)^2t^{\frac{1}{q}-\frac{1}{2}}.
\end{split}\end{equation*}
which implies \eqref{6.2}, by selecting $C_0\geq 4\left(c_0+c_1+c_2\right)$ and a sufficiently small initial value  such that \begin{align*}
	CC_0\left\| (u_0,b_0,a_0)\right\|_{X^m(\mr^2)}\left(1+\gamma+\gamma^{\frac{1}{2}+\frac{m}{4m+4}}+\gamma^{\frac{m}{2}+\frac{m}{2m+2}}\right)\leq \frac{1}{4}.
\end{align*}
Here, we assume that $\left\|(u_0,b_0,a_0)\right\|_{X^m(\mr^2)}\leq \varepsilon$, and \begin{align*}
	\varepsilon \leq  \dfrac{1}{4CC_0\left(1+\gamma+\gamma^{\frac{1}{2}+\frac{m}{4m+4}}+\gamma^{\frac{m}{2}+\frac{m}{2m+2}}\right)}.
\end{align*}
At this point, we have completed the entire proof of Theorem \ref{thm1}.

\end{proof}

\section{The Decay of $(u, b)$ in $\dot{H}^\beta\times \dot{H}^\varrho$, for $0\leq \beta\leq m, 0\leq\varrho<m+1$ }

\begin{proof}[Proof of the Theorem \ref{thm2}]
	Since global existence and uniqueness have already been proved in Theorem \ref{thm1}, we first assume that $t < 1$, and then we get
	\begin{align*}
		\|\Lambda^\beta u(t)\|_{L^2}&\leq C\left(1+\gamma^{\frac{m}{2m+2}}\right)\left\| (u_0,b_0,a_0)\right\|_{X^{m,c}(\mr^2)} \\
		&\leq C\left(1+\gamma^{1+\frac{1}{c}-\frac{1-\beta}{2}}\right)\left\| (u_0,b_0,a_0)\right\|_{X^{m,c}(\mr^2)}  (1+t)^{\frac{1-\beta}{2}-\frac{1}{c}},\\
		\|\Lambda^\varrho b(t)\|_{L^2}&\leq C\left(1+\gamma^{\frac{m}{2m+2}}\right)\left\| (u_0,b_0,a_0)\right\|_{X^m(\mr^2)}\\
		&\leq C\left(1+\gamma^{1+\frac{1}{c}-\frac{1-\beta}{2}}\right)\left\| (u_0,b_0,a_0)\right\|_{X^{m,c}(\mr^2)} (1+t)^{\frac{1-\varrho}{2}-\frac{1}{c}},
	\end{align*}
	where $0\leq \beta \leq m$ and $0 \leq \varrho < m + 1$.
	
	Based on the above statements, we only need to prove the case when $t \geq  1$ in the following.
	For the sake of clarity, we divide the estimates into two decay levels.
 In the first part, we will prove \eqref{th2-1}. In the second part, we will prove \eqref{th2-2}.
	
	{\bf Step 1.} 
Assume that 
\begin{align*}
	\|(u_0,b_0,a_0)\|_{X^{m,c}(\mr^2)}\define \|u_0\|_{H^m}+\|b_0\|_{H^{m+1}}+\|a_0\|_{H^m}+\|u_0\|_{L^c}+\|b_0\|_{L^c}+\|a_0\|_{L^c}\leq \varepsilon
\end{align*}
for suffficiently small $\varepsilon>0$, and
for any $t<T$, $0\leq \beta\leq m,$
	\begin{align}\label{high-1}
	\|\Lambda^\beta u(t)\|_{L^2}+	\|\Lambda^\beta b(t)\|_{L^2}\leq C_0\left(1+\gamma^{1+\frac{1}{c}-\frac{1-\beta}{2}}\right)	\|(u_0,b_0,a_0)\|_{X^{m,c}(\mr^2)} (1+t)^{\frac{1-\beta}{2}-\frac{1}{c}}.
\end{align}
Here $C_0\geq C> 0$  is a constant that will be specified later, where $C$ is the constant appearing on the right-hand side of \eqref{th1-1} in Theorem \ref{thm1}, independent of $\gamma$.
 By the continuity argument (\cite{tao}), it remains to verify 
	\begin{equation}\label{high-2}
		\begin{split}
		&	\|\Lambda^\beta u(t)\|_{L^2}+	\|\Lambda^\beta b(t)\|_{L^2}\leq \frac{C_0}{2} \left(1+\gamma^{1+\frac{1}{c}-\frac{1-\beta}{2}}\right)	\|(u_0,b_0,a_0)\|_{X^{m,c}(\mr^2)}(1+t)^{\frac{1-\beta}{2}-\frac{1}{c}}.\end{split}
	\end{equation}

	
First, it follows directly from  \eqref{high-1} that
	\begin{align}\label{high-0}
		\|\Lambda^\beta u(t)\|_{L^2}+	\|\Lambda^\beta b(t)\|_{L^2}\leq C_0\left(1+\gamma^{1+\frac{1}{c}-\frac{1-\beta}{2}}\right)	\|(u_0,b_0,a_0)\|_{X^{m,c}(\mr^2)} t^{\frac{1-\beta}{2}-\frac{1}{c}}.	\end{align}	
	
	Taking the $\dot{H}^{\beta}$ norm of the integral form of the solution $u(x,t)$ as shown in \eqref{u integral} of Proposition \ref{integral}, we can deduce that
	\begin{align*}
		\|\Lambda^\beta u\|_{L^2}\leq \left\|\Lambda^\beta \e^{t\Delta} u_0\right\|_{L^2}+\left\| \int_0^t \e^{(t-\tau)\Delta}\Lambda^\beta\mathbb{P}\left(b\cdot\nabla b-u\cdot\nabla u\right)\dd \tau \right\|_{L^2},
	\end{align*}
	where $0\leq \beta\leq m$. 
	 Applying Lemma \ref{heatkernel} yields
	\begin{align}\label{h-0}
		\left\| \Lambda^\beta \e^{t\Delta} u_0\right\|_{L^2}&\leq Ct^{\frac{1-\beta}{2}-\frac{1}{c}}\|u_0\|_{L^c}\leq c_0	\|(u_0,b_0,a_0)\|_{X^{m,c}(\mr^2)}(1+t)^{\frac{1-\beta}{2}-\frac{1}{c}}.
	\end{align}

	To distinguish the effects caused by different singularities, we divide the estimates of the nonlinear term into the following two parts:
	\begin{align*}
		&\left\| \int_0^t \e^{(t-\tau)\Delta}\Lambda^\beta\mathbb{P}\left(b\cdot\nabla b-u\cdot\nabla u\right)\dd \tau \right\|_{L^2} \\
		\leq& \int_0^\frac{t}{2} \left\|  \e^{(t-\tau)\Delta}\Lambda^\beta\mathbb{P}\left(b\cdot\nabla b-u\cdot\nabla u\right) \right\|_{L^2}\dd \tau +\int_\frac{t}{2}^t \left\|  \e^{(t-\tau)\Delta}\Lambda^\beta\mathbb{P} \left(b\cdot\nabla b-u\cdot\nabla u\right) \right\|_{L^2} \dd \tau \\
		=&H_1+H_2.
	\end{align*}

	For $H_1$, by Lemma \ref{heatkernel}, \eqref{th1-1} and \eqref{high-1}, we obtain
	\begin{equation}\label{h-1}
		\begin{split}
	H_1&=\int_0^\frac{t}{2} \left\|  \e^{(t-\tau)\Delta}\Lambda^\beta\mathbb{P}\left(b\cdot\nabla b-u\cdot\nabla u\right)(\tau) \right\|_{L^2}\dd \tau \\
		&\leq C\int_0^\frac{t}{2}(t-\tau)^{-1-\frac{\beta}{2}}\|u(\tau),b(\tau)\|_{L^2}^2\dd \tau\\ &\leq C\left(1+\gamma^{1+\frac{1}{c}-\frac{1}{2}}\right)\left(1+\gamma^{\frac{m}{2m+2}}\right)\left(C_0\|(u_0,b_0,a_0)\|_{X^{m,c}(\mr^2)}\right)^2t^{-1-\frac{\beta}{2}}\int_0^\frac{t}{2} \tau^{\frac{1}{2}-\frac{1}{c}}\dd\tau\\
		&\leq C\left(1+\gamma^{1+\frac{1}{c}-\frac{1}{2}}\right)\left(1+\gamma^{\frac{m}{2m+2}}\right)\left(C_0\|(u_0,b_0,a_0)\|_{X^{m,c}(\mr^2)}\right)^2 t^{\frac{1-\beta}{2}-\frac{1}{c}}\\
		&\leq C\left(1+\gamma^{1+\frac{1}{c}-\frac{1-\beta}{2}}\right)\left(1+\gamma^{\frac{m}{2m+2}}\right)\left(C_0\|(u_0,b_0,a_0)\|_{X^{m,c}(\mr^2)}\right)^2 (1+t)^{\frac{1-\beta}{2}-\frac{1}{c}}.
	\end{split}
\end{equation}

	When $0\leq\beta<1$ (either $m < 1$ and $0\leq  \beta \leq m$, or $m \geq  1$ and $0\leq\beta<1$), we invoke  Lemma \ref{heatkernel} and Sobolev embedding inequality to deduce
\begin{equation}\label{high-}
	\begin{split}
		H_2
		&\leq C\int_\frac{t}{2}^t (t-\tau)^{-1+\frac{\beta}{2}}\left\|\Lambda^\beta \left(b\otimes b+u\otimes u\right)\right\|_{L^\frac{2}{2-\beta}}\dd\tau\\
		&\leq C\int_{\frac{t}{2}}^t(t-\tau)^{-1+\frac{\beta}{2}} \|u(\tau),b(\tau)\|_{L^\frac{2}{1-\beta}}\left\|\Lambda^\beta\left(u(\tau),b(\tau)\right)\right\|_{L^{2}}\dd\tau\\
		&\leq C\int_{\frac{t}{2}}^t(t-\tau)^{-1+\frac{\beta}{2}} \left\|\Lambda^\beta\left(u(\tau),b(\tau)\right)\right\|_{L^{2}}^2\dd\tau\\
		&\leq C\int_{\frac{t}{2}}^t(t-\tau)^{-1+\frac{\beta}{2}} \left\|\Lambda^\beta\left(u(\tau),b(\tau)\right)\right\|_{L^{2}}^{1+\theta} \left\|\left(u(\tau),b(\tau)\right)\right\|_{H^m}^{1-\theta}\dd\tau,
		\end{split}
\end{equation} where $\theta=\dfrac{\frac{\beta}{2}}{\frac{\beta}{2}+\frac{1}{c}-\frac{1}{2}}.$
It is noted that
	\begin{equation*}\begin{split}
		&\left\|\Lambda^\beta\left(u(\tau),b(\tau)\right)\right\|_{L^{2}}^{1+\theta}\\
		\leq&\left(C_0\|(u_0,b_0,a_0)\|_{X^{m,c}(\mr^2)} \right)^{1+\theta}\left(1+\gamma^{1+\frac{1}{c}-\frac{1-\beta}{2}}\right)^{1+\theta}\tau^{\frac{1-\beta}{2}-\frac{1}{c}-\frac{\beta}{2}}.
\end{split}	\end{equation*}
	Substituting it back into \eqref{high-} and applying  Young's inequality to $\left(1+\gamma^{\frac{m}{2m+2}}\right)^{1-\theta}$, it holds that
	\begin{equation}\label{h-2}
			\begin{split}
		H_2&\leq C\left(C_0\|(u_0,b_0,a_0)\|_{X^{m,c}(\mr^2)}\right)^{2}\\
		&~~~~~~\left(1+\gamma^{\frac{m}{2m+2}}\right)^{1-\theta}\left(1+\gamma^{1+\frac{1}{c}-\frac{1-\beta}{2}}\right)^{1+\theta}\int_{\frac{t}{2}}^t (t-\tau)^{-1+\frac{\beta}{2}} \tau^{\frac{1-\beta}{2}-\frac{1}{c}-\frac{\beta}{2}}\dd\tau\\
		&\leq C\left(C_0\|(u_0,b_0,a_0)\|_{X^{m,c}(\mr^2)}\right)^2\left(1+\gamma^{1+\frac{1}{c}-\frac{1-\beta}{2}}\right)^2 t^{\frac{1-\beta}{2}-\frac{1}{c}-\frac{\beta}{2}} \int_{\frac{t}{2}}^t (t-\tau)^{-1+\frac{\beta}{2}}\dd\tau\\
		&\leq C\left(1+\gamma^{1+\frac{1}{c}-\frac{1-\beta}{2}}\right)^2\left(C_0\|(u_0,b_0,a_0)\|_{X^{m,c}(\mr^2)}\right)^2 t^{\frac{1-\beta}{2}-\frac{1}{c}}\\
		&\leq C\left(1+\gamma^{1+\frac{1}{c}-\frac{1-\beta}{2}}\right)^2\left(C_0\|(u_0,b_0,a_0)\|_{X^{m,c}(\mr^2)}\right)^2 (1+t)^{\frac{1-\beta}{2}-\frac{1}{c}}.
	\end{split}
	\end{equation}

	On the other hand, when $\beta\geq 1$, taking $1<\kappa<2$,
	\begin{equation}\label{high-3}
		\begin{split}
		H_2
		&\leq C\int_\frac{t}{2}^t (t-\tau)^{-\frac{1}{\kappa}}\left\|\Lambda^\beta \left(b\otimes b+u\otimes u\right)\right\|_{L^\kappa}\dd\tau\\
		&\leq C\int_\frac{t}{2}^t (t-\tau)^{-\frac{1}{\kappa}}\|u(\tau),b(\tau)\|_{L^\frac{2\kappa}{2-\kappa}}\left\|\Lambda^\beta\left(u(\tau),b(\tau)\right)\right\|_{L^{2}}\dd\tau.
	\end{split}\end{equation}
	Since $1\leq \beta \leq m$ and $0<2-\frac{2}{\kappa}<1\leq m$, we derive 
	\begin{align*}
		&\|u(\tau),b(\tau)\|_{L^\frac{2\kappa}{2-\kappa}}\left\|\Lambda^\beta\left(u(\tau),b(\tau)\right)\right\|_{L^{2}}\\
		&\leq C \|\Lambda^{\frac{2\kappa-2}{\kappa}}\left(u(\tau),b(\tau)\right)\|_{L^2}\left\|\Lambda^\beta\left(u(\tau),b(\tau)\right)\right\|_{L^{2}}\\
		&\leq C\|\Lambda^{\frac{2\kappa-2}{\kappa}}\left(u(\tau),b(\tau)\right)\|_{L^2}^{\theta_1}\left\| u(\tau),b(\tau)\right\|_{H^m}^{1-\theta_1}\left\|\Lambda^\beta\left(u(\tau),b(\tau)\right)\right\|_{L^{2}},
	\end{align*}
	with $\theta_1=\dfrac{1-\frac{1}{\kappa}}{\frac{1}{c}+\frac{1}{2}-\frac{1}{\kappa}}.$ By choosing $\beta=\frac{2k-2}{k}$ in \eqref{high-0}, we have 
	\begin{equation*}\begin{split}
	\|\Lambda^{\frac{2k-2}{k}}\left(u(\tau),b(\tau)\right)\|_{L^2}\leq C_0\left(1+\gamma^{\frac{3}{2}+\frac{1}{c}-\frac{1}{\kappa}}\right)\|(u_0,b_0,a_0)\|_{X^{m,c}(\mr^2)} \tau^{\frac{2-\kappa}{2\kappa}-\frac{1}{c}}.
\end{split}	
\end{equation*}
This, together with \eqref{th1-1} and \eqref{high-0} yields
	\begin{align*}
		&\|u(\tau),b(\tau)\|_{L^\frac{2\kappa}{2-\kappa}}\left\|\Lambda^\beta\left(u(\tau),b(\tau)\right)\right\|_{L^{2}}\\
		\leq& C\left(1+\gamma^{1+\frac{1}{c}-\frac{1-\beta}{2}}\right)\left(1+\gamma^{\frac{3}{2}+\frac{1}{c}-\frac{1}{\kappa}}\right)^{\theta_1}\left(1+\gamma^{\frac{m}{2m+2}}\right)^{1-\theta_1}\\
		&~~~~~~ \left(C_0\|(u_0,b_0,a_0)\|_{X^{m,c}(\mr^2)}\right)^2\tau^{\frac{1-\beta}{2}-\frac{1}{c}}\tau^{\left(\frac{2-\kappa}{2\kappa}-\frac{1}{c}\right)\theta_1}\\
		\leq& C\left(1+\gamma^{1+\frac{1}{c}-\frac{1-\beta}{2}}\right)\left(1+\gamma^{\frac{3}{2}+\frac{1}{c}-\frac{1}{\kappa}}\right) \left(C_0\|(u_0,b_0,a_0)\|_{X^{m,c}(\mr^2)}\right)^2\tau^{\frac{1}{\kappa}-1+\frac{1-\beta}{2}-\frac{1}{c}},
	\end{align*}where in the last inequality   Young's inequality to $\left(1+\gamma^{\frac{m}{2m+2}}\right)^{1-\theta_1}$ has been applied.
	Inserting the above estimate into \eqref{high-3} shows that
	\begin{equation}\label{h-3}
		\begin{split}
		H_2&\leq C\left(1+\gamma^{1+\frac{1}{c}-\frac{1-\beta}{2}}\right)\left(1+\gamma^{\frac{3}{2}+\frac{1}{c}-\frac{1}{\kappa}}\right)\\
		&~~~~~~ \left(C_0\|(u_0,b_0,a_0)\|_{X^{m,c}(\mr^2)}\right)^2 \int_{\frac{t}{2}}^t(t-\tau)^{-\frac{1}{\kappa}}\tau^{\frac{1}{\kappa}-1+\frac{1-\beta}{2}-\frac{1}{c}}\dd\tau\\
		&\leq C\left(1+\gamma^{1+\frac{1}{c}-\frac{1-\beta}{2}}\right)\left(1+\gamma^{\frac{3}{2}+\frac{1}{c}-\frac{1}{\kappa}}\right) \left(C_0\|(u_0,b_0,a_0)\|_{X^{m,c}(\mr^2)}\right)^2 t^{\frac{1-\beta}{2}-\frac{1}{c}}\\
		&\leq C\left(1+\gamma^{1+\frac{1}{c}-\frac{1-\beta}{2}}\right)\left(1+\gamma^{\frac{3}{2}+\frac{1}{c}-\frac{1}{\kappa}}\right) \left(C_0\|(u_0,b_0,a_0)\|_{X^{m,c}(\mr^2)}\right)^2 (1+t)^{\frac{1-\beta}{2}-\frac{1}{c}}.
	\end{split}
	\end{equation}
	
	Collecting the estimates of \eqref{h-0}, \eqref{h-1}, \eqref{h-2}, \eqref{h-3} and using Young's inequality lead to
	\begin{equation}\label{u-f}
		\begin{split}
		&\|\Lambda^\beta u\|_{L^2}
		\\ \leq& c_0\|(u_0,b_0,a_0)\|_{X^{m,c}(\mr^2)} (1+t)^{\frac{1-\beta}{2}-\frac{1}{c}}+C\left(1+\gamma^{1+\frac{1}{c}-\frac{1-\beta}{2}}\right)\\
		&~~~~~~~~~~~~\left(1+\gamma^{\frac{3}{2}+\frac{1}{c}}+\gamma^{\frac{1}{2}+\frac{1}{c}+\frac{\beta}{2}}\right)  \left(C_0\|(u_0,b_0,a_0)\|_{X^{m,c}(\mr^2)}\right)^2 (1+t)^{\frac{1-\beta}{2}-\frac{1}{c}}.
	\end{split}
	\end{equation}
	
The estimates for the $b$ equation are more delicate.	First, we act on the integral representation of $b$ as given in \eqref{b integral} of Proposition \ref{integral}, and   infer that
	\begin{equation}\label{h-b-1}
		\begin{split}
		\|\Lambda^\beta b\|_{L^2}\leq& \left\|\left(K_0+\frac{1}{2}K_1\right)\Lambda^\beta b_0\right\|_{L^2}+\gamma\left\|K_1\Lambda^\beta a_0\right\|_{L^2}\\
		&~~~+\left\|\int_{0}^{t}K_1(t-\tau)\Lambda^\beta \left(b\cdot\nabla u-u\cdot\nabla b\right)\dd \tau\right\|_{L^2}\\
		\stackrel{\mathrm{def}}{=}&L+M+N
		\end{split}
\end{equation}

	According to Lemma $\ref{kernel prop}$, we know that the decay caused by $K_1$ and $K_2$ varies at different frequencies. Therefore, we  divide $L$ into two parts:
	\begin{equation*}
		\begin{split}
	L&\leq	\left\|\w{\left(K_0+\frac{1}{2}K_1\right)}\Lambda^\beta b_0\right\|_{L^2(S_1)}+	\left\|\w{\left(K_0+\frac{1}{2}K_1\right)}\Lambda^\beta b_0\right\|_{L^2(S_2)}\\
		&\stackrel{\mathrm{def}}{=}	L_1+L_2.
	\end{split}
\end{equation*}
	By virtue of Lemma \ref{young+KDf}, \eqref{fren-1} and \eqref{basic},
	it follows that
\begin{equation*}
	\begin{split}
		L_1
		&\leq  C\e^{-\frac{1}{8\gamma}t}\|b_0\|_{H^\beta(S_1)}\\\label{4.7}
		&\leq C\gamma^{-\frac{1-\beta}{2}+\frac{1}{c}}t^{\frac{1-\beta}{2}-\frac{1}{c}}\|b_0\|_{H^m(\mathbb{R}^2)}\\&\leq C\gamma^{-\frac{1-\beta}{2}+\frac{1}{c}}(1+t)^{\frac{1-\beta}{2}-\frac{1}{c}}\|b_0\|_{H^m(\mathbb{R}^2)}.
		\end{split}
\end{equation*}
	Employing  Lemma \ref{young+KDf} and \eqref{fren-3} yields
	\begin{equation*}
		\begin{split}
		L_2
		&\leq C\left\|\e^{-|\xi|^2t}\w{\Lambda^\beta b_0}(\xi)\right\|_{L^2(S_2)}\\
		&\leq C\||\xi|^{\beta} \e^{-|\xi|^2t}\|_{L^{\frac{2c}{2-c}}(\mr^2)}\|\w{b_0}(\xi)\|_{L^{\frac{c}{c-1}}(\mr^2)} \\
		&\leq Ct^{-\frac{\beta}{2}+\frac{c-2}{2c}} \|b_0\|_{L^c(\mr^2)} \\&\leq C(1+t)^{\frac{1-\beta}{2}-\frac{1}{c}} \|b_0\|_{L^c(\mr^2)}.
	\end{split}
\end{equation*}
	Combining the above estimates, we obtain
	\begin{align}\label{h-b-2}	L	&\leq c_1 \left(1+\gamma^{-\frac{1-\beta}{2}+\frac{1}{c}}\right)\|(u_0,b_0,a_0)\|_{X^{m,c}(\mr^2)}(1+t)^{\frac{1-\beta}{2}-\frac{1}{c}} .
	\end{align}

	Similarly, one can estimate the term involving $M$ as
	\begin{equation}\label{h-b-3}
		\begin{split}
		M=\gamma\left\|K_1\Lambda^\beta a_0\right\|_{L^2}&\leq C\gamma(1+\gamma^{-\frac{1-\beta}{2}+\frac{1}{c}})(1+t)^{\frac{1-\beta}{2}-\frac{1}{c}}\|a_0\|_{H^m \cap L^c}\\
		&\leq c_2\left(1+\gamma^{1-\frac{1-\beta}{2}+\frac{1}{c}}\right)\|(u_0,b_0,a_0)\|_{X^{m,c}(\mr^2)}(1+t)^{\frac{1-\beta}{2}-\frac{1}{c}}.
	\end{split}
\end{equation}

	Now,  we do estimate for $N.$
	\begin{align*}
		N&\leq\left\|\int_{0}^{t}K_1(t-\tau)\Lambda^\beta\nabla\cdot\left(u\otimes b\right)\dd \tau\right\|_{L^2}\\
		&\leq \int_{0}^{t} \left\|\w{K_1(t-\tau)\Lambda^\beta\nabla\cdot\left(u\otimes b\right)}\right\|_{L^2(S_1)}\dd \tau+ \int_{0}^{t} \left\|\w{K_1(t-\tau)\Lambda^\beta\nabla\cdot\left(u\otimes b\right)}\right\|_{L^2(S_2)}\dd \tau\\
		&\stackrel{\mathrm{def}}{=}N_1+N_2.
	\end{align*}
For the first part, 	after using  \eqref{fren-2}, it follows that
	\begin{equation*}
		\begin{split}
		N_1&\leq C \int_0^t \left\|\gamma^{-\frac{1}{2}}|\xi|^{-1}\e^{-\frac{1}{8\gamma}(t-\tau)}\w{\Lambda^\beta\nabla\cdot\left(u\otimes b\right)}(\xi)\right\|_{L^2(S_1)}\dd \tau\\
		&\leq C\gamma^{-\frac{1}{2}}\int_0^t\e^{-\frac{1}{8\gamma}(t-\tau)} \left\|\Lambda^\beta(u(\tau)\otimes b(\tau))\right\|_{L^2}\dd\tau.
	\end{split}
\end{equation*}
	Applying \eqref{mul} and Lemma \ref{gag} yields
	\begin{equation*}
	\begin{split}
		&\left\|\Lambda^\beta(u(\tau)\otimes b(\tau))\right\|_{L^2} \\
		\leq& C\|\Lambda^\beta u(\tau)\|_{L^2}\|b(\tau)\|_{L^\infty}
		+C\|\Lambda^\beta b(\tau)\|_{L^{2}}\|u(\tau)\|_{L^{\infty}}\\ \leq& C\left(\|\Lambda^\beta u(\tau),\Lambda^\beta b(\tau)\|_{L^2}\|\nabla u(\tau),\nabla b(\tau)\|_{H^{m}}^{\frac{1}{1+m}}\|u(\tau),b(\tau)\|_{L^2}^\frac{m}{m+1}\right)
		\\ \leq& C\left(1+\gamma^{1+\frac{1}{c}-\frac{1-\beta}{2}}\right)\left(C_0\|u_0,b_0,a_0\|_{X^{m,c}(\R^2)}\right)^{1+\frac{m}{1+m}}\\&~~~~~~~~~~~~~~~~~~~~~~~~~~~~\times \|\nabla u(\tau),\nabla b(\tau)\|_{H^{m}}^{\frac{1}{1+m}}(1+\tau)^{\frac{1-\beta}{2}-\frac{1}{c}},	\end{split}
\end{equation*}
where   \eqref{high-1}  has been used in 	the last inequality. Now, we proceed with the estimates, 
\begin{align*}
	& \int_0^{t}e^{-\frac{1}{8\gamma}(t-\tau)}(1+\tau)^{\frac{1-\beta}{2}-\frac{1}{c}}\|\nabla u(\tau),\nabla b(\tau)\|_{H^{m}(\mathbb{R}^2)}^{\frac{1}{1+m}}d \tau\\
	\leq&  \left(\int_0^t\|\nabla u(\tau),\nabla b(\tau)\|_{H^{m}(\mathbb{R}^2)}^2d \tau\right)^{\frac{1}{2+2m}}\\&~~~~~~~~~~~~~~~~~~\times\left(\int_{0}^{t}\left(e^{-\frac{1}{8\gamma}(t-\tau)}(1+\tau)^{\frac{1-\beta}{2}-\frac{1}{c}}\right)^\frac{2+2m}{1+2m}d \tau\right)^{\frac{1+2m}{2+2m}}\\&\leq C(1+\gamma^{\frac{m}{2m+2}})^{\frac{1}{m+1}}\|(u_0,b_0,a_0)\|_{X^{m}(\R^2)}^{\frac{1}{m+1}}\\&~~~~~~~~~~~~~~~~~~~\times\left(\int_{0}^{t}\left(e^{-\frac{1}{8\gamma}(t-\tau)}(1+\tau)^{\frac{1-\beta}{2}-\frac{1}{c}}\right)^\frac{2+2m}{1+2m}d \tau\right)^{\frac{1+2m}{2+2m}}.
\end{align*}
	By 
	\eqref{p-2}, we have
	\begin{align}\nonumber
		&\int_{0}^{t}\left(\e^{-\frac{1}{8\gamma}(t-\tau)}(1+\tau)^{\frac{1-\beta}{2}-\frac{1}{c}}\right)^\frac{2+2m}{1+2m}\dd \tau \\\label{hm-exp-integral}
		&\leq \begin{cases}
			C(1+t)^{-1}\left(\gamma +\gamma^2\right), &(\frac{1}{c}-\frac{1}{2}+\frac{\beta}{2})\frac{2+2m}{1+2m}=1,\\
			C(1+t)^{(\frac{1-\beta}{2}-\frac{1}{c})\frac{2+2m}{1+2m}}	\gamma , &(\frac{1}{c}-\frac{1}{2}+\frac{\beta}{2})\frac{2+2m}{1+2m}<1,\\
			C(1+t)^{(\frac{1-\beta}{2}-\frac{1}{c})\frac{2+2m}{1+2m}}\left(\gamma +\gamma^{(\frac{1}{c}-\frac{1}{2}+\frac{\beta}{2})\frac{2+2m}{1+2m}}\right),&(\frac{1}{c}-\frac{1}{2}+\frac{\beta}{2})\frac{2+2m}{1+2m}>1.
		\end{cases}
	\end{align}
	We note that the lowest power of $\gamma$ appearing in the above expression is 1.  Collecting the above results, it
	yields that
\begin{align*}\nonumber
	&N_1\leq  C\left(C_0\|(u_0,b_0,a_0)\|_{X^{m,c}(\R^2)}\right)^2\left(1+\gamma^{1+\frac{1}{c}-\frac{1-\beta}{2}}\right)\phi(\gamma)(1+t)^{\frac{1-\beta}{2}-\frac{1}{c}},
\end{align*}
where \begin{align}\nonumber
	\phi(\gamma)&=\begin{cases}
		\left(\gamma^{\frac{m}{2+2m}} +\gamma^{\frac{2+3m}{2+2m}}\right), &(\frac{1}{c}-\frac{1}{2}+\frac{\beta}{2})\frac{2+2m}{1+2m}=1,\\
		\gamma^{\frac{m}{2+2m}} , &(\frac{1}{c}-\frac{1}{2}+\frac{\beta}{2})\frac{2+2m}{1+2m}<1,\\
		\left(\gamma^{\frac{m}{2+2m}} +\gamma^{\frac{1}{c}-1+\frac{\beta}{2}}\right),&(\frac{1}{c}-\frac{1}{2}+\frac{\beta}{2})\frac{2+2m}{1+2m}>1.
	\end{cases}\\\label{phi-estimate}
	&\leq C\left(1+\gamma^{\frac{3}{2}+\frac{1}{c}}+\gamma^{\frac{1}{2}+\frac{1}{c}+\frac{\beta}{2}}\right).
\end{align}
 By Lemma \ref{kernel prop},
	\begin{align*}
		N_2&\leq C\int_{0}^{\frac{t}{2}} \left\|\e^{-|\xi|^2(t-\tau)}\w{\Lambda^\beta\nabla\cdot\left(u\otimes b\right)}\right\|_{L^2(S_2)}\dd \tau+\int_{\frac{t}{2}}^{t} \left\|\e^{-|\xi|^2(t-\tau)}\w{\Lambda^\beta\nabla\cdot\left(u\otimes b\right)}\right\|_{L^2(S_2)}\dd \tau\\
		&\stackrel{\mathrm{def}}{=}	N_{21}+N_{22}.
	\end{align*}
	At this point, the estimate of $N_{21}$ and $N_{22}$ can be treated similarly to that of $H_1$ and $H_2$. In fact, it follows that
	\begin{align*}
	N_{2} &	\leq C\left(1+\gamma^{1+\frac{1}{c}-\frac{1-\beta}{2}}\right)\left(1+\gamma^{\frac{3}{2}+\frac{1}{c}}+\gamma^{\frac{1}{2}+\frac{1}{c}+\frac{\beta}{2}}\right)  \\
		&~~~~~~~~~~~~\times\left(C_0\|(u_0,b_0,a_0)\|_{X^{m,c}(\mr^2)}\right)^2 (1+t)^{\frac{1-\beta}{2}-\frac{1}{c}}.
	\end{align*}
By collecting the estimate of $N_{1}$ and $N_{2}$ together, we get
	\begin{equation}\label{h-b}
		\begin{split}
		N	&\leq C\left(1+\gamma^{1+\frac{1}{c}-\frac{1-\beta}{2}}\right)\left(1+\gamma^{2+\frac{1}{c}}+\gamma^{\frac{1}{2}+\frac{1}{c}+\frac{\beta}{2}}\right)  \left(C_0\|(u_0,b_0,a_0)\|_{X^{m,c}(\mr^2)}\right)^2 (1+t)^{\frac{1-\beta}{2}-\frac{1}{c}}.
	\end{split}
	\end{equation}
	Inserting \eqref{h-b-2}, \eqref{h-b-3} and \eqref{h-b} into \eqref{h-b-1} yields
	\begin{equation}\label{h-b-f}
	\begin{split}
		&\|\Lambda^\beta b(t)\|_{L^2}\\
		\leq& (c_1+c_2) \left(1+\gamma^{1-\frac{1-\beta}{2}+\frac{1}{c}}\right)\|(u_0,b_0,a_0)\|_{X^{m,c}(\mr^2)}(1+t)^{\frac{1-\beta}{2}-\frac{1}{c}}\\
		&+C\left(1+\gamma^{1+\frac{1}{c}-\frac{1-\beta}{2}}\right)\left(1+\gamma^{2+\frac{1}{c}}+\gamma^{\frac{1}{2}+\frac{1}{c}+\frac{\beta}{2}}\right)  \left(C_0\|(u_0,b_0,a_0)\|_{X^{m,c}(\mr^2)}\right)^2 (1+t)^{\frac{1-\beta}{2}-\frac{1}{c}},
	\end{split}
\end{equation}where  Young's inequality has been used to the polynomial of $\gamma$, which ensures us to bound it in terms of the lowest and highest powers of $\gamma$.

At last, by adding \eqref{u-f} and \eqref{h-b-f} together, one can get
	\begin{align*}
		&\|\Lambda^\beta u(t)\|_{L^2}+	\|\Lambda^\beta b(t)\|_{L^2}\\
		\leq &(c_0+c_1+c_2) \left(1+\gamma^{1-\frac{1-\beta}{2}+\frac{1}{c}}\right)\|(u_0,b_0,a_0)\|_{X^{m,c}(\mr^2)}(1+t)^{\frac{1-\beta}{2}-\frac{1}{c}}\\
		&~~+C\left(1+\gamma^{1+\frac{1}{c}-\frac{1-\beta}{2}}\right)\left(1+\gamma^{2+\frac{1}{c}}+\gamma^{\frac{1}{2}+\frac{1}{c}+\frac{m}{2}}\right)  \left(C_0\|(u_0,b_0,a_0)\|_{X^{m,c}(\mr^2)}\right)^2 (1+t)^{\frac{1-\beta}{2}-\frac{1}{c}},
	\end{align*}
	which implies \eqref{high-2} by selecting $C_0\geq 4\left(c_0+c_1+c_2\right)$ and a sufficiently small initial value  such that \begin{align*}
		CC_0\left\| (u_0,b_0,a_0)\right\|_{X^{m,c}(\mr^2)}\left(1+\gamma^{2+\frac{1}{c}}+\gamma^{\frac{1}{2}+\frac{1}{c}+\frac{m}{2}}\right) \leq \frac{1}{4}.
	\end{align*}
	Here, we assume that $\left\|(u_0,b_0,a_0)\right\|_{X^m(\mr^2)}\leq \varepsilon$, and \begin{align*}
		\varepsilon \leq  \dfrac{1}{4CC_0\left(1+\gamma^{2+\frac{1}{c}}+\gamma^{\frac{1}{2}+\frac{1}{c}+\frac{m}{2}}\right) }.
	\end{align*}
	
	{\bf Step 2.} 	We are ready to prove \eqref{th2-2}. Using a similar approach as in Step 1, we make the ansatz that, for $t<T$,
	\begin{equation}\label{5.1}
		\begin{split}
		&\|\Lambda^\varrho b(t)\|_{L^2}\leq C_0\left(1+\gamma^{1+\frac{1}{c}-\frac{1-\varrho}{2}}\right)\|(u_0,b_0,a_0)\|_{X^{m,c}(\mr^2)} (1+t)^{\frac{1-\varrho}{2}-\frac{1}{c}}.
\end{split}	\end{equation}
	Here $C_0\geq C> 0$  is a constant that will be specified later, where $C$ is the constant appearing on the right-hand side of \eqref{th1-1} in Theorem \ref{thm1}, independent of $\gamma$ and $0\leq \varrho<m+1$. 
	By the continuity argument (\cite{tao}), it remains to verify
	\begin{align}\label{5.2}
		&\|\Lambda^\varrho b(t)\|_{L^2}\leq \frac{C_0}{2}\left(1+\gamma^{1+\frac{1}{c}-\frac{1-\varrho}{2}}\right)\|(u_0,b_0,a_0)\|_{X^{m,c}(\mr^2)} (1+t)^{\frac{1-\varrho}{2}-\frac{1}{c}}.
	\end{align}
	
	Since \eqref{th2-1} with $0\leq \varrho\leq m$ has been proved  in Step 1,  we only need to consider the case when $m< \varrho<m+1$ in order to prove \eqref{th2-2}.
	
	Similar to \eqref{h-b-1},	for  $m<\varrho<m+1$,  we have
	\begin{equation}\label{bm+1}
		\begin{split}
		\|\Lambda^\varrho b(t)\|_{L^2}\leq& \left\|\left(K_0+\frac{1}{2}K_1\right)\Lambda^\varrho b_0\right\|_{L^2}+\gamma\left\|K_1\Lambda^\varrho a_0\right\|_{L^2}\\
		&+\left\|\int_{0}^{t}K_1(t-\tau)\widehat{\Lambda^\varrho \left(b\cdot\nabla u-u\cdot\nabla b\right)}\dd \tau\right\|_{L^2(S_1)}\\
		&+\left\|\int_{0}^tK_1(t-\tau)\widehat{\Lambda^\varrho \left(b\cdot\nabla u-u\cdot\nabla b\right)\dd \tau}\right\|_{L^2(S_2)}\\
		\stackrel{\mathrm{def}}{=}&O+P+Q_1+Q_2.
	\end{split}
	\end{equation}
 Using similar techniques to get \eqref{h-b-2}, we obtain
	\begin{align}\label{bm+1-1}
		O&\leq c_3\left(1+\gamma^{\frac{1}{c}-\frac{1-\varrho}{2}}\right)\|(u_0,b_0,a_0)\|_{X^{m,c}(\mr^2)}  (1+t)^{\frac{1-\varrho}{2}-\frac{1}{c}}.
	\end{align}
	For $P$,
	\begin{align*}
		P=\gamma\left\|K_1\Lambda^\varrho a_0\right\|_{L^2}&\leq \gamma \left\|\w{K_1\Lambda^\varrho a_0}\right\|_{L^2(S_1)}+\gamma \left\|\w{K_1\Lambda^\varrho a_0}\right\|_{L^2(S_2)}\\
		&\define 	P_1+P_2.
	\end{align*}
	Taking  $\vartheta=\varrho-m$ in \eqref{fren-2} and  uisng \eqref{basic} yield
	\begin{align*}
		P_1&\leq C \gamma \left\| \gamma^{-\frac{\varrho-m}{2}}|\xi|^{-(\varrho-m)}\e^{-\frac{1}{8\gamma}t} |\xi|^{\varrho}\w{a_0}\right\|_{L^2}\\
		&\leq C\gamma^{1-\frac{\varrho-m}{2}}\e^{-\frac{1}{8\gamma}t}\|\Lambda^{m}a_0\|_{L^2}\\
		&\leq C\gamma^{\frac{1}{2}+\frac{1}{c}+\frac{m}{2}}t^{\frac{1-\varrho}{2}-\frac{1}{c}}\left\|a_0\right\|_{H^m}.
	\end{align*}
For $P_2$, it follows that
	\begin{align*}
		P_2&\leq C\gamma \left\| \e^{-|\xi|^2t} |\xi|^\varrho \w{a_0}\right\|_{L^2(S_2)}\\
		&\leq C\gamma \left\||\xi|^{\varrho}\e^{-|\xi|^2t}\right\|_{L^{\frac{2c}{2-c}}(\mr^2)}\left\|\w{a_0}\right\|_{L^{\frac{c}{c-1}}(\mr^2)}\\
		&\leq C\gamma t^{-\frac{\varrho}{2}+\frac{c-2}{2c}}\|a_0\|_{L^c}.
	\end{align*}
	Then,
	\begin{align}\label{bm+1-2}
		P\leq c_4 \left(\gamma+\gamma^{\frac{1}{2}+\frac{1}{c}+\frac{m}{2}}\right) \|(u_0,b_0,a_0)\|_{X^{m,c}(\mr^2)} (1+t)^{\frac{1-\varrho}{2}-\frac{1}{c}}.
	\end{align}
	
As	for $Q_1$, using \eqref{fren-2} with $\vartheta=1$ firstly, we get
	\begin{equation*}\begin{split}
		Q_1
		&\leq C\gamma^{-\frac{1}{2}}\int_0^t\e^{-\frac{1}{8\gamma}(t-\tau)} \left\|\Lambda^\varrho(u(\tau)\otimes b(\tau))\right\|_{L^2}\dd\tau \\
		&\leq C\gamma^{-\frac{1}{2}}\int_0^{t}\e^{-\frac{1}{8\gamma}(t-\tau)}\left(\|\Lambda^\varrho u(\tau)\|_{L^{i_1}}\|b(\tau)\|_{L^{i_2}}+\|\Lambda^\varrho b(\tau)\|_{L^{2}}\|u(\tau)\|_{L^{\infty}}\right)\dd \tau\\&\define  Q_{11}+Q_{12},
\end{split}	\end{equation*}where 	$\frac{1}{i_1}+\frac{1}{i_2}=\frac{1}{2}.$
$Q_{12}$ can be estimated in a very similar way as $N_1$.  Indeed,  we obtain
\begin{align*}\nonumber
	&Q_{12}\leq  C\left(C_0\|(u_0,b_0,a_0)\|_{X^{m,c}(\R^2)}\right)^2\left(1+\gamma^{1+\frac{1}{c}-\frac{1-\varrho}{2}}\right)\phi_1(\gamma)(1+t)^{\frac{1-\varrho}{2}-\frac{1}{c}},
\end{align*}where $\phi_1(\gamma)$ is a polynomial function with nonnegative exponents of $\gamma.$

Now, we do estimate for $Q_{12}.$
Using the Gagliardo-Nirenberg interpolation inequality and \eqref{th2-1} with $\varrho=0$, we get
\begin{equation*}\begin{split}
	&\|\Lambda^\varrho u(\tau)\|_{L^{i_1}}\\ \leq& C\left\| \Lambda^{m+1}u(\tau)\right\|_{L^2}^{\theta_1}\left\| u(\tau)\right\|_{L^2}^{1-\theta_1}\\ \leq&C\left\| \Lambda^{m+1}u(\tau)\right\|_{L^2}^{\theta_1}\left(C_0\|(u_0,b_0,a_0)\|_{X^{m,c}(\mr^2)}  \right)^{1-\theta_1}\\ &~~~~~~~~~~~~~~~~~~~~~~~~\times\left(1+\gamma^{\frac{1}{2}+\frac{1}{c}}\right)^{1-\theta_1}(1+\tau)^{\left(\frac{1}{2}-\frac{1}{c}\right)\left(1-\theta_1\right)},
\end{split}
\end{equation*}
where $\varrho-\frac{2}{i_1}=m\theta_1-(1-\theta_1).$
Since $1-\frac{2}{i_2}<m+1,$	by the assumption \eqref{5.1} and \eqref{th2-1}, we have
\begin{align*}
	\|b(\tau)\|_{L^{i_2}}\leq  \|\Lambda^{1-\frac{2}{i_2}}b (\tau)\|_{L^2}\leq C_0\left(1+\gamma^{1+\frac{1}{c}-\frac{1}{i_2}}\right)\|(u_0,b_0,a_0)\|_{X^{m,c}(\mr^2)}\left(1+\tau\right)^{\frac{1}{i_2}-\frac{1}{c}}.
\end{align*}
It follows that $$\frac{1}{i_2}-\frac{1}{c}+\left(\frac{1}{2}-\frac{1}{c}\right)\left(1-\theta_1\right)=\frac{1-\varrho}{2}-\frac{1}{c}$$ by choosing $\theta_1=\dfrac{\frac{1}{c}}{\frac{m}{2}+\frac{1}{c}}$.
Therefore, 	\begin{equation*}\begin{split}
		&\|\Lambda^\varrho u(\tau)\|_{L^{i_1}}\|b(\tau)\|_{L^{i_2}}\\ \leq&
	 C\left(1+\gamma^{1+\frac{1}{c}-\frac{1}{i_2}}\right)\left(1+\gamma^{\frac{1}{2}+\frac{1}{c}}\right)^{1-\theta_1}\left(C_0\|(u_0,b_0,a_0)\|_{X^{m,c}(\mr^2)}  \right)^{2-\theta_1} \\
		&~~~~~~ ~~~~~~~~~~\times(1+\tau)^{\frac{1-\varrho}{2}-\frac{1}{c}} \left\| \Lambda^{m+1}u\right\|_{L^2}^{\theta_1}.
\end{split}	\end{equation*}
By  plugging this estimate  into $Q_{11}$ and using a similar way to estimate $N_1,$ we are able to get 
\begin{align*}\nonumber
	&Q_{11}\leq  C\left(C_0\|(u_0,b_0,a_0)\|_{X^{m,c}(\R^2)}\right)^2\left(1+\gamma^{1+\frac{1}{c}-\frac{1-\varrho}{2}}\right)\phi_2(\gamma)(1+t)^{\frac{1-\varrho}{2}-\frac{1}{c}},
\end{align*}where $\phi_2(\gamma)$ is a polynomial function with nonnegative exponents of $\gamma.$
To conclude, \begin{align}\label{bm+1-3}
	&Q_{1}\leq  C\left(C_0\|(u_0,b_0,a_0)\|_{X^{m,c}(\R^2)}\right)^2\left(1+\gamma^{1+\frac{1}{c}-\frac{1-\varrho}{2}}\right)\left(\phi_1(\gamma)+\phi_2(\gamma)\right)(1+t)^{\frac{1-\varrho}{2}-\frac{1}{c}}.
\end{align}	
Similarly \eqref{phi-estimate}, we have
$$\phi_1(\gamma)+\phi_2(\gamma)\leq C\left(1+\gamma^{3+\frac{2}{c}}+\gamma^{2+\frac{3}{c}+\frac{m}{2}}\right).$$
	
	For $Q_2$,
	\begin{equation*}
		\begin{split}
	Q_2&\leq \int_{0}^t \left\|\e^{-|\xi|^2(t-\tau)}\w{\Lambda^\varrho \left(b\cdot\nabla u-u\cdot\nabla b\right)}\right\|_{L^2(S_2)}\dd\tau\\
		&\leq \int_{0}^\frac{t}{2} \left\|\e^{-|\xi|^2(t-\tau)}\w{\Lambda^\varrho \left(b\cdot\nabla u-u\cdot\nabla b\right)}\right\|_{L^2(S_2)}\dd\tau\\
		&~~~~~~~~+\int_{\frac{t}{2}}^t \left\|\e^{-|\xi|^2(t-\tau)}\w{\Lambda^\varrho \left(b\cdot\nabla u-u\cdot\nabla b\right)}\right\|_{L^2(S_2)}\dd\tau\\
		&\stackrel{\mathrm{def}}{=}Q_{21}+Q_{22}.
	\end{split}
	\end{equation*}
	Applying Lemma \ref{heatkernel} yields 
	\begin{align*}
		Q_{21}&= \int_{0}^\frac{t}{2} \left\|\e^{-|\xi|^2(t-\tau)}\w{\Lambda^\varrho \left(b\cdot\nabla u-u\cdot\nabla b\right)}\right\|_{L^2(S_2)}\dd\tau\\
		&\leq C \int_0^\frac{t}{2}(t-\tau)^{-1-\frac{\varrho}{2}}\left\| u(\tau)\otimes b(\tau)\right\|_{L^1}\dd\tau\\
		&\leq C t^{-1-\frac{\varrho}{2}}\int_0^\frac{t}{2} \|u(\tau),b(\tau)\|_{L^2}^2\dd \tau.
	\end{align*}
	By analyzing the integrand, we get
	\begin{align*}
		&\|u(\tau),b(\tau)\|_{L^2}^2\\
		\leq &\|u(\tau),b(\tau)\|_{L^2}\|u(\tau),b(\tau)\|_{H^m}\\
		\leq & \left(C_0\|(u_0,b_0,a_0)\|_{X^{m,c}(\mr^2)}\right)^2\left(1+\gamma^{\frac{1}{2}+\frac{1}{c}}\right)\left(1+\gamma^{\frac{m}{2m+2}}\right)\tau^{\frac{1}{2}-\frac{1}{c}},
	\end{align*}
	where \eqref{th2-1} and \eqref{th1-1} have been used in the last inequality.
Hence, 
	\begin{align*}
		Q_{21}	&\leq Ct^{-1-\frac{\varrho}{2}}\left(C_0\|(u_0,b_0,a_0)\|_{X^{m,c}(\mr^2)}\right)^2\left(1+\gamma^{\frac{1}{2}+\frac{1}{c}}\right)\left(1+\gamma^{\frac{m}{2m+2}}\right)\int_{0}^{\frac{t}{2}}\tau^{\frac{1}{2}-\frac{1}{c}}\dd\tau\\
		&\leq C\left(1+\gamma^{\frac12+\frac{1}{c}+\frac{m}{2m+2}}\right)\left(C_0\|(u_0,b_0,a_0)\|_{X^{m,c}(\mr^2)}\right)^2 t^{\frac{1-\varrho}{2}-\frac{1}{c}}\\
		&\leq C\left(1+\gamma^{1+\frac{1}{c}-\frac{1-\varrho}{2}}\right)\left(C_0\|(u_0,b_0,a_0)\|_{X^{m,c}(\mr^2)}\right)^2 (1+t)^{\frac{1-\varrho}{2}-\frac{1}{c}}.
	\end{align*}

	Comparatively speaking, the estimation of $Q_{22}$ is more challenging. To begin with, by  utilizing Lemma \ref{young+KDf} and \eqref{mul}, we have
	\begin{align*}
		Q_{22}&=\int_{\frac{t}{2}}^t \left\|\e^{-|\xi|^2(t-\tau)}\w{\Lambda^\varrho \left(b\cdot\nabla u-u\cdot\nabla b\right)}\right\|_{L^2(S_2)}\dd\tau\\
		&\leq \int_{\frac{t}{2}}^t (t-\tau)^{-\frac{\varrho-m}{2}-\frac{1}{k}} \left\|\Lambda^{m-1} \left(b\cdot\nabla u-u\cdot\nabla b\right)\right\|_{L^k(\mr^2)}\dd\tau\\
		&\leq \int_{\frac{t}{2}}^t (t-\tau)^{-\frac{\varrho-m}{2}-\frac{1}{k}} \left(\left\|\Lambda^{m} u(\tau)\right\|_{L^2}\|b(\tau)\|_{L^\frac{2k}{2-k}}+\left\|\Lambda^{m} b(\tau)\right\|_{L^{r_1}}\|u(\tau)\|_{L^{r_2}}\right)\dd\tau,
	\end{align*}
	where 
	\begin{equation*}
		1-\frac{\varrho-m}{2}>\frac{1}{k}>\frac{1}{2},~~\frac{1}{r_1}+\frac{1}{r_2}=\frac{1}{k}.
	\end{equation*} 
The upper bound $1-\frac{\varrho-m}{2}$ of $\frac{1}{k}$ is selected to ensure  $$\int_{\frac{t}{2}}^t (t-\tau)^{-\frac{\varrho-m}{2}-\frac{1}{k}} \dd\tau<\infty.$$
Since $\frac{2k-2}{k} <1<m+1,$  by the assumption \eqref{5.1} or \eqref{th2-1}, we have
\begin{equation*}\begin{split}
\|b(\tau)\|_{L^\frac{2k}{2-k}}\leq&C\|\Lambda^\frac{2k-2}{k} b(\tau)\|_{L^2}\\ \leq&CC_0\|(u_0,b_0,a_0)\|_{X^{m,c}(\mr^2)}\left(1+\gamma^{\frac{3}{2}+\frac{1}{c}-\frac{1}{k}}\right)(1+\tau)^{\frac1k-\frac12-\frac1c}.
\end{split}\end{equation*}
As a result, by using \eqref{th2-1} with $\beta=m$, we get
	\begin{equation*}\begin{split}
		&\left\|\Lambda^{m} u(\tau)\right\|_{L^2}\|b(\tau)\|_{L^\frac{2k}{2-k}}\\ \leq&C\left(C_0\|(u_0,b_0,a_0)\|_{X^{m,c}(\mr^2)}\right)^2 \left(1+\gamma^{\frac{1}{2}+\frac{1}{c}+\frac{m}{2}}\right)\left(1+\gamma^{\frac{3}{2}+\frac{1}{c}-\frac{1}{k}}\right) (1+\tau)^{\frac{1-m}{2}-\frac{1}{c}+\frac{1}{k}-\frac12-\frac{1}{c}}\\ \leq&C\left(C_0\|(u_0,b_0,a_0)\|_{X^{m,c}(\mr^2)}\right)^2 \left(1+\gamma^{1+\frac{1}{c}-\frac{1-\varrho}{2}}\right)\left(1+\gamma^{\frac{3}{2}+\frac{1}{c}-\frac{1}{k}}\right) (1+\tau)^{\frac{1-m}{2}-\frac{1}{c}+\frac{1}{k}-1}.\end{split}
	\end{equation*}
 When the $m$-order  derivative is evaluated at $b,$ we choose \begin{equation*}
 	1-\frac{\varrho-m}{2}>\frac{1}{k}\geq\frac{1}{2}+\frac{1}{r_2}+\frac{m-\varrho}{2},
 \end{equation*}
 \begin{equation*}\begin{split}
r_2: \begin{cases}
2< r_2<\min\{\frac{2}{1-m},\frac{2}{\varrho-m}\}, &\text{if} ~~m<1,\\[2mm]
 2< r_2<\frac{2}{\varrho-m},&\text{if} ~~m>1.
\end{cases}\end{split}	\end{equation*}
Then, we  can obtain
\begin{equation*}\begin{split}
		&\left\|\Lambda^{m} b(\tau)\right\|_{L^{r_1}}\|u(\tau)\|_{L^{r_2}}\\
		\leq& C\left\|\Lambda^{m+1-\frac{2}{r_1}}b(\tau)\right\|_{L^2}\|\Lambda^{1-\frac{2}{r_2}}u(\tau)\|_{L^{2}}\\
		\leq& C\left(C_0\|(u_0,b_0,a_0)\|_{X^{m,c}(\mr^2)}\right)^2\left(1+\gamma^{\frac{1}{2}+\frac{1}{c}+\frac{m+1-\frac{2}{r_1}}{2}}\right)\\
		&~~~~~~~~~~~~~~~~~~~~\times(1+\tau)^{\frac{-m+\frac{2}{r_1}}{2}-\frac{1}{c}}\left(1+\gamma^{1-\frac{1}{r_2}+\frac{1}{c}}\right)(1+\tau)^{\frac{1}{r_2}-\frac{1}{c}}\\
		\leq& C\left(1+\gamma^{1+\frac{1}{c}-\frac{1-\varrho}{2}}\right)\left(1+\gamma^{1-\frac{1}{r_2}+\frac{1}{c}}\right)\left(C_0\|(u_0,b_0,a_0)\|_{X^{m,c}(\mr^2)}\right)^2(1+\tau)^{\frac{-m}{2}-\frac{1}{c}-\frac{1}{c}+\frac{1}{k}}\\
		\leq& C\left(1+\gamma^{1+\frac{1}{c}-\frac{1-\varrho}{2}}\right)\left(1+\gamma^{1-\frac{1}{r_2}+\frac{1}{c}}\right)\left(C_0\|(u_0,b_0,a_0)\|_{X^{m,c}(\mr^2)}\right)^2(1+\tau)^{\frac{1-m}{2}-\frac{1}{c}-1+\frac{1}{k}}.
	\end{split}	\end{equation*}
By plugging these estimates into $Q_{22},$ we obtain
	\begin{equation*}\begin{split}
		Q_{22}
		&\leq  C\left(1+\gamma^{1+\frac{1}{c}-\frac{1-\varrho}{2}}\right)\left(1+\gamma^{1-\frac{1}{r_2}+\frac{1}{c}}+\gamma^{\frac{3}{2}+\frac{1}{c}-\frac1k}\right)\left(C_0\|(u_0,b_0,a_0)\|_{X^{m,c}(\mr^2)}\right)^2\\
		&~~~~~~~~~~~~~~~~~~~~~~~~~~~~~~~~~~~~~~\times\int_{\frac{t}{2}}^t (t-\tau)^{-\frac{\varrho-m}{2}-\frac{1}{k}} (1+\tau)^{\frac{-1-m}{2}-\frac{1}{c}+\frac{1}{k}}\dd\tau\\
		&\leq  C\left(1+\gamma^{1+\frac{1}{c}-\frac{1-\varrho}{2}}\right)\left(1+\gamma^{1-\frac{1}{r_2}+\frac{1}{c}}+\gamma^{\frac{3}{2}+\frac{1}{c}-\frac1k}\right)\left(C_0\|(u_0,b_0,a_0)\|_{X^{m,c}(\mr^2)}\right)^2\\
		&~~~~~~~~~~~~~~~~~~~~~~~~~~~~~~~~~~~~~~\times (1+t)^{\frac{-1-m}{2}-\frac{1}{c}+\frac{1}{k}} t^{1-\frac{\varrho-m}{2}-\frac{1}{k}}\\
		&\leq C\left(1+\gamma^{1+\frac{1}{c}-\frac{1-\varrho}{2}}\right)\left(1+\gamma^{1-\frac{1}{r_2}+\frac{1}{c}}+\gamma^{\frac{3}{2}+\frac{1}{c}-\frac1k}\right)\\&~~~~~~~~~~~~~~~~\times\left(C_0\|(u_0,b_0,a_0)\|_{X^{m,c}(\mr^2)}\right)^2 (1+t)^{\frac{1-\varrho}{2}-\frac{1}{c}}.
	\end{split}	\end{equation*}
Thus,	collecting the  estimates  of $Q_{21}$ and $Q_{22}$ yields
	\begin{equation}\label{bm+1-4}\begin{split}
		Q_2&=\left\|\int_{0}^tK_1(t-\tau)\Lambda^\varrho \left(b\cdot\nabla u-u\cdot\nabla b\right)\dd \tau\right\|_{L^2(S_2)}\\
		&\leq C\left(1+\gamma^{1+\frac{1}{c}-\frac{1-\varrho}{2}}\right)\left(1+\gamma^{1-\frac{1}{r_2}+\frac{1}{c}}+\gamma^{\frac{3}{2}+\frac{1}{c}-\frac1k}\right)\\&~~~~~~~~~~~~~~~~\times\left(C_0\|(u_0,b_0,a_0)\|_{X^{m,c}(\mr^2)}\right)^2 (1+t)^{\frac{1-\varrho}{2}-\frac{1}{c}}.
\end{split}	\end{equation}
	
Inserting \eqref{bm+1-1}-\eqref{bm+1-4}	into \eqref{bm+1}, and applying Young's inequality to the polynomial in $\gamma$, it yields that
	\begin{align*}
		&\left\| \Lambda^\varrho b(t)\right\|_{L^2}\\
		\leq& \left(c_3+c_4\right)\left(1+\gamma^{1+\frac{1}{c}-\frac{1-\varrho}{2}}\right)\|(u_0,b_0,a_0)\|_{X^{m,c}(\mr^2)} (1+t)^{\frac{1-\varrho}{2}-\frac{1}{c}}\\&+C\left(1+\gamma^{1+\frac{1}{c}-\frac{1-\varrho}{2}}\right)\left(1+\gamma^{3+\frac{2}{c}}+\gamma^{2+\frac{3}{c}+\frac{m}{2}}\right)\left(C_0\|(u_0,b_0,a_0)\|_{X^{m,c}(\mr^2)} \right)^2(1+t)^{\frac{1-\varrho}{2}-\frac{1}{c}}.
	\end{align*}
 By selecting $C_0\geq 4\left(c_3+c_4\right)$ and a sufficiently  small initial data 
	 $$\left\|(u_0,b_0,a_0)\right\|_{X^{m,c}(\mr^2)}\leq \varepsilon,$$ with \begin{align*}
		\varepsilon \leq  \dfrac{1}{4CC_0\left(1+\gamma^{3+\frac{2}{c}}+\gamma^{2+\frac{3}{c}+\frac{m}{2}}\right)},
	\end{align*}
we can derive \eqref{5.2}.

\end{proof}

\vspace{2mm}
{\bf Conflict of interest:} The authors declare that they have no conflict of interest.
\vspace{2mm}

\section*{Acknowledgments}
Yu was partially supported by the	National Natural Science Foundation of China (NNSFC) (No.11901040), Beijing Natural	Science Foundation (BNSF) (No.1204030) and Beijing Municipal Education Commission
(KM202011232020).

	\end{document}